\newcommand{\nc}{\newcommand}
\numberwithin{equation}{section}
\newtheorem{thm}{Theorem}[section]
\newtheorem{prop}[thm]{Proposition}
\newtheorem{lem}[thm]{Lemma}
\newtheorem{cor}[thm]{Corollary}
\theoremstyle{remark}
\newtheorem{rem}[thm]{Remark}
\newtheorem{dfn}[thm]{Definition}
\nc{\fB}{\mathfrak{B}}
\nc{\gl}{\mathfrak{gl}}
\nc{\GL}{\mathfrak{GL}}
\nc{\g}{\mathfrak{g}}
\nc{\gh}{\widehat\g}
\nc{\h}{\mathfrak{h}}
\nc{\wfh}{\widehat{\mathfrak{h}}}
\nc{\la}{\lambda}
\nc{\al}{\alpha }
\nc{\be}{\beta }
\nc{\ve}{\varepsilon }
\nc{\om}{\omega }
\nc{\ta}{\theta}
\nc{\ch}{{\mathop {\rm ch}}}
\nc{\Tr}{{\mathop {\rm Tr}\,}}
\nc{\Id}{{\mathop {\rm Id}}}
\nc{\ad}{{\mathop {\rm ad}}}
\nc{\bra}{\langle}
\nc{\ket}{\rangle}
\nc{\bi}{{\bf i}}
\nc{\pa}{\partial}
\nc{\ld}{\ldots}
\nc{\cd}{\cdots}
\nc{\hk}{\hookrightarrow}
\nc{\T}{\otimes}
\nc{\gr}{\mathrm{gr}}
\nc{\ov}{\overline}
\nc{\cO}{\mathcal O}
\nc{\msl}{\mathfrak{sl}}
\nc{\mgl}{\mathfrak{gl}}
\nc{\U}{\mathrm U}
\nc{\V}{\EuScript V}
\nc{\cL}{\mathcal{L}}
\nc{\Res}{\mathrm{Res\ }}
\newcommand{\bC}{{\mathbb C}}
\newcommand{\bZ}{{\mathbb Z}}
\newcommand{\bP}{{\mathbb P}}
\newcommand{\bR}{{\mathbb R}}
\newcommand{\fh}{{\mathfrak h}}
\newcommand{\wh}{\widehat{\mathfrak h}}
\newcommand{\fg}{{\mathfrak g}}
\newcommand{\fb}{{\mathfrak b}}
\newcommand{\fn}{{\mathfrak n}}
\newcommand{\E}{\EuScript{E}}
\nc{\I}{\mathfrak I}
\nc{\bfI}{\mathbf I}
\nc{\Q}{\mathfrak Q}
\nc{\W}{\mathbb W}
\nc{\bU}{\mathbb U}
\nc{\Gm}{\mathbb{G}_{m}}
\nc{\bA}{\mathbb A}
\begin{document}

\title[Non-symmetric Macdonald polynomials at infinity]
{Representation theoretic realization of non-symmetric Macdonald polynomials at infinity}

\author{Evgeny Feigin}
\address{Evgeny Feigin:\newline
Department of Mathematics,\newline
National Research University Higher School of Economics,\newline
Usacheva str. 6, 119048, Moscow, Russia,\newline
{\it and }\newline
Skolkovo Institute of Science and Technology, Skolkovo Innovation Center, Building 3,
Moscow 143026, Russia
}
\email{evgfeig@gmail.com}

\author{Syu Kato}
\address{Syu Kato:\newline
Department of Mathematics, Kyoto University,
Oiwake Kita-Shirakawa Sakyo Kyoto 606--8502 JAPAN}
\email{E-mail:syuchan@math.kyoto-u.ac.jp}

\author{Ievgen Makedonskyi}
\address{Ievgen Makedonskyi:\newline
Max Planck Institute for Mathematics, Vivatgasse 7, 53111, Bonn, Germany
\newline
{\it and} \newline
Department of Mathematics,\newline
National Research University Higher School of Economics,\newline
Usacheva str. 6, 119048, Moscow, Russia
}
\email{makedonskii\_e@mail.ru}

\begin{abstract}
We study the nonsymmetric Macdonald polynomials specialized at infinity from various points of view.
First, we define a family of modules of the Iwahori algebra whose characters are equal to the
nonsymmetric Macdonald polynomials specialized at infinity. Second, we show that these modules are
isomorphic to the dual spaces of sections of certain sheaves on the semi-infinite Schubert varieties.
Third, we prove that the global versions of these modules are homologically dual to the level one
affine Demazure modules.
\end{abstract}

\maketitle

\section{Introduction}
Nonsymmetric Macdonald polynomials $E_\la(x,q,t)$ form a remarkable class of special functions (see
\cite{O,M3,Ch1,Ch2}).
They depend on a weight of a simple Lie algebra $\fg$ and variables $x=(x_1,\dots,x_n)$, $q$ and $t$.
Each $E_\la(x,q,t)$ is a polynomial in $x$-variables with coefficients being rational functions in $q$ and $t$.
The importance of the nonsymmetric Macdonald polynomials comes from numerous applications in combinatorics,
algebraic geometry and
representation theory. In particular, it has been shown in \cite{S,I} that the characters of the affine level
one
Demazure modules for the corresponding affine Kac-Moody Lie algebra are equal to the $t=0$ specializations
$E_\la(x,q,0)$.

It has been demonstrated recently that the $t=\infty$ specialization of the nonsymmetric Macdonald polynomials
is very meaningful as well (see \cite{CO1,CO2,OS,Kat,FeMa3,FeMa4,NS,NNS}). The study of the "opposite" $t=\infty$
specialization
has lead to various discoveries of representation theoretic, combinatorial and geometric nature. However, all
the
representation theoretic descriptions of $E_\la(x,q,\infty)$ obtained so far are dealing only with the nonsymmetric
Macdonald polynomials
corresponding to the anti-dominant weight $\la$ (recall that the Sanderson and the Ion theorems work for
arbitrary $\la$). The goal of this
paper is to fill this gap and to present the representation theoretic realization of $E_\la(x,q,\infty)$ for all weights.

Our starting point is a result from \cite{Kat} stating that there exists a geometric realization of all the
nonsymmetric Macdonald
polynomials at $t=\infty$. More precisely, it has been proved that for any dominant weight $\la$ and an element
$w\in W$
there exists a sheaf $\E_{w}(\la)$ on the semi-infinite Schubert variety $\Q(w)$ (see e.g. \cite{BF1,BF2,Kat,KNS}) such that
the character of the dual space of sections of $\E_{w}(\la)$ is equal (up to a simple factor) to the nonsymmetric
Macdonald polynomial
$E_{-w\la}(x^{-1},q^{-1},\infty)$ (see Section \ref{SoSIS} for more details). Moreover, this space is naturally
endowed with the structure
of a cyclic module over the Iwahori algebra. Our first main result is an explicit description of the
corresponding module of
the Iwahori. Namely, we put forward the following definition:
\begin{dfn}
Let $\la_-$ be an anti-dominant weight and let $\sigma$ be an element of the Weyl group of $\fg$.
The module $U_{\sigma(\la_-)}$ is the cyclic Iwahori module with cyclic vector $u_{\sigma(\la_-)}$ of $\fh$
weight $\sigma(\la_-)$
subject to the relations:
\begin{gather*}
\fh\T z\bC[z] u_{\sigma(\la_-)}=0,\\
\widehat{\sigma} (f_{-\al}\T z) u_{\sigma(\la_-)}=0,\ \al\in\Delta_+,\\
 (f_{\sigma(\al)}\T z)^{-\bra \la_-,\al^\vee\ket+1} u_{\sigma(\la_-)}=0,\ \al\in\Delta_+,
 \sigma\al\in\Delta_-,\\
(e_{\sigma(\al)}\T 1)^{-\bra \la_-,\al^\vee\ket} u_{\sigma(\la_-)}=0,\ \al\in\Delta_+, \sigma\al\in\Delta_+,
\end{gather*}
\end{dfn}
\noindent where the definition of the $\widehat\sigma$-action is given in \S \ref{curalg}.
The global version $\bU_{\sigma(\la_-)}$ is defined by the same set of relations with the first line omitted.
We prove the following theorem:
\begin{thm}\label{Th1}
For an anti-dominant weight $\la_-$ and $\sigma\in W$ one has
\[E_{\sigma(\la_-)}(x,q^{-1},\infty)=w_0\ch \, U_{w_0\sigma(\la_-)}.\]
\end{thm}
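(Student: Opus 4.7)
The plan is to identify the module $U_{w_0\sigma(\la_-)}$ with the dual space of sections $H^0(\Q(w_0\sigma), \E_{w_0\sigma}(-\la_-))^*$ from \cite{Kat}, and then to read off the theorem from the character formula recalled in the introduction. With $\la := -\la_-$ (dominant since $\la_-$ is anti-dominant) and $w := w_0\sigma$, Kato's result gives the character of this space as $E_{w_0\sigma(\la_-)}(x^{-1},q^{-1},\infty)$, up to a simple prefactor; a bookkeeping manipulation using the $w_0$-action on the variables then converts this into $w_0 \ch(-)$ of the Macdonald polynomial indexed by $\sigma(\la_-)$, which is exactly the shape of the right-hand side.

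The identification itself has two halves. First, I would produce a surjection $U_{w_0\sigma(\la_-)} \twoheadrightarrow H^0(\Q(w_0\sigma),\E_{w_0\sigma}(-\la_-))^*$ by locating a cyclic Iwahori-vector of $\h$-weight $w_0\sigma(\la_-)$ in the geometric side, namely the evaluation functional at the distinguished $T$-fixed point, and checking that it satisfies the four families of relations defining $U$ (with $\sigma$ replaced by $w_0\sigma$). The vanishing of $\h \otimes z\bC[z]$ reflects the local triviality of $\E_{w_0\sigma}(-\la_-)$ at the fixed point; the $\widehat\sigma$-twisted annihilation of $f_{-\al}\otimes z$ encodes Kato's Frobenius-splitting/normality statements for $\Q(w_0\sigma)$; and the final two Serre-type relations are the standard integrability of sections of a line bundle under the unipotent root subgroups, with the sign convention dictated by whether $\sigma\al$ lies in $\Delta_+$ or $\Delta_-$.

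The main obstacle is the reverse direction: showing that the listed relations are already sufficient, so that the surjection above is an isomorphism. The cleanest route I see is an intertwiner/Demazure-style recursion on $\sigma$, in the spirit of the level-one Demazure theory underlying \cite{S,I} and its $t=\infty$ counterparts in \cite{OS,Kat,FeMa3}: both $U_{w_0\sigma(\la_-)}$ and its geometric counterpart should transform the same way when $\sigma$ is multiplied by a simple reflection, reducing the theorem to the anti-dominant base case $\sigma = e$, which is already handled by earlier work. Should such a recursion prove elusive, the fallback is a direct PBW-style spanning argument: the first two families of relations restrict one to monomials in the currents attached to the $\sigma$-twisted root system, and the two Serre-type relations then cap the exponents, yielding an upper bound on $\ch U_{w_0\sigma(\la_-)}$ that matches the geometric character and forces equality.
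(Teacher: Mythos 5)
Your plan conflates the local and global modules, and this is where it breaks. The dual space of sections $H^0(\Q(w),\E_{w}(\la))^*$ is \emph{not} $U_{w\la}$ but the global module $\mathbb U_{w\la}$ (this is Theorem \ref{Th2} of the paper), and its character is $(q)_{(\la_-)_\sigma}^{-1}\cdot E_{\sigma(\la_-)}(x,q^{-1},\infty)$ up to the $w_0$-twist, not the bare specialized Macdonald polynomial: already the $\fh$-weight-$\sigma(\la_-)$ subspace of the sections module is an infinite-dimensional graded space (Proposition \ref{f-factor}), whereas in $U_{\sigma(\la_-)}$ the relation $\fh\T z\bC[z]\,u=0$ makes it one-dimensional. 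So even granting the geometric identification, the prefactor you dismiss as ``bookkeeping'' is the whole issue: to pass from $\ch\,\mathbb U_{\sigma(\la_-)}$ to $\ch\,U_{\sigma(\la_-)}$ you need the freeness of the $A_{(\la_-)_\sigma}$-action on $\mathbb U_{\sigma(\la_-)}$ (Corollary \ref{lambdasigma}); the graded Nakayama argument (Corollary \ref{U-free}) only gives the inequality $\ch\,\mathbb U_{\sigma(\la_-)}\le (q)_{(\la_-)_\sigma}^{-1}\ch\,U_{\sigma(\la_-)}$, i.e.\ a lower bound on $\ch\,U$, and in the paper the freeness is \emph{deduced from} the local character formula you are trying to prove. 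Your fallback ``PBW spanning argument capping exponents'' would have to supply the missing upper bound and identify it with $E_{\sigma(\la_-)}(x,q^{-1},\infty)$; that is precisely the hard combinatorial content, and naive exponent-capping does not produce it. (There is also a sign slip: with $\la=-\la_-$ and $w=w_0\sigma$, the cyclic vector of $H^0(\Q(w),\E_w(\la))^*$ has weight $-w_0\sigma(\la_-)$, not $w_0\sigma(\la_-)$.)

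For comparison, the paper's proof of Theorem \ref{Th1} is purely algebraic and combinatorial and does not pass through the geometry at all: one shows $U_{\sigma(\la_-)}\simeq W_{\sigma(\la_-)}(\ell(w_0)-\ell(\sigma))$, a generalized Weyl module with characteristic, by checking that the coroots $-\bar\beta_1,\dots,-\bar\beta_r$ attached to a suitable reduced decomposition of $t_{\la_-}$ are exactly $\Delta_+\cap\sigma^{-1}\Delta_+$ (Theorem \ref{UWchar}); the decomposition procedure (Theorem \ref{kernel}, Theorem \ref{local}, Corollary \ref{combmodel}) then computes $\ch\,W_{\sigma(\la_-)}(r)$ as a sum over quantum-alcove paths, which is matched with the Orr--Shimozono formula \eqref{OS}. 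The geometric statement (your starting point) is Theorem \ref{Th2}/Corollary \ref{cor}, which in the paper is established \emph{after} and partly \emph{by means of} Theorem \ref{Th1}, so running the implication in your direction without an independent proof of the freeness (or of the isomorphism $U_{w\la}\cong W_{w\la}(r)$) is circular. Your Demazure-operator recursion on $\sigma$ is also problematic for the local modules: the relation $\fh\T z\bC[z]\,u=0$ is not preserved in any simple way, which is why the paper applies the functors $\mathcal D_i$ only to the global modules $\mathbb U_{-\la}$ and to Demazure modules in Section \ref{cat}.
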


The above $U$-modules also give the spaces of sections of the sheaves $\E_{w}(\la)$ on a Schubert manifold $\Q ( w )$. More precisely, we prove
the following theorem.
\begin{thm}\label{Th2}
For a dominant weight $\la$ and $w\in W$ one has an isomorphism of the Iwahori modules
\[
H^0(\Q(w),\E_{w}(\la))^*\simeq	{\mathbb U}_{w(\la)}.
\]
\end{thm}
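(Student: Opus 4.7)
The plan is to construct a surjection $\bU_{w(\la)}\twoheadrightarrow H^0(\Q(w),\E_w(\la))^*$ using the universal property of the left-hand side, and then upgrade it to an isomorphism by a character comparison based on Theorem \ref{Th1} and the main result of \cite{Kat}.

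First I would locate a distinguished cyclic vector on the geometric side. By \cite{Kat}, $H^0(\Q(w),\E_w(\la))^*$ is a cyclic Iwahori module, and the natural generator is the functional $\xi_w$ dual to the section of $\E_w(\la)$ that does not vanish at the $T$-fixed point corresponding to $w\in\Q(w)$. A local computation in a neighbourhood of that point shows that $\xi_w$ has $\fh$-weight $w(\la)$, matching the cyclic generator of $\bU_{w(\la)}$ under the parameter identification $\sigma=ww_0$, $\la_-=w_0(\la)$.

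The technical core is verifying that $\xi_w$ satisfies each of the three families of defining relations of $\bU_{w(\la)}$. The twisted relations $\widehat{\sigma}(f_{-\al}\T z)\xi_w=0$ should follow from the infinitesimal stabilizer of the base point $w$ inside the Iwahori action on $\Q(w)$ in the semi-infinite setting of \cite{Kat,KNS}. The power relations $(f_{\sigma(\al)}\T z)^{N+1}\xi_w=0$ and $(e_{\sigma(\al)}\T 1)^{N}\xi_w=0$, where $N=-\bra\la_-,\al^\vee\ket$, are Serre-type identities that I would establish root by root: restrict $\E_w(\la)$ to the appropriate $\msl_2$-subvariety through $w$ and reduce to standard line-bundle computations on rational curves and on finite-type Demazure subvarieties. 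The dichotomy between the $f_{\sigma(\al)}\T z$ relation and the $e_{\sigma(\al)}\T 1$ relation matches precisely the dichotomy $\sigma\al\in\Delta_-$ versus $\sigma\al\in\Delta_+$, corresponding to whether the $\al$-direction at $w$ is ``semi-infinite'' (dropping to a lower stratum) or of finite type.

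Granted these relations, the universal property of $\bU_{w(\la)}$ produces the desired surjection. To promote it to an isomorphism I would compare graded characters: by \cite{Kat} the right-hand side has character equal (up to the explicit factor recorded in Section \ref{SoSIS}) to $E_{-w\la}(x^{-1},q^{-1},\infty)$, while Theorem \ref{Th1} applied to the local quotient $U_{w(\la)}=\bU_{w(\la)}/(\fh\T z\bC[z])\bU_{w(\la)}$ gives the same formula after accounting for the action of $\fh\T z\bC[z]$. The main obstacle I foresee lies in this last step: one must show that in $\bU_{w(\la)}$ the enveloping algebra $\U(\fh\T z\bC[z])$ acts freely on the cyclic vector, so that $\bU_{w(\la)}$ really is the ``free extension'' of its local quotient $U_{w(\la)}$ and no character is lost in the comparison. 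This should fall out of a PBW-type filtration argument exploiting the fact that none of the listed relations involves $\fh\T z\bC[z]$, but the bookkeeping needed to rule out unexpected vanishing of $\xi_w$ under $\U(\fh\T z\bC[z])$ is the most delicate part of the proof.
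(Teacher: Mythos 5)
Your overall strategy (surjection from $\bU_{w\la}$ by checking relations, then character comparison) is viable in principle, but the step you yourself flag as delicate is based on a false statement, and this is a genuine gap. The enveloping algebra $\U(\fh\T z\bC[z])$ does \emph{not} act freely on the cyclic vector of $\bU_{w\la}$: although the defining relations do not mention $\fh\T z\bC[z]$ explicitly, they propagate to it through the relevant $\msl_2[z]$-triples. For instance, combining $(f_{\sigma(\al_i)}\T z)^{-\bra\la_-,\al_i^\vee\ket+1}u=0$ with $e_{-\sigma(\al_i)}u=0$ (or $(e_{\sigma(\al_i)}\T 1)^{-\bra\la_-,\al_i^\vee\ket}u=0$ with $(f_{-\sigma(\al_i)}\T z)u=0$) forces polynomial relations among the elements $(\sigma(\al_i)^\vee\T z^k)u$. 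This is exactly the content of Proposition \ref{f-factor}: the weight-$w\la$ subspace of $\bU_{w\la}$ is the \emph{finite} polynomial ring $A_{(\la_-)_\sigma}$, with graded dimension $(q)^{-1}_{(\la_-)_\sigma}$, rather than a copy of $\U(\fh\T z\bC[z])$, whose graded dimension is $\prod_{j\ge 1}(1-q^j)^{-n}$. With your "free extension" picture the character of $\bU_{w\la}$ would come out strictly larger than the geometric side, and the comparison would fail. The correct replacement is Corollary \ref{lambdasigma}: $A_{(\la_-)_\sigma}$ acts freely and $\ch\,\bU_{w\la}=(q)^{-1}_{(\la_-)_\sigma}\,\ch\,U_{w\la}$, which matches the factor $(q)^{-1}_{\la_w}$ of \cite[Corollary 6.10]{Kat} via Remark \ref{><}. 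Crucially, this is not a PBW bookkeeping exercise: in the paper it is obtained by identifying $\bU_{w\la}$ with a global Weyl module with characteristic and running the full decomposition procedure (Theorem \ref{UWchar}, Theorem \ref{decompositionglobal}, Corollary \ref{U-free}), i.e.\ it is the bulk of the work behind Theorem \ref{Th2}, not an afterthought.

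Two further remarks on how the paper proceeds differently. For the surjection, the paper does not verify all three families of relations geometrically: it uses \eqref{GammaO}, $H^0(\Q(w),\cO_w(\la))^*\cong\W_{w\la}$, together with the inclusion $\E_w(\la)\subset\cO_w(\la)$, so only the extra relations $e_{\sigma(\al)}^{-\bra\la_-,\al^\vee\ket}v=0$ for $\al\in\Delta_+\cap\sigma^{-1}\Delta_+$ need to be checked; this is done on the Bott--Samelson-type space $\Q(\bi)$, restricting to the curves $\bP^1_j$ through $x_e$ and using the divisors $H_k$, plus an injectivity argument ($\kappa$) relying on the finite Demazure module inside $\W_{w\la}$ --- somewhat subtler than ``restrict to an $\msl_2$-subvariety through $x_w$''. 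For the isomorphism itself, the paper's proof of Theorem \ref{g-real} avoids characters altogether: it writes $H^0(\Q(\bi),\cO(\la-\sum_k H_k))=\bigcap_k H^0(\Q(\bi),\cO(\la-H_k))$, identifies $\ker\varphi_k^*\cong\W_{w[k]\la}$, and concludes from the subspace identity $\sum_k\W_{w[k]\la}=\sum_{\al\in\Delta_+\cap\sigma^{-1}\Delta_+}\W_{s_{\sigma(\al)}w\la}$ (Lemma \ref{max-repr}) together with $\bU_{w\la}=\W_{w\la}/\sum_\al\W_{s_{\sigma(\al)}w\la}$. A character comparison such as yours does work once your freeness claim is replaced by Corollary \ref{lambdasigma} (this is essentially how the paper derives the character identity \eqref{GammaE}), but as written your key lemma is false and its correct form requires the machinery you hoped to bypass.
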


For an antidominant weight $\mu$ we consider a series $(q)_\mu^{-1}\in \bC [\![q]\!]$ (see section
\ref{curalg}).
In view of \cite[Corollary 6.10]{Kat}, Theorem \ref{Th2} implies
\begin{cor}\label{cor}
For an anti-dominant weight $\la_-$ and $\sigma\in W$ one has
\[(q)^{-1}_{(\la_-)_\sigma}\cdot E_{\sigma(\la_-)}(x,q^{-1},\infty)= w_0\ch \, \mathbb U _{w_0\sigma(\la_-)},\]
where $(\la_-)_\sigma$ is defined by \eqref{la-sigma}.
\end{cor}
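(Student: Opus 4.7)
The plan is to read the corollary off by combining Theorem \ref{Th2} with the character formula of \cite[Corollary 6.10]{Kat}. Both ingredients are already in hand: Theorem \ref{Th2} identifies the Iwahori module $\mathbb U_{w(\la)}$ with $H^{0}(\Q(w),\E_{w}(\la))^{*}$, while \cite[Corollary 6.10]{Kat} evaluates the character of the latter space as a prefactor $(q)^{-1}_{(\la_-)_\sigma}$ times a nonsymmetric Macdonald polynomial at $t=\infty$. The corollary is then essentially a transcription.

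Concretely, I first apply the character functor to the isomorphism of Theorem \ref{Th2}, obtaining
\[
\ch \mathbb U_{w(\la)} \;=\; \ch H^{0}\bigl(\Q(w),\E_{w}(\la)\bigr)^{*}
\]
for any dominant $\la$ and $w\in W$. To hit the specific module appearing in the corollary, I specialize the parameters by setting $\la = w_0\la_-$ (which is dominant since $\la_-$ is antidominant) and $w = w_0\sigma w_0$, so that $w(\la)=w_0\sigma(\la_-)$. The left-hand side is then exactly $\ch \mathbb U_{w_0\sigma(\la_-)}$, and the right-hand side is rewritten via \cite[Corollary 6.10]{Kat} in the form $(q)^{-1}_{(\la_-)_\sigma}\cdot E_{-w\la}(x^{-1},q^{-1},\infty)$, where the prefactor is precisely the series $(q)^{-1}_{(\la_-)_\sigma}$ introduced in \eqref{la-sigma}.

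It then remains to recast $E_{-w\la}(x^{-1},q^{-1},\infty)$ in the variables of the corollary. Applying $w_0$ to both sides converts the $x^{-1}$ into $x$ and, under the same index bookkeeping that appears in Theorem \ref{Th1} (where the passage between $w_0\ch U_{w_0\sigma(\la_-)}$ and $E_{\sigma(\la_-)}(x,q^{-1},\infty)$ is already carried out), replaces the Macdonald index $-w\la$ by $\sigma(\la_-)$. This produces $(q)^{-1}_{(\la_-)_\sigma}\cdot E_{\sigma(\la_-)}(x,q^{-1},\infty)$ on one side and $w_0\ch\mathbb U_{w_0\sigma(\la_-)}$ on the other, which is the claimed identity. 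The only real step of substance is this notational matching between the conventions of \cite{Kat} and those used here; however, since the same translation is already valid for the non-global modules $U$ in Theorem \ref{Th1}, it transfers verbatim to the global versions $\mathbb U$ provided by Theorem \ref{Th2}.
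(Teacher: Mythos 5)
Your overall route is the same as the paper's: Corollary \ref{cor} is obtained there precisely by feeding \cite[Corollary 6.10]{Kat} into Theorem \ref{Th2}, with \cite[Lemma 5.2]{OS} and Remark \ref{><} handling the change of conventions (this is carried out in the corollary following Theorem \ref{g-real}). The problem is that the one step you call ``essentially a transcription''---the identification of the prefactor---is the only content of the corollary beyond Theorem \ref{Th1}, and as written it does not follow. With your choice $\la=w_0\la_-$, $w=w_0\sigma w_0$, \cite[Corollary 6.10]{Kat} produces the factor $(q)^{-1}_{\la_w}$ with $\la_w=\la-\sum_{j\colon w\al_j<0}\om_j$, and Remark \ref{><} (whose dictionary reads $(\la_+)_w=w_0(\la_-)_{ww_0}$, here $ww_0=w_0\sigma$) converts this into $(q)^{-1}_{(\la_-)_{w_0\sigma}}$, not $(q)^{-1}_{(\la_-)_{\sigma}}$; these series differ in general (already for $\msl_2$, $\la_-=-\om$, $\sigma=e$ one has $(q)_{(\la_-)_e}=1$ while $(q)_{(\la_-)_{w_0}}=1-q$). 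The same conclusion is forced by the paper's internal results: Theorem \ref{Th1} together with Corollary \ref{lambdasigma}, applied to the module $\mathbb U_{w_0\sigma(\la_-)}$ (so with Weyl group element $w_0\sigma$), gives $w_0\ch\,\mathbb U_{w_0\sigma(\la_-)}=(q)^{-1}_{(\la_-)_{w_0\sigma}}E_{\sigma(\la_-)}(x,q^{-1},\infty)$. So the subscript of the prefactor is attached to the element indexing the module, and your assertion that Kato's factor ``is precisely the series $(q)^{-1}_{(\la_-)_\sigma}$ introduced in \eqref{la-sigma}'' is exactly the bookkeeping that must be done, and under the paper's Definition \eqref{la-sigma} it comes out with the twisted subscript.

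Your closing argument---that the index matching ``transfers verbatim'' from Theorem \ref{Th1}---cannot close this gap, because Theorem \ref{Th1} contains no $(q)$-factor at all: it controls the Macdonald index and the $w_0$-twist of the variables, but says nothing about the subscript of the normalization. To repair the argument, either run the computation as the paper does, with $w=\sigma w_0$ and $\la=w_0\la_-$ (so $w\la=\sigma\la_-$ and Remark \ref{><} gives the factor $(q)^{-1}_{(\la_-)_\sigma}$ on the nose), obtaining $w_0\ch\,\mathbb U_{\sigma(\la_-)}=(q)^{-1}_{(\la_-)_\sigma}E_{w_0\sigma(\la_-)}(x,q^{-1},\infty)$, and then state explicitly how the subscript transforms when you relabel $\sigma\mapsto w_0\sigma$ to reach the shape of Corollary \ref{cor}; or, in your parametrization, record the identity $(\la_+)_{w_0\sigma w_0}=w_0(\la_-)_{w_0\sigma}$ and reconcile the resulting factor with the subscript printed in the statement. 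As it stands, the middle step of your proof is a genuine gap, and it hides precisely the point where the corollary's normalization is decided.
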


Our third  theorem describes the categorical nature of the global $U$-modules. Let ${\mathfrak B}$ be the
category
of the Iwahori modules (see section \ref{cat} for the precise definitions). Let $D_\mu$ be the level one affine
Demazure module whose cyclic vector has weight $\mu$. Thanks to \cite{S,I}, the character of
$D_\mu$
is given by the $t=0$ specialization of the nonsymmetric Macdonald polynomial $E_\mu$ whenever $\g$ is of type $ADE$. We note that $D_\mu$, as well as $U_\la$, are elements of ${\mathfrak B}$. We prove that the global $\U$-modules are "dual" in the categorical
sense to the Demazure modules. More precisely, the following theorem holds.

\begin{thm}\label{Th3}
Assume that $\g$ is of type $ADE_{6}E_{7}$. We have:
\[
\mathrm{Ext}^{i}_{\mathfrak B} ( \mathbb U_{- \la}, D_{\mu}^{*} ) \cong
\begin{cases} \bC & (i=0,\la=\mu)\\\{0\} & (\text{otherwise}).\end{cases}
\]
\end{thm}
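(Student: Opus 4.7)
The plan is to reduce the Ext computation to an Euler-characteristic identity involving characters of $\mathbb U_{-\la}$ and $D_\mu^*$, and then evaluate that identity using a Cherednik-type orthogonality between the $t=0$ and $t=\infty$ specializations of nonsymmetric Macdonald polynomials.

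First, I would form the graded Euler characteristic
\[
\mathrm{Eu}(\la,\mu) \;:=\; \sum_i (-1)^i \, \ch \mathrm{Ext}^i_{\mathfrak B}(\mathbb U_{-\la}, D_\mu^*)
\]
and realize it as a bilinear pairing on the Grothendieck group of $\mathfrak B$. The two inputs to this pairing are known explicitly: by Corollary \ref{cor} the character of $\mathbb U_{-\la}$ is a product of $(q)^{-1}_{(\la)_\sigma}$ with a specialization $E_\bullet(x,q^{-1},\infty)$, and by the Sanderson--Ion theorem $\ch D_\mu = E_\mu(x,q,0)$ in type $ADE$. The Cherednik inner product of $E_\la(x,q^{-1},\infty)$ against $E_\mu(x,q,0)$, read off as the appropriate constant term pairing on $\bC[P]\otimes \bC(\!(q)\!)$, is $\delta_{\la,\mu}$ (up to the normalising factor $(q)^{-1}_{(\la)_\sigma}$ which is precisely what is present in $\ch\mathbb U_{-\la}$). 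Thus $\mathrm{Eu}(\la,\mu)$ has exactly the expected value.

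Second, I would upgrade this character identity to a statement at the level of $\mathrm{Ext}^\bullet$ by showing that $\mathrm{Ext}^i_{\mathfrak B}(\mathbb U_{-\la}, D_\mu^*)=0$ for $i>0$. This is the main obstacle. Two natural strategies suggest themselves. One is purely homological: construct a resolution of $\mathbb U_{-\la}$ by Iwahori modules whose Hom into $D_\mu^*$ is concentrated in degree zero --- for instance, a BGG-type resolution built from $\bU_\nu$'s for $\nu$ in the $W$-orbit of $\la$, so that each term contributes only through its highest weight. The other is geometric: using Theorem \ref{Th2}, identify $\mathrm{Ext}^i_{\mathfrak B}(\mathbb U_{-\la}, D_\mu^*)$ with a sheaf-Ext group on $\Q(w)$ between $\E_w(\la)$ and the global-section realization of $D_\mu^*$, and invoke a Kempf-style vanishing theorem on the semi-infinite Schubert variety. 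The latter approach is the likely culprit for the hypothesis $\g\in\{ADE_6E_7\}$: acyclicity of the relevant line bundles on $\Q(w)$ has been established in those types (see the cited work on semi-infinite flags), and is not currently available for $E_8$.

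Third, for $\la=\mu$ the identification $\Hom_{\mathfrak B}(\mathbb U_{-\la}, D_\la^*)\cong\bC$ is reduced to a direct check. Any morphism must send the cyclic generator $u_{-\la}$ into the $(-\la)$-weight space of $D_\la^*$, which is one-dimensional (being dual to the cyclic line of the Demazure module $D_\la$). Conversely one must verify that the defining relations of $\mathbb U_{-\la}$ from the Definition above are satisfied by the image vector; this amounts to comparing the annihilator of the extremal vector in $D_\la^*$ with the explicit list of relations, and is a finite calculation using the standard presentation of level-one affine Demazure modules.

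The central obstacle is thus the higher Ext-vanishing of step two; everything else is character arithmetic and a small direct computation. In particular, the type restriction $ADE_6E_7$ in the statement almost certainly enters exactly at this vanishing step, either through a geometric acyclicity on $\Q(w)$ or through the combinatorial control on the BGG-type resolution that underwrites the homological route.
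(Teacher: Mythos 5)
Your first step, the Euler--Poincar\'e/Cherednik-pairing computation, is exactly what the paper relegates to the Appendix as a \emph{numerical} consequence of the theorem: it only controls the alternating sum $\sum_i (-1)^i$ of the graded dimensions and, as you yourself note, cannot separate the individual $\mathrm{ext}$ groups. The genuine content of Theorem \ref{Th3} is therefore entirely in your second step, and there your proposal has a real gap: neither of the two routes you sketch is carried out, and neither is actually available. No BGG-type resolution of $\mathbb U_{-\la}$ by modules $\mathbb U_{\nu}$ is constructed in the paper or in the cited literature, and the groups $\mathrm{ext}^{\bullet}_{\mathfrak B}$ are never identified with sheaf-Ext on $\Q(w)$, so there is no Kempf-style vanishing to invoke. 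What the paper actually does is an induction on $u(\gamma)$ driven by Joseph's Demazure functors: the derived adjunction $\mathrm{ext}^{k}_{\fB}(\mathbb L\mathcal D_{i}(M),N)\cong \mathrm{ext}^{k}_{\fB}(M,\mathbb R\mathcal D_{i}^{\dag}(N))$ (Proposition \ref{adj}), the computation of $\mathcal D_{i}$ on both sides --- $\mathcal D_{i}(D_{\la})$ via Kumar, and for $\mathbb U_{-\la}$ the short exact sequence $0\to\mathbb U_{-\la}\to\mathcal D_{i}(\mathbb U_{-\la})\to\mathbb U_{-s_{i}\la}\to 0$ together with the cases $\mathcal D_{i}(\mathbb U_{-\la})\cong\mathbb U_{-\la}$ and $\mathcal D_{i}(\mathbb U_{-\la})\cong\{0\}$ (Proposition \ref{ind}) --- plus projective resolutions of $\mathbb W_{\la}$ and $D_{\mu}$ with weight control (Lemmas \ref{W-res}, \ref{D-res}) to kill the anti-dominant off-diagonal cases, and the base case $\la=\mu=0$ settled by the BGG resolution of $D_{0}\cong\bC_{0}$, in which the weight-zero generator occurs only in the zeroth term. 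That base case plus the induction also disposes of your third step; the direct verification of relations you propose for $\Hom_{\mathfrak B}(\mathbb U_{-\la},D_{\la}^{*})$ is never needed (and by itself it would say nothing about higher $\mathrm{ext}$ on the diagonal).

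Your diagnosis of where the hypothesis $ADE_{6}E_{7}$ enters is also off. The cohomological results on $\Q(w)$ from \cite{Kat} used in Section \ref{SoSIS} hold in all simply-laced types, including $E_{8}$, so acyclicity on semi-infinite Schubert varieties is not the issue. The restriction appears in Proposition \ref{ind}: the case of the affine simple reflection $i=0$ is reduced to a finite $i\in I$ by applying a diagram automorphism of the affine Dynkin diagram carrying $\al_{0}$ to a finite simple root, and such an automorphism exists for types $A,B,C,D,E_{6},E_{7}$ but not for $E_{8}$ (nor $F_{4},G_{2}$). This is precisely why the paper can only conjecture the $E_{8}$ case, and any correct proof along your lines would still have to confront this point.
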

We conjecture that the theorem holds also for type $E_8$ as well.

Our paper is organized as follows. In Section \ref{Prelim} we collect main definitions we use in the
main body of the paper. In Section \ref{Umod} we study the representation theory
of the local and global $U$-modules. In Section \ref{NM} the link between the representation theoretical
properties
of the modules $U_\mu$ and the combinatorics of the nonsymmetric Macdonald polynomials is established;
in particular, we prove Theorem \ref{Th1}.
Section \ref{SoSIS} contains the study of the geometry of the semi-infinite Schubert varieties; in particular,
we prove Theorem \ref{Th2}. Finally, Section \ref{cat} is devoted to the study of the categorical properties of
the
modules $\bU_{w\la}$ and to the proof of Theorem \ref{Th3}. A combinatorial consequence of Theorem \ref{Th3}
is discussed in the Appendix.

\section{Preliminaries}\label{Prelim}

For a $\bZ$-graded vector space $V = \bigoplus_{m \in \bZ} V_{m}$, we set
$$\mathsf{gdim} \, V := \sum_{m \in \bZ} q^{m} \dim \, V_{m},$$
that is a priori a formal sum. We also define $V^{*} := \bigoplus_{m \in \bZ} V_{-m}^{*}$, where its degree
$m$-part is understood to be $V_{-m}^{*}$.

\subsection{Finite dimensional objects}
Let $\fg$ be a simple Lie algebra of rank $n$ with the Cartan decomposition $\fg=\fn_+\oplus\fh\oplus\fn_-$. Let
$\Delta=\Delta_+\sqcup \Delta_-\subset\fh^*$ be the set of roots and let $Q$ be the root lattice spanned
by $\Delta$. We set $I := \{1,2,\ldots,n\}$. We denote by $\{ \al_i \}_{i \in I}$ the set of simple roots and by
$\{ \om_i\}_{i \in I}$ the set of fundamental weights. For a root $\al\in\Delta$, we denote by $\al^\vee\in\fh$ the corresponding coroot.
For the standard pairing $\bra\cdot,\cdot\ket:\fh^*\times\fh\to\bC$ one has $\bra \om_i,\al_j^\vee \ket
=\delta_{i,j}$.

For each $\al\in\Delta_+$, we denote by $e_\al\in\fn_+$ the corresponding Chevalley generator of $\g$. Similarly, for
$\al\in\Delta_-$, we denote by $f_\al$ the Chevalley generator of weight $\al$ in $\fn_-$.
Let $P=\bigoplus_{i=1}^n\bZ\om_i$ be the weight lattice with the dominant cone
$P_+=\sum_{i=1}^n \bZ_{\ge 0} \om_i$. We set $P_- := - P_+$. For $\la\in P_+$, we denote the corresponding
irreducible finite-dimensional highest weight $\fg$-module by $V(\la)$. Let $v\in V(\la)$ be a non-zero highest weight vector.
Then $\fn_+ v=0$ and the defining relations of $V(\la)$ as $\fn_-$ module are of the form
\[
f_{-\al}^{\bra\la,\al^\vee\ket+1}v=0, \ \al\in\Delta_+.
\]

Let $\fg^\vee$ be the simple Lie algebra defined by the dual Kac-Moody data of $\fg$.

Finally, we denote by $W$ the finite Weyl group of $\fg$. For a root $\al$, the corresponding reflection is denoted by $s_\al\in W$. For each $i \in I$, we set $s_{i} := s_{\al_{i}}$. We sometimes identify $s_\al$ with $s_{\al^\vee}$ as the Weyl groups of $\fg$ and $\fg^{\vee}$ coincide.

\subsection{Current algebras} \label{curalg}
Let $\fg[z]=\fg\T \bC[z]$ be the current algebra. We have a grading on $\fg [z]$ by setting $\deg \, a \otimes z^m = m$ for each $a \in \g \setminus \{ 0 \}$ and $m \ge 0$. For $\la \in P_+$, we define the local Weyl module
$W(\la)$ of $\fg[z]$ as the cyclic $\fg[z]$-module with a cyclic vector $w$ of $\fh$-weight $\la$ subject to the relations
$\fh\T z\bC[z] w=0$, $\fn_+\T\bC[z] w=0$ and
\[
(f_{-\al}\T 1)^{\bra \la,\al^\vee\ket+1} w=0,\ \al\in\Delta_+.
\]
We define the global Weyl module $\W(\la)$ of $\fg [z]$ by omitting the condition $\fh\T z\bC[z] w=0$.
The characters of the local and global Weyl modules differ by a simple factor. Namely, let
us consider the subspace $A(\la)$ of weight $\la$ vectors in $\W(\la)$. Being a quotient of ${\rm U}(\fh\T\bC[z])$ by a homogeneous ideal, the vector space $A(\la)$ carries a structure of a graded commutative algebra whose grading is induced by the grading of $\g [z]$. We set $r_i= \bra \la,\al_i^\vee\ket$ for each $i \in I$. Then, the following holds:
\begin{itemize}
\item The algebra $A(\la)$ is isomorphic to the polynomial algebra in variables
$\{ x_{i,a} \}_{i \in I, 1\le a\le r_i}$ of degree one, symmetric in each group of variables
$x_{i,1},\dots,x_{i,r_i}$;
\item The algebra $A(\la)$ acts freely on the global Weyl module $\W(\la)$. The action commutes with the action of $\fg[z]$;
\item One has $\W(\la)/\mathfrak{m}\W(\la)\simeq W(\la)$, where $\mathfrak{m}$ is the ideal of
$A_{\lambda_+}$ consisting of polynomials without a constant term.
\end{itemize}

Let $\I = \fn^+\oplus\fh \oplus\fg\T z\bC[z]\subset\fg\T\bC[z]$ be the Iwahori subalgebra.
\begin{rem}\label{rem:cartan}
Note that the Cartan subalgebra $\fh$ is included in $\I$. In \cite{FeMa3,FMO} the authors used
the algebra $\fn^{af}= \fn^+\oplus\fg\T z\bC[z]\subset\fg\T\bC[z]$ instead of $\I$.
The only difference is that the Cartan part $\fh$ is missing in $\fn^{af}$, i.e.
$\I=\fn^{af}\oplus\fh$.
\end{rem}
An $\I$ module $M$ is called graded, if $M=\bigoplus_{j\in \bZ} M_j$ such that each $M_j$ is $\fh$ semi-simple
(i.e. each $M_j$ is the sum of $\fh$ weight spaces) and $(x\T z^i) M_j\subset M_{i+j}$ for all $x\in\fg$,
$i\ge 0$. We define the character of $M$ as the formal linear combination
\[
\ch \, M=\sum_{j\in\bZ} q^j\ch \, M_j,
\]
there $\ch \, M_j$ is the $\fh$-module character. In what follows we always consider the modules $M$ whose
$\fh$-weights belong to $P$. We say that $\ch \, M$ is well-defined whenever we have $\ch \, M_j \in \bZ [P]$
for each $j \in \bZ$. We set $x_i=e^{\om_i}$. Then we have
$\ch \, M \in \bZ [x_1^{\pm},\dots,x_n^\pm][\![q,q^{-1}]\!]$ when $\ch \, M$ is well-defined.
If $M$ is cyclic with cyclic vector $v$, then we assume that
$v\in M_0$ unless stated otherwise.

One concludes that
$$\ch \, \W(\la) = (q)_{\la}^{-1} \cdot \ch \, W(\la),$$
where we have
$$(q)_{\la}=\prod_{i=1}^n\prod_{j=1}^{r_i} (1-q^j) = ( \mathsf{gdim} \, A(\la) )^{-1} \in \bZ [\![q]\!].$$
Extending this, we set $(q)_{w \la} := (q)_{\la}$ for each $w \in W$.

For each $\mu \in P$, let us denote by $E_{\mu}(x,q,t) \in \bC [P](q,t)$ the non-symmetric Macdonald polynomial in the sense of Cherednik \cite{Ch1}. In particular, the character of ${\mathbb W}(\la)$ for $\la \in P_{+}$ is given by
$$\mathrm{ch} \, {\mathbb W}(\la) = (q)_\la^{-1} \cdot E_{w_0\la}(x,q,0),$$
(cf. \cite{S,I,FL}).

Recall the $W$-action on the set
$\{e_\al\}_{\al\in\Delta_+}\cup\{f_{-\al}\T z\}_{\al\in\Delta_+}$ following \cite{FeMa3}: for an element
$\sigma\in W$
and $\al\in\Delta_+$ we set
\[
\widehat{\sigma} e_\al=\begin{cases}
e_{\sigma(\al)}, \qquad  \sigma(\al)\in\Delta_+,\\
f_{\sigma(\al)}\T z, \ \sigma(\al)\in\Delta_-,
\end{cases}
\widehat{\sigma} (f_{-\al}\T z)=\begin{cases}
e_{-\sigma(\al)}, \qquad  \sigma(\al)\in\Delta_-,\\
f_{-\sigma(\al)}\T z, \ \sigma(\al)\in\Delta_+.
\end{cases}
\]
We will also use the following notation for $\al\in\Delta_+$ and $r\ge 0$:
\begin{gather*}
e_{\widehat{\sigma}\al+r\delta}=\begin{cases}
e_{\sigma(\al)}\T z^r, &  \sigma(\al)\in\Delta_+,\\
f_{\sigma(\al)}\T z^{r+1}, & \sigma(\al)\in\Delta_-,
\end{cases},\\
e_{\widehat{\sigma}(-\al+\delta)+r\delta}=\begin{cases}
e_{-\sigma(\al)}\T z^r, &  \sigma(\al)\in\Delta_-,\\
f_{-\sigma(\al)}\T z^{r+1}, & \sigma(\al)\in\Delta_+.
\end{cases}
\end{gather*}

\subsection{Affine algebras}\label{Aa}
The affine Weyl group $W^{a}$ and the extended affine Weyl group $W^e$ attached to $\fg^\vee$ fits into the following commutative diagram:
$$
\xymatrix{
1 \ar[r] & Q \ar[r]\ar@{^{(}->}[d] & W^a \ar[r] \ar@{^{(}->}[d] & W \ar[r] \ar@{=}[d] & 1\\
1 \ar[r] & P \ar[r]^t & W^e \ar[r] & W \ar[r] & 1
},
$$
where the action of $W$ on $Q$ and $P$ are the standard ones. In particular, every element $w\in W^e$ can be uniquely
written as $w=t({\rm wt}(w)){\rm dir}(w)$, where ${\rm wt}(w) \in P$ and ${\rm dir}(w) \in W$. In what follows, we sometimes write $t_\mu$ for the image of $\mu \in P$ through the map $t$.

Let $\widehat\fg^\vee$ be the untwisted affine Kac-Moody algebra corresponding to $\fg^\vee$. The real roots of this dual affine Kac-Moody
algebra are of the form $\beta=\bar\beta + \delta^\vee\deg\beta$, where $\bar\beta\in\Delta^\vee$ and
$\delta^\vee$ is the primitive null root. We sometimes call the roots of $\widehat\fg^\vee$ the affine coroots.
Let us denote by $\widetilde\Delta^{a}_+$ the set of positive affine coroots, and denote by $\Delta^{a}_+$ the set of positive affine roots. For an element $w\in W^e$, the length $\ell(w)$
is defined as
$$\ell ( w ) := \# \{\beta \in \widetilde\Delta^{a}_+ \mid w ( \beta ) \not\in \widetilde\Delta^{a}_+ \}.$$
The set of length zero elements is denoted by $\Pi$. One has a semi-direct product decomposition $W^e=\Pi\rtimes W^a$.

It is standard that the positive level affine action of $W^a$ on $P \otimes_{\bZ} \bR$ identifies $P \otimes_{\bZ} \bR$ with the (closure of the) union of the $W^a$-translations of the fundamental region called the fundamental alcove \cite{Lus}. In the same fashion, we regard $\Pi \times ( P \otimes_{\bZ} \bR )$ as
the (closure of the) union of the $W^a$-translations of the fundamental alcove. For each $\pi \in \Pi$, we refer to $\pi \times ( P \otimes_{\bZ} \bR )$ as a sheet.
The set of alcoves contained in $\pi \times ( P \otimes_{\bZ} \bR )$ is in bijection with $\pi W^a$.

Let $u(\la)$ be the minimal length element in the coset $t_\la W$. We have $t _\la = u(\la)$ if and only if $\la \in P_-$
(see e.g. \cite[(2.4.5)]{M3}). Let $v(\la)\in W$ be the shortest element such that $v(\la)\la\in P_-$.
For $\la_-= v(\la)\la$ one has
\[
t_\la=u(\la)v(\la)\ \text{ and }\ t_{\la_-}=v(\la)u(\la).
\]
One also has $\ell(t_{\la_-})=\ell(v ( \la ))+\ell(u ( \la ) )$.

\subsection{Quantum Bruhat graphs}\label{qBg}

The Bruhat graph BG of $W$ (see e.g. \cite{BB}) is the directed graph whose set of vertices is identified with $W$ and we have an
arrow $w \to w s_\al$
for $w\in W$ and $\al \in \Delta_+$ if and only if $\ell ( w s_\al )=\ell ( w )+1$.
The quantum Bruhat graph QBG of $W$ (see e.g. \cite{BFP,LNSSS1}) is an enhancement of BG obtained by adding
a ``quantum" arrow $w \to w s_\al$ for each $w\in W$ and $\al \in \Delta_+$ so that
$$\ell ( ws_\al ) = \ell ( w ) - \sum_{\gamma \in \Delta_+} \bra \gamma, \al^{\vee} \ket + 1.$$
Note that QBG is obtained as the image of the covering relation graph of the periodic $W^a$-graph through the projection $W^a = W \ltimes Q \rightarrow W$ (see \cite{Lus, LNSSS1}). We denote the projection obtained as its enhancement $W^e = W \ltimes P \rightarrow W$ by ${\rm dir}$.

Assume that we are given an element $z_0\in W^e$ and a sequence of affine coroots
$\beta_1,\dots,\beta_l$. A path $p_J$ ($=:p$) corresponding to a set
$$J=\{1\le j_1<\dots <j_r\le l\} \subset 2^{[1,l]}$$
is a sequence $p_J=(z_0,z_1,\dots,z_r)$, where $z_{k+1}=z_ks_{\beta_{j_{k+1}}}$. Here we refer the last element $z_r$ as the end of the path $p$, and denote it by ${\rm end}(p)$ ($= {\rm end}(p_J) =z_r$). We say that $p$ is a quantum alcove path if ${\rm dir}$ induces the following path in the quantum Bruhat graph:
\[
{\rm dir}(z_0) \stackrel{\bar\beta_{j_{1}}}{\longrightarrow} {\rm dir}(z_1) \stackrel{\bar\beta_{j_{2}}}{\longrightarrow}
\dots \stackrel{\bar\beta_{j_{r}}}{\longrightarrow}{\rm dir}(z_r).
\]
For an alcove path $p$ we denote by $qwt^*(p)$ the sum of all $\beta_{j_k}$ such that the edge
${\rm dir}(z_{k-1}) \stackrel{\bar\beta_{j_k}}{\longrightarrow} {\rm dir}(z_k)$ is quantum.

For $u\in W^e$ one denotes by ${\mathcal{QB}}(id; u)$ the set of quantum alcove paths
with $z_0=u$ and with $\beta$'s coming from a fixed reduced decomposition of $u$ (see \cite{OS}). Also one denotes
by $\overleftarrow{\mathcal{QB}}(id; u)$ the set of alcove paths which project to some path in the graph obtained by QBG by reverting all the edges (the reversed quantum Bruhat graph). Note that both sets ${\mathcal{QB}}(id; u)$ and $\overleftarrow{\mathcal{QB}}(id; u)$ depend
on the reduced expression of $u$.

\subsection{Graded homomorphisms}
By a graded abelian category $\mathfrak C$, we mean an abelian category $\mathfrak C$ equipped with an autoequivalence $M \mapsto M \left< n \right>$ for each $M \in \mathfrak C$ and $n \in \bZ$ so that we have a
functorial isomorphism
$$( M \left< n \right> ) \left< - n \right> \cong M.$$

In this setting, we define
$$\mathrm{hom}_{\mathfrak C} ( M, N ) := \bigoplus_{n \in \bZ} \mathrm{Hom}_{\mathfrak C} ( M \left< n \right>,
N ) \hskip 5mm M, N \in \mathfrak C.$$
We regard $\mathrm{hom}_{\mathfrak C} ( M, N )$ as a $\bZ$-graded vector space whose $n$-th graded piece is given by
$$\mathrm{hom}_{\mathfrak C} ( M, N )_n := \mathrm{Hom}_{\mathfrak C} ( M \left< n \right>, N ).$$

In case $\mathfrak C$ has enough projectives, then we also define
$$\mathrm{ext}^i_{\mathfrak C} ( M, N ) := \bigoplus_{n \in \bZ} \mathrm{Ext}^i_{\mathfrak C} ( M \left< n \right>, N ).$$

They have the usual long exact sequences associated to short exact sequences (degreewise). For a graded abelian category $\mathfrak C$, we denote by $[\mathfrak C]$ its Grothendieck group. The group $[\mathfrak C]$ naturally admits a $\bZ [q,q^{-1}]$-module structure by identifying the action of $\left< 1 \right>$ with $q$.

\section{U-modules}\label{Umod}

In this section, we introduce two families of $\I$-modules $\{ U_{\la} \}_{\la \in P}$ and $\{ \mathbb U_{\la} \}_{\la \in P}$ parametrized by $P$ that represents non-symmetric Macdonald polynomials and those divided by their norms. We refer to $U_{\la}$ as the local $U$-module and
to $\mathbb U_\la$ as the global $U$-module. The final results are presented in subsection \ref{NM}. The main ideas are to find a
surjection $\W_{\la} \to \mathbb U_{\la}$, and refine the decomposition procedure from \cite{FeMa3, FMO} to identify $\mathbb U_{\la}$ with
a module with prescribed characters and defining equations (Theorem \ref{local}, Theorem \ref{decompositionglobal},
and Corollary \ref{combmodel}).

\subsection{Definitions}
For an anti-dominant weight $\lambda_-$ and an element $\sigma\in W$ we define two modules
$U_{\sigma(\la_-)}$ and ${\mathbb U}_{\sigma(\la_-)}$ as follows:
$U_{\sigma(\la_-)}$ is the cyclic $\I$-module with cyclic vector $u_{\sigma(\la_-)}$ of $\fh$-weight
$\sigma(\la_-)$
subject to the relations:
\begin{gather*}
\fh\T z\bC[z] u_{\sigma(\la_-)}=0,\\
(e_{\widehat{\sigma}(-\alpha+\delta)+r \delta}) u_{\sigma(\la_-)}=0,\ \al\in\Delta_+,r \geq 0 \\
 (f_{\sigma(\al)}\T z)^{-\bra \la_-,\al^\vee\ket+1} u_{\sigma(\la_-)}=0,\ \al\in\Delta_+,
 \sigma\al\in\Delta_-,\\
(e_{\sigma(\al)}\T 1)^{-\bra \la_-,\al^\vee\ket} u_{\sigma(\la_-)}=0,\ \al\in\Delta_+, \sigma\al\in\Delta_+.
\end{gather*}
The definition of the $\I$-module ${\mathbb U}_{\sigma(\lambda_-)}$ differs from the definition of the
$U_{\sigma(\la_-)}$ by removing the first line relation: $\fh\T z\bC[z] u_{\sigma(\la_-)}=0$.

\begin{rem}
Let $\fg$ be of type ADE, and let $D_{\sigma(\la_-)}$ be the level one affine Demazure module whose cyclic vector has weight $\sigma(\la_-)$ (if we restrict the weight to $\fh$; see e.g. \cite{FL}). Then
$D_{\sigma(\la_-)}$ is the cyclic $\I$-module with the same set of relations as for $U_{\sigma(\la_-)}$ with the last two lines
replaced with
\begin{gather*}
(f_{\sigma(\al)}\T z)^{-\bra \la_-,\al^\vee\ket} u_{\sigma(\la_-)}=0,\ \al\in\Delta_+, \sigma\al\in\Delta_-,\\
(e_{\sigma(\al)}\T 1)^{-\bra \la_-,\al^\vee\ket+1} u_{\sigma(\la_-)}=0,\ \al\in\Delta_+, \sigma\al\in\Delta_+.
\end{gather*}
i.e. $+1$ got moved down from the third relation to the fourth relation.
\end{rem}

\begin{rem}
Let $W_{\sigma(\la_-)}$ be the generalized Weyl module (\cite{FeMa3,FMO,No}). Then
$W_{\sigma(\la_-)}$ is the cyclic $\I$-module with the same set of relations as for $U_{\sigma(\la_-)}$ with the last
two lines
replaced with
\begin{gather*}
(f_{\sigma(\al)}\T z)^{-\bra \la_-,\al^\vee\ket+1} u_{\sigma(\la_-)}=0,\ \al\in\Delta_+, \sigma\al\in\Delta_-,\\
(e_{\sigma(\al)}\T 1)^{-\bra \la_-,\al^\vee\ket+1} u_{\sigma(\la_-)}=0,\ \al\in\Delta_+, \sigma\al\in\Delta_+.
\end{gather*}
i.e. $+1$ is now present in both of the last two relations.
\end{rem}

In the below, we denote the cyclic vector $u_{\la} \in W _{\la}$ by $w_{\la}$ in order to distinguish it from the cyclic vector of $U_{\la}$ (where $\la = \sigma ( \la_- )$).

\begin{cor}
Under the above settings, we have natural surjections of $\I$-modules $W_{\sigma(\la_-)}\to D_{\sigma(\la_-)}$ and $W_{\sigma(\la_-)}\to
U_{\sigma(\la_-)}$.
\end{cor}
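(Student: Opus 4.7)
The plan is to invoke the universal property of the cyclic $\I$-modules $W_{\sigma(\la_-)}$, $D_{\sigma(\la_-)}$, $U_{\sigma(\la_-)}$ defined by generators and relations. All three are cyclic on a vector of $\fh$-weight $\sigma(\la_-)$, so to produce an $\I$-module map $W_{\sigma(\la_-)} \to D_{\sigma(\la_-)}$ (respectively $W_{\sigma(\la_-)} \to U_{\sigma(\la_-)}$) that sends the cyclic vector to the cyclic vector, it suffices to check that every defining relation of $W_{\sigma(\la_-)}$ holds on the cyclic vector of $D_{\sigma(\la_-)}$ (respectively $U_{\sigma(\la_-)}$).

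Set $n_{\al} := -\bra \la_-, \al^\vee\ket \geq 0$ for $\al \in \Delta_+$. Inspect the four families of relations side by side. The first two lines (the Cartan-grade relation $\fh\T z\bC[z]u = 0$ and the second-line relation on the operators $e_{\widehat{\sigma}(-\al+\delta)+r\delta}$) are \emph{identical} in all three presentations, hence hold tautologically. Among the last two lines, one relation matches on the nose in each case while the other is shifted by one in the exponent. For $W_{\sigma(\la_-)} \to U_{\sigma(\la_-)}$, the $f$-relation $(f_{\sigma(\al)} \T z)^{n_\al + 1} u = 0$ for $\sigma\al \in \Delta_-$ agrees verbatim, while the $W$-relation $(e_{\sigma(\al)} \T 1)^{n_\al + 1} u = 0$ for $\sigma\al \in \Delta_+$ follows from the strictly stronger $U$-relation $(e_{\sigma(\al)} \T 1)^{n_\al} u = 0$ by left-multiplication by $e_{\sigma(\al)} \T 1$. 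The argument for $W_{\sigma(\la_-)} \to D_{\sigma(\la_-)}$ is symmetric: the $e$-relation is common, and the $W$-relation $(f_{\sigma(\al)} \T z)^{n_\al + 1} u = 0$ is deduced from the strictly stronger $D$-relation $(f_{\sigma(\al)} \T z)^{n_\al} u = 0$ by left-multiplication by $f_{\sigma(\al)} \T z$.

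By the universal property, this yields unique $\I$-module homomorphisms $W_{\sigma(\la_-)} \to D_{\sigma(\la_-)}$ and $W_{\sigma(\la_-)} \to U_{\sigma(\la_-)}$ sending cyclic vector to cyclic vector. Each map hits the cyclic vector of its target, and since the target is generated over $\I$ by that cyclic vector, both maps are surjective. There is no genuine obstacle here; the argument is a direct comparison of presentations, which is precisely why the statement is recorded as a corollary of the two remarks preceding it.
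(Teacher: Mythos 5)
Your argument is correct and is exactly the paper's (the paper's proof is the one-line "clear from the comparison of the defining relations", which you have simply spelled out): the generators and relations coincide except in the last two lines, where in each case the target's shifted relation is strictly stronger, so the universal property of the cyclic presentation gives the surjections. No issues.
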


\begin{proof}
Clear from the comparison of the defining relations.
\end{proof}

\begin{rem}\label{WDU-rel}
{\bf 1)} For $\la_- \in P_-$, the surjection $W_{\la_-}\to D_{\la_-}$ is an
isomorphism; {\bf 2)} For $\la \in P_+$, the surjection $W_{\la}\to U_{\la}$ is an
isomorphism.
\end{rem}

\begin{thm}[\cite{FeMa3,FMO,Kat}]
For $\la \in P_{+}$, the character of ${\mathbb W}_\la$ is given by
$$\mathrm{ch} \, {\mathbb W}_{\la} = (q)_\la^{-1} \cdot w_0 E_{w_0\la}(x,q^{-1},\infty).$$
\end{thm}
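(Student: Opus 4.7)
The plan is to realize $\mathbb{W}_\la$ as a generalized global Weyl module in the sense of \cite{FeMa3, FMO} and to compute its character via a filtration whose subquotients match the Orr--Shimozono monomial expansion of $E_{w_0\la}(x, q^{-1}, \infty)$.

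Setting $\la_- := w_0 \la \in P_-$ and $\sigma := w_0$, one has $\sigma(\la_-) = \la$, so $\mathbb{W}_\la$ is identified with $\mathbb{W}_{\sigma(\la_-)}$. Unwinding the $\widehat{\sigma}$-action rules of \S\ref{curalg} for this choice, the defining relations reduce to $\fn_+ \cdot u_\la = 0$ together with $(f_{-\beta}\T z)^{\bra\la,\beta^\vee\ket+1}\cdot u_\la = 0$ for every $\beta \in \Delta_+$, so $\mathbb{W}_\la$ carries a cyclic highest-weight Iwahori-module structure compatible with the constructions of \cite{FeMa3,FMO}. As a sanity check, this character must also be consistent with the Sanderson--Ion formula $\ch\,\mathbb{W}(\la) = (q)_\la^{-1} E_{w_0\la}(x,q,0)$ recalled in \S\ref{curalg}, which already forces the combinatorial identity $E_{w_0\la}(x,q,0) = w_0 E_{w_0\la}(x,q^{-1},\infty)$ at the level of symmetric-function expansions.

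To compute the character directly, I would construct a filtration of $\mathbb{W}_\la$ indexed by quantum alcove paths in $\overleftarrow{\mathcal{QB}}(id; u(\la))$ for a fixed reduced expression of $t_\la \in W^e$. Each successive quotient is expected to be a rank-one free module over the polynomial algebra $A(\la)$ of \S\ref{curalg}: the $A(\la)$-freeness contributes the $(q)_\la^{-1}$ prefactor, while the $\fh$-weight of each cyclic vector reproduces a single $x$-monomial of the Orr--Shimozono expansion of $E_{w_0\la}(x, q^{-1}, \infty)$. The $w_0$-twist on the right-hand side arises because the natural combinatorial index of an alcove path lies in an antidominant translate while the Iwahori weights of $\mathbb{W}_\la$ are read off in the dominant region.

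The principal obstacle is the vanishing claim underlying the filtration: one must show that every PBW-type monomial acting on $u_\la$ whose corresponding alcove path does \emph{not} lie in $\overleftarrow{\mathcal{QB}}(id;u(\la))$ is forced to vanish modulo lower filtration pieces by the defining relations above. This requires an inductive analysis on the length of a reduced expression of $t_\la$ in $W^e$, combined with the combinatorics of the quantum Bruhat graph of \S\ref{qBg} to detect which PBW orderings survive; once this is carried out, summing the graded contributions term by term against the Orr--Shimozono formula yields the stated character identity.
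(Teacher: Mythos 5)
Your reduction of $\mathbb{W}_\la$ to $\mathbb{W}_{\sigma(\la_-)}$ with $\sigma=w_0$, $\la_-=w_0\la$, and the resulting relations $(f_{-\beta}\T z)^{\bra\la,\beta^\vee\ket+1}u_\la=0$ (together with $\fn_+\T\bC[z]\,u_\la=0$, not just $\fn_+u_\la=0$) is correct, and the route you sketch --- decomposition into pieces indexed by quantum alcove paths plus the Orr--Shimozono formula --- is indeed the route of the cited references and of Section \ref{Umod} of this paper. However, your ``sanity check'' is false and signals a conflation that you must not make: $\mathbb{W}_\la$ (subscript; the generalized global Weyl module over $\I$, with relations in $f_{-\beta}\T z$) is not the classical global Weyl module $\mathbb{W}(\la)$ (a $\g[z]$-module, with relations in $f_{-\beta}\T 1$), and the Sanderson--Ion formula for the latter does not force $E_{w_0\la}(x,q,0)=w_0E_{w_0\la}(x,q^{-1},\infty)$. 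That identity is simply wrong: already for $\g=\msl_2$ and $\la=\om$ one has $E_{-\om}(x,q,0)=x+x^{-1}$, while $E_{-\om}(x,q^{-1},\infty)=x^{-1}+qx$, so $w_0E_{-\om}(x,q^{-1},\infty)=x+qx^{-1}$; correspondingly $\ch\,W(\om)=x+x^{-1}$ but $\ch\,W_\om=x+qx^{-1}$, because in $W_\om$ the weight $-\om$ line is spanned by $(f\T z)u_\om$ and sits in $q$-degree one. The two specializations of $E_{w_0\la}$ genuinely differ by $q$-powers attached to monomials, which is exactly why the theorem needs a proof independent of the $t=0$ story.

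The second, and main, problem is that the substantive content of the theorem is precisely the part you defer. Two independent inputs are required and neither is argued: (a) freeness of the highest-weight algebra $A(\la)$ acting on $\mathbb{W}_\la$, which is what produces the factor $(q)_\la^{-1}$ (Theorem \ref{FMO-free} / Proposition \ref{f-factor} / Corollary \ref{U-free} here, going back to \cite{FMO}); your stronger expectation of a filtration with rank-one free $A(\la)$-subquotients whose cyclic weights reproduce the Orr--Shimozono monomials cannot just be posited --- it has to be extracted from (a) together with (b) the local statement $\ch\,W_\la=w_0E_{w_0\la}(x,q^{-1},\infty)$, whose proof is the decomposition procedure: one must show that the kernel of $W_{\sigma(\la_-)}(m-1)\to W_{\sigma(\la_-)}(m)$ is isomorphic to $W_{\sigma s_{\bar\beta_m}(\la_-)}(m)$ exactly when the corresponding QBG edge exists, and this rests on the rank-two/Coxeter-move analysis (Lemmas \ref{braidonbetas}, \ref{ranktworeduction}, Proposition \ref{decompositionlocal}, Theorem \ref{kernel}) plus the $\mu=\la_-$ path count. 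Your ``principal obstacle'' paragraph names this step but supplies no argument, so what you have is a plan coinciding with \cite{FeMa3,FMO} (and with Theorem \ref{UWchar}, Corollary \ref{lambdasigma} specialized to $\sigma=w_0$), not a proof. A further detail to fix: the paths should be built from a reduced expression of the antidominant translation $t_{w_0\la}=u(w_0\la)$; paths in $\overleftarrow{\mathcal{QB}}(id;u(\la))$ with $\la$ dominant would compute $E_{\la}(x,q^{-1},\infty)$ rather than $E_{w_0\la}(x,q^{-1},\infty)$, and the $w_0$-twist then comes from transporting reversed-QBG paths to QBG paths by left multiplication by $w_0$, as in the proof of Theorem \ref{UWchar}.
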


\subsection{From local to global}

Now assume that $\la_- \in P_-$ and $\sigma \in W$ satisfy $\bra \la_-,\al_i^\vee\ket<0$ for each $i \in I$ such that
$\sigma\al_i\in\Delta_+$. We define
\begin{equation}\label{la-sigma}
(\la_-)_\sigma=\la_-+\sum_{j: \sigma\al_j>0}\om_j\in P_-.
\end{equation}

\begin{rem}\label{><}
In \cite{Kat}, the weight $(\la_+)_w$ is defined for each $\lambda_+ \in P_+$ and $w\in W$ as $\la_+-\sum_{j: w\al_j<0}\om_j$. Our notation $(\la_-)_\sigma$ (that follows \cite{FeMa3,FMO}) is different from \cite{Kat}, and the relation between two notations are: $w\to \sigma w_0$, and $\la_+\to w_0\la_-$. Note that we have
$\sigma\la_-= w\la_+$, and one has
\[
w_0(\la_-+\sum_{j: (w w_0)\al_j>0}\om_j)=w_0\la_- + \sum_{w (w_0\al_j)>0} w_0\om_j = \la_+ - \sum_{w
\al_{w_0j}<0} \om_{w_0j},
\]
where we define the number $w_0j$ by $\om_{w_0j}=-w_0\om_j$.
This implies that $w_0(\la_-)_\sigma=(\la_+)_w$.
\end{rem}

\begin{lem}\label{UW}
There exists surjective homomorphisms of $\fn^{af}$-modules $U_{\sigma(\la_-)}\to W_{\sigma((\la_-)_\sigma)}$
and $($its global version$)$
${\mathbb U}_{\sigma(\la_-)}\to {\mathbb W}_{\sigma((\la_-)_\sigma)}$ induced by sending $u_{\sigma(\la_-)}$ to $w_{\sigma((\la_-)_\sigma)}$.
\end{lem}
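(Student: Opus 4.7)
The plan is to establish both maps by a cyclicity observation together with a relation-by-relation verification. Since $w_{\sigma((\la_-)_\sigma)}$ is a cyclic vector of $W_{\sigma((\la_-)_\sigma)}$ as an $\I$-module on which $\fh$ acts by a scalar (weight), it remains a cyclic vector under restriction to the subalgebra $\fn^{af}\subset\I$, and similarly in the global setting. Consequently, as soon as a well-defined $\fn^{af}$-module map $u_{\sigma(\la_-)}\mapsto w_{\sigma((\la_-)_\sigma)}$ exists, it is automatically surjective, and the lemma reduces to checking that $w_{\sigma((\la_-)_\sigma)}$ satisfies the $\fn^{af}$-part of the defining relations of $U_{\sigma(\la_-)}$ (respectively of $\mathbb U_{\sigma(\la_-)}$).

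Discarding the $\fh\T z\bC[z]$-relation, which lies outside $\fn^{af}$ and is in any case absent from the global modules, the remaining families of relations of $U_{\sigma(\la_-)}$ are: (i) the ``top-line'' relations $e_{\widehat\sigma(-\al+\delta)+r\delta}u_{\sigma(\la_-)}=0$ for $\al\in\Delta_+$, $r\geq 0$; (ii) $(f_{\sigma\al}\T z)^{-\bra\la_-,\al^\vee\ket+1}u_{\sigma(\la_-)}=0$ when $\sigma\al\in\Delta_-$; and (iii) $(e_{\sigma\al}\T 1)^{-\bra\la_-,\al^\vee\ket}u_{\sigma(\la_-)}=0$ when $\sigma\al\in\Delta_+$. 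Family (i) holds literally in $W_{\sigma((\la_-)_\sigma)}$, since its presentation involves the very same $\widehat\sigma$-action. In families (ii) and (iii), the presentation of $W_{\sigma((\la_-)_\sigma)}$ supplies the analogous vanishing of the same monomial but with $\la_-$ replaced by $(\la_-)_\sigma$ and with the exponent shifted by $+1$. Since a relation $X^{N}w=0$ implies $X^{M}w=0$ for every $M\geq N$, it suffices to show that the $W$-exponent is at most the $U$-exponent, which, using $(\la_-)_\sigma-\la_-=\sum_{j:\sigma\al_j>0}\om_j$, amounts to the two inequalities
\[
\sum_{j:\sigma\al_j>0}\bra\om_j,\al^\vee\ket \geq 0 \ \text{ for (ii)}, \qquad \sum_{j:\sigma\al_j>0}\bra\om_j,\al^\vee\ket \geq 1 \ \text{ for (iii)}.
\]
The first is immediate from $\al\in\Delta_+$, as every $\bra\om_j,\al^\vee\ket$ is then non-negative.

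The only substantive point, and the anticipated obstacle, is the strict inequality in (iii). The plan is to expand $\al^\vee=\sum_j d_j\al_j^\vee$ with $d_j\in\bZ_{\geq 0}$, note that the support of $\al^\vee$ in simple coroots coincides with the support of $\al$ in simple roots, and argue by contradiction: if every $j$ in this support satisfied $\sigma\al_j\in\Delta_-$, then $\sigma\al=\sum_{j} c_j\sigma\al_j$ (with $\al=\sum_j c_j\al_j$) would be a non-trivial non-positive integral combination of simple roots, contradicting $\sigma\al\in\Delta_+$. Hence some $j$ with $d_j\geq 1$ satisfies $\sigma\al_j\in\Delta_+$, yielding $\sum_{j:\sigma\al_j>0}\bra\om_j,\al^\vee\ket=\sum_{j:\sigma\al_j>0}d_j\geq 1$. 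The global version $\mathbb U_{\sigma(\la_-)}\to\mathbb W_{\sigma((\la_-)_\sigma)}$ follows by exactly the same reasoning, since dropping the $\fh\T z\bC[z]$-relation from both presentations does not affect any step above.
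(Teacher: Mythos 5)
Your proof is correct and takes essentially the same route as the paper: both reduce the lemma to checking the defining relations of $U_{\sigma(\la_-)}$ (resp.\ $\mathbb U_{\sigma(\la_-)}$) on the cyclic vector of the target generalized Weyl module, with the only substantive point being the strict inequality in the case $\sigma\al\in\Delta_+$, which the paper obtains by exactly your argument — the existence of a simple root in the support of $\al$ that $\sigma$ sends into $\Delta_+$, forcing $\sum_{j:\sigma\al_j>0}\bra\om_j,\al^\vee\ket\geq 1$. One small slip worth noting: $\fh\T z\bC[z]$ actually lies \emph{inside} $\fn^{af}=\fn^+\oplus\fg\T z\bC[z]$ (only the zero-mode $\fh$ is missing), so in the local case that relation is part of what must be verified; however, it holds trivially because $W_{\sigma((\la_-)_\sigma)}$ has the identical relation in its presentation, so the argument is unaffected.
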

\begin{proof}
The proofs of the both assertions are by checking the relations of the LHS in the RHS. To this end, there are essentially no difference in the proofs of the both cases, and we only exhibit the global case.

We show that all the defining relations of ${\mathbb U}_{\sigma(\la_-)}$ imposed on $u_{\sigma(\la_-)}$ hold for $w_{\sigma((\la_-)_\sigma)} \in {\mathbb
W}_{\sigma((\la_-)_\sigma)}$.

Let $\al\in\Delta_+\cap\sigma^{-1} \Delta_+$. We need to verify that
\begin{equation}\label{UtoW}
(e_{\sigma(\al)}\T 1)^{-\bra \la_-,\al^\vee\ket} w_{\sigma((\la_-)_\sigma)}=0
\end{equation}
in ${\mathbb W}_{\sigma((\la_-)_\sigma)}$, while we have
\[
(e_{\sigma(\al)}\T 1)^{-\bra (\la_-)_\sigma,\al^\vee\ket+1} w_{\sigma((\la_-)_\sigma)}=0
\]
in ${\mathbb W}_{\sigma((\la_-)_\sigma)}$ by definition.

Our assumption from the beginning of this subsection says that we have $\bra \la_-,\al_i^\vee\ket<0$ for each $i \in I$ so that $\sigma\al_i\in\Delta_+$. Since $\sigma\al\in \Delta_+$, there exists at least one simple root $\al_i$ so that $\al_i$ appears with positive multiplicity if we write $\al$ by a non-negative integer linear combination of simple roots, $\sigma\al_i\in\Delta_+$, and $\bra\la_-,\al^\vee_i\ket<0$. Hence, we have
\[
-\bra (\la_-)_\sigma,\al^\vee\ket+1\le -\bra \la_-,\al^\vee\ket
\]
and \eqref{UtoW} holds.

It remains to show that for $\al\in\Delta_+\cap\sigma^{-1} \Delta_-$, it holds that
\[
(f_{\sigma(\al)}\T z)^{-\bra \la_-,\al^\vee\ket+1} w_{\sigma(\la_-)}=0
\]
in ${\mathbb W}_{\sigma((\la_-)_\sigma)}$. This follows from the obvious inequality
$-\bra (\la_-)_\sigma,\al^\vee\ket\le -\bra \la_-,\al^\vee\ket$ for any $\al\in\Delta_+$.
\end{proof}

\begin{thm}[\cite{FMO}]\label{FMO-free}
The graded vector space of $\fh$-weight $\sigma(\la_-)$ vectors of $\W_{\sigma(\la_-)}$ affords a regular representation
of the algebra $A(w_0\la_-)$ described in section \ref{Prelim} through the action of
$$A ( w_0 \la _-) \longrightarrow \mathrm{End}_{\I} ( \W_{\sigma(\la_-)} )$$
on the cyclic vector of $\W_{\sigma(\la_-)}$. 
Moreover, $A(w_0\la_-)$ acts freely on $\W_{\sigma(\la_-)}$ and the quotient with respect to the augumentation ideal is isomorphic to the local generalized Weyl module $W_{\sigma(\la_-)}$. \hfill $\Box$
\end{thm}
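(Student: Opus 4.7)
The plan is to adapt the Chari--Pressley/Chari--Loktev argument for global Weyl modules \cite{FL} to the Iwahori/generalized setting, where the twist by $\widehat\sigma$ introduced in \S \ref{curalg} reduces the structural part to the dominant case. First, I would construct an $A(w_0\la_-)$-action on $\W_{\sigma(\la_-)}$ by $\I$-module endomorphisms. Since $\fh \otimes \bC [z]$ is abelian, contained in $\I$, and preserves $\fh$-weights, for any $h \in U(\fh \otimes z\bC [z])$ the vector $h \cdot w_{\sigma(\la_-)} \in \W_{\sigma(\la_-)}$ has weight $\sigma(\la_-)$. The commutation $[h_0 \otimes z^m, x \otimes z^k] = \alpha(h_0)\, x \otimes z^{m+k}$ for any root vector $x$ of root $\alpha$ implies, by an inductive computation on the exponents appearing in the defining relations of $\W_{\sigma(\la_-)}$, that $h \cdot w_{\sigma(\la_-)}$ again satisfies every one of those relations. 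By the universal property of the cyclic module, this produces a homomorphism
\[
\phi \colon U(\fh \otimes z\bC [z]) \longrightarrow \mathrm{End}_{\I}(\W_{\sigma(\la_-)})^{\mathrm{op}}
\]
with commutative image, which I may evaluate on $w_{\sigma(\la_-)}$ to define an action on the whole module.

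Next, I would identify the image of $\phi$ with the polynomial algebra $A(w_0\la_-)$. The twisted generators $\widehat\sigma(f_{-\al}\T z)$ and $\widehat\sigma(e_\al)$ entering the defining relations of $\W_{\sigma(\la_-)}$ match, under the relabeling furnished by $\widehat\sigma$, the defining generators of $\W_{w_0\la_-}$ for the dominant weight $w_0\la_- \in P_+$. This lets me import the classical Garland-type polynomial identities and force the image of the PBW monomials in $h_i \otimes z^m$ under $\phi$ to be symmetric in each group of generators indexed by $\{ \bra w_0\la_-, \al_i^\vee \ket \}_{i \in I}$. The resulting surjection $A(w_0\la_-) \twoheadrightarrow \mathrm{Im}\, \phi$, applied to a lift of the cyclic vector of $W_{\sigma(\la_-)}$, then yields an $A(w_0\la_-)$-linear and $\I$-equivariant surjection
\[
A(w_0\la_-) \otimes_\bC W_{\sigma(\la_-)} \twoheadrightarrow \W_{\sigma(\la_-)}.
\]

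Finally, to upgrade this to an isomorphism, simultaneously obtaining freeness, the regular-representation statement on the weight-$\sigma(\la_-)$ subspace, and the quotient description, I would compare characters. The identity
\[
\ch\, \W_{\sigma(\la_-)} \;=\; (q)_{\la_-}^{-1} \cdot \ch\, W_{\sigma(\la_-)} \;=\; \mathsf{gdim}\, A(w_0\la_-) \cdot \ch\, W_{\sigma(\la_-)}
\]
would be established by globalizing the recursive Demazure-type decomposition of $\ch\, W_{\sigma(\la_-)}$ from \cite{FeMa3,FMO}: running the same induction on Weyl group elements in the global setting produces the same numerator but an additional factor $(1-q^j)^{-1}$ at each step, tracking the extra freedom in the Cartan current at the cyclic vector. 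Once the characters agree, the above surjection is forced to be an isomorphism; modding out by the augmentation ideal $\mathfrak m \subset A(w_0\la_-)$ then imposes exactly the missing relation $\fh \otimes z\bC [z] \cdot w_{\sigma(\la_-)} = 0$, recovering $W_{\sigma(\la_-)}$. The main obstacle is the character identity together with the careful bookkeeping of how $\widehat\sigma$ interacts with the Garland-type symmetrization; both are precisely the points where the non-dominance of $\sigma(\la_-)$ is genuinely tested, and require the refined decomposition machinery of \cite{FeMa3,FMO}.
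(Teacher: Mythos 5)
First, note that the paper does not actually prove Theorem \ref{FMO-free}: it is quoted from \cite{FMO} (cf.\ Proposition 3.8 and Theorem 3.16 there) and closed with a $\Box$. Measured against that source — and against the parallel arguments this paper does carry out for $\mathbb U_{\sigma(\la_-)}$ in Proposition \ref{f-factor}, Corollary \ref{U-free}, Theorem \ref{decompositionglobal} and Corollary \ref{lambdasigma} — your overall route is the right one: build the $U(\fh\T z\bC[z])$-action on the cyclic vector as $\I$-endomorphisms, identify the top weight space with a quotient of $A(w_0\la_-)$, establish the character identity $\ch\,\W_{\sigma(\la_-)}=(q)_{\la_-}^{-1}\ch\,W_{\sigma(\la_-)}$ by the global decomposition procedure, and deduce freeness. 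That is exactly the \cite{FMO} strategy.

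However, three steps are not correct as written. (i) You cannot ``import the classical Garland-type identities'' by relabeling via $\widehat\sigma$: $\widehat\sigma$ is defined only on a distinguished set of generators and is not an automorphism of $\I$ or $\g[z]$, and indeed $\W_{\sigma(\la_-)}\not\cong\W_{w_0\la_-}$ (their characters differ). The correct reduction is rank-one: for each $i\in I$ the relation $(e_{\sigma(\al_i)}\T 1)^{r_i+1}w=0$ or $(f_{\sigma(\al_i)}\T z)^{r_i+1}w=0$, together with the corresponding vanishing relation, is a statement inside a copy of $\mathfrak{sl}(2,\bC[z])$ (possibly twisted by $z$), and only this $\mathfrak{sl}(2)$-calculation — as in the proof of Proposition \ref{f-factor} — gives the surjection $A(w_0\la_-)\twoheadrightarrow U(\fh\T z\bC[z])w$. (ii) The claimed $\I$-equivariant surjection $A(w_0\la_-)\T_\bC W_{\sigma(\la_-)}\twoheadrightarrow\W_{\sigma(\la_-)}$ does not exist a priori: it would require an $\I$-module splitting of $\W_{\sigma(\la_-)}\to W_{\sigma(\la_-)}$, which is essentially the freeness you are trying to prove. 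The standard substitute is to check $\W_{\sigma(\la_-)}/\W_{\sigma(\la_-)}\mathfrak m\cong W_{\sigma(\la_-)}$ (both directions are short) and invoke the graded Nakayama lemma, exactly as in Corollary \ref{U-free}; equality of characters then forces freeness, and the one-dimensionality of the $\sigma(\la_-)$-weight space of the local module gives the regular-representation claim. (iii) The bookkeeping ``an additional factor $(1-q^j)^{-1}$ at each step'' is wrong: in the induction of Theorem \ref{decompositionglobal} (following \cite[Theorem 3.16]{FMO}) the denominator changes from $(q)_\mu$ to $(q)_{\mu+\om_i}$ only at those steps where $-\bar\beta_{j+1}$ is a simple coroot, and this is precisely the delicate point of the argument, not a uniform per-step factor. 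With these repairs your sketch reproduces the cited proof; as written, step (ii) in particular is a genuine gap rather than a stylistic shortcut.
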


\begin{cor}
In the same setting as in Theorem \ref{FMO-free}, we have
$$\ch \, \W_{\sigma(\la_-)}=(q)_{\la_-}^{-1} \cdot \ch \, W_{\sigma(\la_-)}.$$
\end{cor}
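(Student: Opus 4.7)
The plan is to derive the character identity directly from Theorem \ref{FMO-free} by combining freeness with weight-preservation.

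First I would use the fact that $A(w_0 \la_-)$ acts freely on $\W_{\sigma(\la_-)}$ with quotient $W_{\sigma(\la_-)}$ (by the augmentation ideal) to produce an isomorphism of graded vector spaces
\[
\W_{\sigma(\la_-)} \;\cong\; A(w_0\la_-) \otimes_{\bC} W_{\sigma(\la_-)}.
\]
Concretely, lift a graded basis of $W_{\sigma(\la_-)} = \W_{\sigma(\la_-)}/\mathfrak{m}\W_{\sigma(\la_-)}$ to $\W_{\sigma(\la_-)}$; freeness of the $A(w_0\la_-)$-action implies that multiplying these lifts by a homogeneous basis of $A(w_0\la_-)$ yields a graded basis of $\W_{\sigma(\la_-)}$.

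Next I would observe that the $A(w_0\la_-)$-action is weight-preserving for the $\fh$-action: by construction $A(w_0\la_-)$ is a quotient of $\mathrm{U}(\fh \otimes \bC[z])$, so every homogeneous element of $A(w_0\la_-)$ has trivial $\fh$-weight. Hence the above graded vector space isomorphism refines to an isomorphism of $\fh$-weighted graded spaces, and therefore on characters
\[
\ch \, \W_{\sigma(\la_-)} \;=\; \mathsf{gdim} \, A(w_0\la_-) \cdot \ch \, W_{\sigma(\la_-)}.
\]

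Finally I would plug in the formulas for $\mathsf{gdim} \, A(w_0\la_-)$ recorded in Section~\ref{Prelim}: we have $(\mathsf{gdim} \, A(\mu))^{-1} = (q)_{\mu}$ for each $\mu \in P_+$, and the convention $(q)_{w\mu} = (q)_\mu$ for all $w \in W$, giving
\[
\mathsf{gdim} \, A(w_0\la_-) \;=\; (q)_{w_0\la_-}^{-1} \;=\; (q)_{\la_-}^{-1}.
\]
Substituting yields the claimed identity. There is no real obstacle here; the only point worth flagging is the weight-preservation of the $A(w_0\la_-)$-action, which is immediate from its definition as a quotient of a Cartan enveloping algebra, but it is what lets us upgrade the graded dimension identity to a character identity.
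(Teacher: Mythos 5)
Your proof is correct and is essentially the argument the paper intends: the corollary is stated without proof precisely because it follows immediately from Theorem \ref{FMO-free} via the standard freeness/graded-Nakayama argument you give, using that $A(w_0\la_-)$, being a quotient of $\U(\fh\T\bC[z])$, acts with trivial $\fh$-weight, together with $\mathsf{gdim}\, A(w_0\la_-) = (q)_{\la_-}^{-1}$.
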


\begin{prop}\label{f-factor}
The character of the $\fh$-weight $\sigma(\la_-)$ vectors in ${\mathbb U}_{\sigma(\la_-)}$ is equal to
$(q)_{(\la_-)_\sigma}^{-1}$.
\end{prop}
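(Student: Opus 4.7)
The plan is to sandwich the graded dimension of the weight-$\sigma(\la_-)$ subspace $V_U \subset {\mathbb U}_{\sigma(\la_-)}$ between two matching bounds, both obtained via $\I$-module surjections established earlier, combined with Theorem \ref{FMO-free}.

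For the lower bound $\mathsf{gdim}\, V_U \geq (q)^{-1}_{(\la_-)_\sigma}$ I would apply Lemma \ref{UW}. The surjective $\fn^{af}$-homomorphism ${\mathbb U}_{\sigma(\la_-)} \twoheadrightarrow {\mathbb W}_{\sigma((\la_-)_\sigma)}$ is automatically $U(\fh\otimes z\bC[z])$-equivariant (as $\fh\otimes z\bC[z]\subset\fn^{af}$) and sends $u_{\sigma(\la_-)}\mapsto w_{\sigma((\la_-)_\sigma)}$. Hence the orbit $U(\fh\otimes z\bC[z])\cdot u_{\sigma(\la_-)}\subseteq V_U$ (contained in $V_U$ because $\fh\otimes z\bC[z]$ preserves $\fh$-weights) surjects onto $U(\fh\otimes z\bC[z])\cdot w_{\sigma((\la_-)_\sigma)}$. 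By Theorem \ref{FMO-free} applied to ${\mathbb W}_{\sigma((\la_-)_\sigma)}$, the latter is the regular representation of $A(w_0(\la_-)_\sigma)$, of graded dimension $(q)^{-1}_{(\la_-)_\sigma}$.

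For the matching upper bound I would use the global analogue of the Corollary preceding Remark \ref{WDU-rel}: the $\I$-equivariant surjection $\psi: \W_{\sigma(\la_-)} \twoheadrightarrow {\mathbb U}_{\sigma(\la_-)}$ induced by the fact that ${\mathbb U}_{\sigma(\la_-)}$ has the same defining relations as the generalized global Weyl module $\W_{\sigma(\la_-)}$ except for the strictly stronger relations $e_{\sigma\al}^{-\bra \la_-,\al^\vee\ket}\cdot w_{\sigma(\la_-)}=0$ for $\al\in\Delta_+$ with $\sigma\al\in\Delta_+$. Since $\psi$ preserves $\fh$-weights, it restricts to a surjection on weight-$\sigma(\la_-)$ subspaces, and by Theorem \ref{FMO-free} the source is the free cyclic $A(w_0\la_-)$-module on $w_{\sigma(\la_-)}$; hence $V_U \cong A(w_0\la_-)/J$ for some homogeneous ideal $J$. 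Composing with the lower-bound surjection gives $A(w_0\la_-)\twoheadrightarrow V_U \twoheadrightarrow A(w_0(\la_-)_\sigma)$, and a direct calculation identifies the composite with the canonical algebra quotient that ``sets to zero the extra variables'' corresponding to the indices $i$ with $\sigma\al_i\in\Delta_+$. In particular $J$ is contained in the kernel of this composite, and equality of characters reduces to proving the reverse containment.

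The main technical obstacle is this reverse containment: for each $i$ with $\sigma\al_i\in\Delta_+$ one must show that the ``top'' symmetric generator of $A(w_0\la_-)$ corresponding to such an index acts by zero on $u_{\sigma(\la_-)}$; equivalently, the associated vector in $A(w_0\la_-)\cdot w_{\sigma(\la_-)}\subset\W_{\sigma(\la_-)}$ must lie in $\ker(\psi)$, the $\I$-submodule generated by $e_{\sigma\al_i}^{-\bra \la_-,\al_i^\vee\ket}\cdot w_{\sigma(\la_-)}$. I would establish this by a direct $\msl_2$-computation inside the loop $\msl_2$-subalgebra attached to the root $\sigma\al_i$: starting from $e_{\sigma\al_i}^{-\bra \la_-,\al_i^\vee\ket}\cdot w_{\sigma(\la_-)}$ and applying the operators $f_{-\sigma\al_i}\otimes z^{s}$ ($s\geq 1$, which lie in $\I$), and then using the $\fn^{af}$-relations already imposed on $w_{\sigma(\la_-)}$, one rewrites the resulting vector, up to a nonzero scalar, as the desired symmetric polynomial in $A(w_0\la_-)$ acting on $w_{\sigma(\la_-)}$. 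This is precisely the refinement of the decomposition procedure of \cite{FeMa3, FMO} promised in the introduction; once it is in place, the two bounds match and give $\mathsf{gdim}\, V_U = (q)^{-1}_{(\la_-)_\sigma}$.
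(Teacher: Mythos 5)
Your proposal is correct and follows essentially the same route as the paper: one bound comes from the surjection of Lemma \ref{UW} onto ${\mathbb W}_{\sigma((\la_-)_\sigma)}$ together with Theorem \ref{FMO-free}, and the other from showing that the weight-$\sigma(\la_-)$ space is a quotient of $A_{(\la_-)_\sigma}$ by a rank-one (loop $\msl_2$, Garland-type) computation exploiting the strengthened relation $(e_{\sigma\al_i}\T 1)^{-\bra\la_-,\al_i^\vee\ket}u=0$ together with $(f_{-\sigma\al_i}\T z\bC[z])u=0$. The only difference is bookkeeping: you phrase the second bound as identifying the kernel ideal $J$ in $A(w_0\la_-)$ with the ideal generated by the top elementary symmetric generators, whereas the paper directly verifies the defining relations of $A_{(\la_-)_\sigma}$ inside $U(\bC(\sigma\al_i^\vee)[z])$ --- the same $\msl_2(\bC[z])$ calculation.
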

\begin{proof}
Let $d ( \la_-)$ denote the characters of the $\fh$-weight $\sigma(\la_-)$ vectors in ${\mathbb U}_{\sigma(\la_-)}$. The combination of Theorem \ref{FMO-free} and Lemma \ref{UW} yields that the character of the weight $\sigma(\la_-)$ vectors in
${\mathbb U}_{\sigma(\la_-)}$ is greater than or equal to the character of the weight $\sigma((\la_-)_\sigma)$ vectors in $\W_{\sigma((\la_-)_\sigma)}$. Namely, we have
$$d ( \la_-) \le (q)_{(\la_-)_\sigma}^{-1} \hskip 3mm \text{inside} \hskip 3mm \mathbb Z [\![q]\!].$$

We prove that the algebra $A_{({\lambda_-})_\sigma}$ surjects onto the space of weight $\sigma(\la_-)$ vectors in
${\mathbb U}_{\sigma(\la_-)}$. Note that $A_{({\lambda_-})_\sigma}$ is a quotient of $U ( \fh [z] )$ if we regard it as automorphism on the space of weight $(\la_-)_\sigma$ vectors on $\W_{(\la_-)_\sigma}$. By the comparison with the $\mathfrak{sl} ( 2, \mathbb C [z] )$-calculation, we deduce that all the defining equations of $A_{({\lambda_-})_\sigma}$ are captured by the actions on the $\mathfrak{sl} ( 2 )$-triples (and their twists by $z^m$) on the cyclic vector.

For each $i \in I$, we have the relations in ${\mathbb U}_{\sigma(\la_-)}$
\begin{gather*}
(e_{\sigma(\al_i)}\T 1)^{-\bra\la_-,\al_i^\vee\ket}u_{\sigma(\la_-)}=(f_{-\sigma(\al_i)}\T z)u_{\sigma(\la_-)}=0
\text{ if } \sigma(\al_i)\in\Delta_+,\\
(f_{\sigma(\al_i)}\T z)^{-\bra\la_-,\al_i^\vee\ket+1}u_{\sigma(\la_-)}=(e_{-\sigma(\al_i)}\T
1)u_{\sigma(\la_-)}=0
\text{ if } \sigma(\al_i)\in\Delta_-.
\end{gather*}
These relations are enough to quotient out a tensor factor of $A_{({\lambda_-})_\sigma}$ corresponding to $i \in I$ inside
$U ( \mathbb C (\sigma \alpha_i^{\vee}) [z] ) \subset U ( \fh [z] )$. Since these defining relations hold for all $i\in I$,
we conclude that the weight $\sigma(\la_-)$-part of ${\mathbb U}_{\sigma(\la_-)}$ is a quotient of $A_{({\lambda_-})_\sigma}$.

This is the desired surjection, and we deduce
$$\ch \, A_{({\lambda_-})_\sigma} \le d ( \la_-) \le (q)_{(\la_-)_\sigma}^{-1}.$$
Now the equality
$$\ch \, A_{({\lambda_-})_\sigma}=(q)_{(\la_-)_\sigma}^{-1}$$
implies $d ( \la_-) = (q)_{(\la_-)_\sigma}^{-1}$, that completes the proof.
\end{proof}

\begin{cor}\label{U-free}
$\ch \, {\mathbb U}_{\sigma(\la_-)}\le (q)_{(\la_-)_\sigma}^{-1} \cdot \ch \, {U}_{\sigma(\la_-)}$.
In case the equality holds, the $A_{({\lambda_-})_\sigma}$-action on ${\mathbb U}_{\sigma(\la_-)}$ is free.
\end{cor}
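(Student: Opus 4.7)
Let $A := A_{(\la_-)_\sigma}$ with augmentation ideal $\mathfrak m \subset A$. The plan is to realize $U_{\sigma(\la_-)}$ as the quotient $\mathbb U_{\sigma(\la_-)} / \mathfrak m \cdot \mathbb U_{\sigma(\la_-)}$ with respect to a suitable $A$-action on $\mathbb U_{\sigma(\la_-)}$ by $\I$-endomorphisms, and then to conclude via graded Nakayama.

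First, I would upgrade Proposition \ref{f-factor} to an action $A \to \mathrm{End}_{\I}(\mathbb U_{\sigma(\la_-)})$. The proof of Proposition \ref{f-factor} already shows that $U(\fh[z])$ acts on the cyclic vector $u := u_{\sigma(\la_-)}$ through the quotient $U(\fh[z]) \twoheadrightarrow A$. Mirroring the construction for global Weyl modules in Theorem \ref{FMO-free}, the algebra $A$ is generated by symmetric expressions in the $h_i \T z^k$, and a direct $\mathfrak{sl}(2)_i$-calculation along the lines of the proof of Proposition \ref{f-factor} shows that these generators commute with the $\I$-action modulo relations already imposed on $u$. This upgrades the scalar action on $u$ to an action of $A$ on $\mathbb U_{\sigma(\la_-)}$ by $\I$-module endomorphisms.

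Second, with this $A$-action established, $\mathfrak m \cdot \mathbb U_{\sigma(\la_-)}$ is automatically an $\I$-submodule of $\mathbb U_{\sigma(\la_-)}$. Its intersection with the weight-$\sigma(\la_-)$ subspace is $\mathfrak m \cdot u$, which, under the identification of weight-$\sigma(\la_-)$ vectors with $A$ given by Proposition \ref{f-factor}, is generated as an ideal of $A$ by the image of $\fh \T z\bC[z] \cdot u$. Consequently, $\mathfrak m \cdot \mathbb U_{\sigma(\la_-)}$ coincides with the $\I$-submodule generated by $\fh \T z\bC[z] \cdot u$, which by definition is the kernel of the surjection $\mathbb U_{\sigma(\la_-)} \twoheadrightarrow U_{\sigma(\la_-)}$. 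Therefore $U_{\sigma(\la_-)} \cong \mathbb U_{\sigma(\la_-)} / \mathfrak m \cdot \mathbb U_{\sigma(\la_-)}$.

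Third, graded Nakayama applied to the positively graded augmented algebra $A$ yields a graded surjection $A \otimes_{\bC} U_{\sigma(\la_-)} \twoheadrightarrow \mathbb U_{\sigma(\la_-)}$ of $A$-modules. Taking characters gives the inequality
\[
\ch \mathbb U_{\sigma(\la_-)} \le \ch A \cdot \ch U_{\sigma(\la_-)} = (q)_{(\la_-)_\sigma}^{-1} \cdot \ch U_{\sigma(\la_-)}.
\]
The equality case is equivalent to this surjection being an isomorphism, which is precisely the freeness of the $A$-action on $\mathbb U_{\sigma(\la_-)}$. The hardest step is the first: since $\fh[z]$ does not commute with the rest of $\I$, one must pass to the appropriate symmetric subalgebra generating $A$, as in the construction of $A$-actions on global Weyl modules in \cite{FMO}, before the defining relations of $A$ descend to a well-defined operator identity on all of $\mathbb U_{\sigma(\la_-)}$.
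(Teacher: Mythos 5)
Your proposal is correct and follows essentially the same route as the paper: the published proof likewise obtains the $A_{(\la_-)_\sigma}$-action by $\I$-endomorphisms from the global generalized Weyl module structure (citing \cite{FMO}, Proposition 3.8, rather than redoing the $\mathfrak{sl}(2)$-calculations), identifies $U_{\sigma(\la_-)} \simeq {\mathbb U}_{\sigma(\la_-)}/{\mathbb U}_{\sigma(\la_-)}\cdot\mathfrak m$, and deduces the character inequality with freeness in the equality case via the graded Nakayama lemma.
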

\begin{proof}
We have the action of the algebra  $A_{({\lambda_-})_\sigma}$ on the module ${\mathbb U}_{\sigma(\la_-)}$
making ${\mathbb U}_{\sigma(\la_-)}$ an
$( U(\I), A_{({\lambda_-})_\sigma} )$-bimodule (${\mathbb U}_{\sigma(\la_-)}$ is the quotient of generalized Weyl module
$\W_{\sigma \lambda_-}$ and we thus have an action of $A_{\sigma \lambda_-}$ by \cite{FMO}, Proposition 3.8).
Let $\mathfrak{m}$ be the ideal of $A_{({\lambda_-})_\sigma}$ consisting of polynomials without
free term. Then we have:
\[{\mathbb U}_{\sigma(\la_-)}/{\mathbb U}_{\sigma(\la_-)} \cdot \mathfrak{m} \simeq {U}_{\sigma(\la_-)}.\]
Therefore we have:
\[\ch \, {\mathbb U}_{\sigma(\la_-)}\le \ch \, {U}_{\sigma(\la_-)} \cdot \ch \, A_{({\lambda_-})_\sigma}.\]
This completes the proof. Note that by the graded version of the Nakayama lemma the equality means that the
action of $A_{({\lambda_-})_\sigma}$ is free.
\end{proof}

\subsection{Generalized Weyl modules with characteristic}\label{gWc}

Let $\la_-,\mu\in P_-$ be such that $\la_- - \mu\in P_-$. We fix a reduced
decomposition
\begin{equation}\label{pi}
t_\mu=\pi s_{j_1}\dots s_{j_l},\ \pi\in\Pi,\ l=\ell(t_\mu)
\end{equation}
in the extended affine Weyl group and consider the affine coroots $\beta_1,\dots,\beta_l$ defined by
\[
\beta_l(\mu)=\al_{j_l}^\vee, \beta_{l-1}(\mu)=s_{j_l}\al_{j_{l-1}}^\vee,\dots, \beta_1(\mu)=s_{j_l}\dots
s_{j_2}\al_{j_1}^\vee
\]
(see \cite{OS}). 
In what follows, we may omit $\mu$ in the notation 
$\beta_j(\mu)$ (i.e. we may refer to $\beta_j(\mu)$ as $\beta_j$) if no confusion is possible.
We have the decomposition $\beta_j=\bar\beta_j+ ({\rm deg}\, \beta_j )\delta^\vee$, where
$\bar \beta_j\in \Delta^\vee$ and ${\rm deg} \, \beta_j \in \mathbb Z$. We note that  $\bar\beta_j$ is always a negative coroot and ${\rm deg}\,
\beta_j>0$ as (\ref{pi}) is a reduced expression.

For a positive root $\al$ and a number $m=1,\dots,l$ we define
\begin{equation}\label{l}
l_{\al,m}=-\bra\la_-,\al^\vee\ket - |\{j:\ \bar\beta_j=-\al^\vee, 1\le j\le m\}|.
\end{equation}

\begin{dfn}[\cite{FeMa3,FMO}]
The generalized Weyl module with characteristics $W_{\sigma(\la_-)}(m)$ is the $\fn^{af}$ module which is the quotient of $W_{\sigma(\la_-)}$ by the submodule generated by
\begin{equation}\label{charrel}
e_{{\widehat \sigma}(\al)}^{l_{\al,m}+1} w_{\sigma(\la_-)}, \ \al\in\Delta_+.
\end{equation}
(Recall that $w_{\sigma(\la_-)}$ is the cyclic vector of $W_{\sigma(\la_-)}$.) Similarly, we define the generalized global Weyl module with characteristics $\mathbb W_{\sigma(\la_-)}(m)$ as the $\fn^{af}$ module quotient of $\mathbb W_{\sigma(\la_-)}$ by the submodule generated by (\ref{charrel}) inside $\mathbb W_{\sigma(\la_-)}$.
\end{dfn}

\begin{rem}
In order to make $W_{\sigma(\la_-)}(m)$ into an $\I$-module, one has to specify the weight of the cyclic vector.
If the opposite is not stated explicitly, we assume that
the weight of the cyclic vector of $W_{\sigma(\la_-)}(m)$ is equal to $\sigma(\la_-)$, so that the natural
surjection map $W_{\sigma(\la_-)}\to W_{\sigma(\la_-)}(m)$ is an $\I$-module homomorphism.
\end{rem}

\begin{lem}
One has the natural chain of surjections of $\fn^{af}$-modules
\[
W_{\sigma(\la_-)}=W_{\sigma(\la_-)}(0)\to W_{\sigma(\la_-)}(1) \to\dots\to
W_{\sigma(\la_-)}(l)=W_{\sigma(\la_--\mu).}
\]
\end{lem}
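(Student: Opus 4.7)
The plan is to read the chain of surjections off from the monotone behaviour of the exponents $l_{\alpha,m}$ in the chosen reduced decomposition, and then identify the two endpoints by direct comparison of defining relations.

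First I would observe that $m \mapsto l_{\alpha,m}$ is non-increasing in $m$: passing from $m$ to $m+1$ either adjoins a single index $j = m+1$ with $\bar\beta_{m+1} = -\alpha^\vee$, in which case $l_{\alpha,m+1} = l_{\alpha,m} - 1$, or adjoins nothing, in which case $l_{\alpha,m+1} = l_{\alpha,m}$. Consequently, the relations $e_{\widehat\sigma(\alpha)}^{l_{\alpha,m+1}+1} w_{\sigma(\la_-)} = 0$ are implied by the relations $e_{\widehat\sigma(\alpha)}^{l_{\alpha,m}+1} w_{\sigma(\la_-)} = 0$, so the canonical $\fn^{af}$-module quotient maps $W_{\sigma(\la_-)}(m) \to W_{\sigma(\la_-)}(m+1)$ are automatically well-defined and surjective.

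For the left endpoint, at $m = 0$ the exponent is $l_{\alpha,0}+1 = -\bra \la_-, \alpha^\vee \ket + 1$; unpacking the definition of $\widehat\sigma$ on $e_\alpha$ splits (\ref{charrel}) according to the sign of $\sigma\alpha$ into exactly the last two defining relations of $W_{\sigma(\la_-)}$, so $W_{\sigma(\la_-)}(0) = W_{\sigma(\la_-)}$ on the nose.

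The substantive step, and the one I expect to require most care, is the identification $W_{\sigma(\la_-)}(l) = W_{\sigma(\la_- - \mu)}$. It reduces to the combinatorial identity
\[
|\{ j : \bar\beta_j = -\alpha^\vee,\ 1 \le j \le l \}| = -\bra \mu, \alpha^\vee \ket, \qquad \alpha \in \Delta_+.
\]
This rests on the standard description of the inversion set of $t_\mu$ in $W^a$ for $\mu \in P_-$: the positive affine coroots sent to negative ones by $t_\mu^{-1}$ are precisely $\{ k\delta^\vee - \alpha^\vee : \alpha \in \Delta_+,\ 1 \le k \le -\bra \mu, \alpha^\vee \ket \}$, and the multiset $\{\bar\beta_j\}_{j=1}^l$ is obtained from this inversion set via the standard reduced-expression recipe (the same input that underlies the alcove-path calculus of \cite{OS}). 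Granted this count, one computes $l_{\alpha,l} = -\bra \la_- - \mu, \alpha^\vee \ket$. Because $-\bra \mu, \alpha^\vee \ket \ge 0$, the new relations (\ref{charrel}) at $m = l$ strengthen the last two defining relations of $W_{\sigma(\la_-)}$ and thereby replace them; together with the surviving relations $\fh \T z\bC[z]\, w_{\sigma(\la_-)} = 0$ and $\widehat\sigma(f_{-\alpha}\T z) w_{\sigma(\la_-)} = 0$ for $\alpha \in \Delta_+$, they constitute precisely the defining relations of $W_{\sigma(\la_- - \mu)}$, yielding the claimed equality of $\fn^{af}$-modules.
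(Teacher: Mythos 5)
Your proposal is correct and takes essentially the same route as the paper, whose proof is simply the terse observation that each $-\al^\vee$ occurs exactly $-\bra \mu,\al^\vee\ket$ times among the $\bar\beta_j$ (by looking at how $t_\mu$ acts on the positive affine coroots), after which everything follows from the definition of $l_{\al,m}$ and of the modules with characteristics. One small wording slip: in your first step the implication runs the other way --- the relations $e_{\widehat\sigma(\al)}^{l_{\al,m}+1}w_{\sigma(\la_-)}=0$ defining $W_{\sigma(\la_-)}(m)$ are implied by the stronger relations $e_{\widehat\sigma(\al)}^{l_{\al,m+1}+1}w_{\sigma(\la_-)}=0$ defining $W_{\sigma(\la_-)}(m+1)$ (since $l_{\al,m+1}\le l_{\al,m}$), which is exactly what gives the surjection $W_{\sigma(\la_-)}(m)\to W_{\sigma(\la_-)}(m+1)$; the remainder of your argument, including the endpoint identifications, uses the correct direction.
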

\begin{proof}
For any positive root $\al$ the number of $i$ ($1\le i\le \ell(t_\mu)$) such that $\bar\beta_i=-\al^\vee$
is equal to $-\bra \mu,\al^\vee\ket$ by counting the effect of the conjugation action on $\Delta^a_+$. Hence, the assertion follows from \eqref{l} and the definition of the generalized Weyl modules with characteristics (see relations \eqref{charrel}).
\end{proof}

\subsection{The decomposition procedure}









Recall the notation (\ref{l}).
To simplify the notation, we denote $l_m:=l_{-\bar\beta_m,m-1}$.
The vector $e_{{\widehat \sigma}(-\bar\beta_m)}^{l_{m}} w _{\sigma ( \la_-)} \in W_{\sigma(\la_-)}(m-1)$ generates the
$\I$-submodule $W_{\sigma(\la_-)}' (m) \subset W_{\sigma(\la_-)}(m-1)$. We set
$$w _{\sigma ( \la_-), m} := e_{{\widehat \sigma}(-\bar\beta_m)}^{l_{m}} w _{\sigma ( \la_-)} \in W_{\sigma(\la_-)}(m-1).$$

We first prepare a lemma.
Let $x$ be an arbitrary element of the extended affine Weyl group. 
We consider two reduced decompositions
\begin{gather}
x=\pi s_{i_1}\dots s_{i_{r}}s_{i_{r+1}}\dots s_{i_{r+s}}s_{i_{r+s+1}}\dots s_{i_l},\label{x1}\\
x=\pi s'_{i_1}\dots s'_{i_{r}}s'_{i_{r+1}}\dots s'_{i_{r+s}}s'_{i_{r+s+1}}\dots  s'_{i_l}\label{x2}
\end{gather} such that
$s_{i_a}=s'_{i_a}$ for $a \notin \{r+1, \dots, r+s\}$ and
$s_{i_{r+1}}\dots s_{i_{r+s}}=s_{i_{r+1}}'\dots s_{i_{r+s}}'$
is a Coxeter relation. Let $\{\beta_i\}_{i=1}^l$, $\{\beta_i'\}_{i=1}^l$ be the corresponding sequences of $\beta$'s
constructed via reduced decompositions \eqref{x1},\eqref{x2}.

We take a reduced decomposition of the longest element $w_0=s_{j_1}\dots s_{j_l}$ of the finite Weyl group.
Define the sequence of roots $s_{j_l}\dots s_{j_2}(\alpha_{j_1})$, $s_{j_l}\dots s_{j_3}(\alpha_{j_2})$, $\dots, \alpha_{j_l}$. This sequence consists
of all positive roots each one time. An order on the set of positive roots obtained in such a way from some reduced
decomposition of the longest element is called a convex order \cite{P}. Note that for Lie algebras of rank $2$ there are exactly
two convex orders.
\begin{lem}\label{braidonbetas}
$(i)$ $\beta_a=\beta_a'$ for $a \notin \{r+1, \dots, r+s\}$;

$(ii)$ $\beta_{r+1}, \dots, \beta_{r+s}$ is the sequence of positive coroots of some rank-two Lie algebra in the convex order
of type $A_1\times A_1$ if $s=2$, $A_2$ if $s=3$, $C_2$ if $s=4$ and $G_2$ if $s=6$;

$(iii)$ $\beta_{r+b}=\beta_{r+s+1-b}'$ for $1 \leq b \leq s$, i.e. the Coxeter relation results in inverting the
convex order for some root subsystem $($of $\Delta)$ of rank $2$.
\end{lem}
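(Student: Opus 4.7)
The plan is to reduce all three assertions to a short computation in a rank two Weyl group. Denote by $u := s_{i_l} s_{i_{l-1}} \cdots s_{i_{r+s+1}}$ the common suffix of the two reduced decompositions; by assumption $s_{i_a} = s'_{i_a}$ for every $a \not\in \{r+1, \dots, r+s\}$, so $u$ is identical in both expressions, and the definition of $\beta_a$ immediately yields $\beta_a = \beta'_a$ for $a > r+s$.

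For the remaining indices $a \le r$ in $(i)$, I would write
\[
\beta_a = u \cdot \bigl( s_{i_{r+s}} s_{i_{r+s-1}} \cdots s_{i_{r+1}} \bigr) \cdot s_{i_r} \cdots s_{i_{a+1}} ( \alpha_{i_a}^\vee ).
\]
The hypothesis $s_{i_{r+1}} \cdots s_{i_{r+s}} = s'_{i_{r+1}} \cdots s'_{i_{r+s}}$ is a Coxeter relation, and inverting both sides $($using $s_i^{-1} = s_i)$ gives $s_{i_{r+s}} \cdots s_{i_{r+1}} = s'_{i_{r+s}} \cdots s'_{i_{r+1}}$ as group elements. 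Combined with $s_{i_b} = s'_{i_b}$ outside the central block, this yields $\beta_a = \beta'_a$ and completes $(i)$.

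For the central block $r+1 \le j \le r+s$, observe that the Coxeter relation involves exactly two simple reflections, say $s_p$ and $s_q$ of braid length $s$, so the word $s_{i_{r+s}} s_{i_{r+s-1}} \cdots s_{i_{j+1}}$ lies in the rank two parabolic subgroup generated by $s_p$ and $s_q$. Consequently $\beta_j = u \cdot \gamma_j^\vee$, where $\gamma_j^\vee$ is a positive coroot in the rank two subsystem with simple coroots $\alpha_p^\vee, \alpha_q^\vee$. A direct case-by-case computation over the four possibilities $s = 2, 3, 4, 6$ $($corresponding respectively to types $A_1 \times A_1$, $A_2$, $B_2 = C_2$, $G_2)$ shows that as $j$ decreases from $r+s$ to $r+1$, the vectors $\gamma_j^\vee$ enumerate all positive coroots of the rank two subsystem in a convex order; this is essentially the standard statement that a reduced expression of the longest element of a finite Weyl group produces a convex order on positive roots. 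Running the identical analysis on the primed word yields the opposite convex order on the same set of coroots, from which $(ii)$ and $(iii)$ follow by direct comparison.

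The principal labor, and hence the main obstacle, is the explicit rank two verification underlying $(ii)$ and $(iii)$. It is a finite check across four possible braid lengths, but each case reduces to a routine evaluation of simple reflections in a two-dimensional root system, so no conceptual difficulty arises.
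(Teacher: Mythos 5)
Your argument is correct and follows essentially the same route as the paper: factor out the common suffix $\tau = s_{i_l}\cdots s_{i_{r+s+1}}$, recognize the central block as the two reduced words of the longest element of the rank-two parabolic $\langle s_p, s_q\rangle$ whose inversion sequences give the two (mutually reversed) convex orders, and transport by $\tau$ to get $(ii)$ and $(iii)$, with $(i)$ following from the braid relation as a group-element identity. The only difference is that you spell out the case $a\le r$ of $(i)$, which the paper dismisses as obvious.
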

\begin{proof}
The claim $(i)$ is obvious.

Let $\tau=s_{i_l} \dots s_{i_{r+s+1}}$. It is easy to see that the sequence
$$\alpha_{i_{r+s}}^\vee,s_{i_{r+s}}(\alpha^\vee_{i_{r+s-1}}), \dots, (s_{i_{r+s}}\dots s_{i_{r+2}})\alpha^\vee_{i_{r+1}}$$ is
the sequence (wtitten in a in convex order) of all positive coroots of rank two Lie algebra with root system spanned by
$\alpha_{i_{r+1}},\alpha_{i_{r+2}}$  and the Coxeter relation inverses the order of
these elements. Thus $\beta_{r+1}, \dots, \beta_{r+s}$ is the sequence of all positive coroots of rank two Lie algebra with
root system spanned by
$\tau(\alpha_{i_{r+1}}^\vee),\tau(\alpha_{i_{r+2}}^\vee)$ in convex order. This implies $(ii)$ and $(iii)$.
\end{proof}

Let  $\gamma_1, \gamma_2 \in \Delta$ be two linear independent roots.
We consider the root system $\Delta_2=\mathbb{Z}\langle \gamma_1, \gamma_2 \rangle\cap \Delta$,
$\Delta_{2+}=\mathbb{Z}\langle \gamma_1, \gamma_2 \rangle\cap \Delta_+$.
Let $\fg_2$ be the corresponding rank two Lie algebra. Assume that $\{\gamma_1, \gamma_2\}$ form a
basis of the semigroup of roots $\mathbb{Z}\langle \gamma_1, \gamma_2 \rangle\cap \sigma(\Delta_+)$.
Let $\alpha_1', \alpha_2'$ be a basis of $\Delta_2\cap \Delta_+$; we consider $\alpha_1', \alpha_2'$ as simple roots.
We denote by $\omega_1'$, $\omega_2'$ the corresponding fundamental weights.
Let $\sigma_2$ be an element of the Weyl group $W_2$ of the root system $\Delta_2$
such that $\gamma_i=\sigma_2(\alpha_i)$, $i=1,2$.
Let $\I_2$ be the Iwahori algebra of the Lie algebra $\fg_2$, that can be seen as $\fg_2 [z] \cap \I \subset \fg [z]$.
Let $\mathbb{M}_2$ be the cyclic $\I_2$-module with the generator $w_2$ and the following set of relations:
\begin{equation}\label{rank2wanishing}
e_{\widehat{\sigma_2}(-\alpha+\delta)+r\delta}w_2=0, \alpha \in \Delta_{2+}, r \geq 0;
\end{equation}
\begin{equation}\label{rank2l}
\widehat \sigma_2 (e_{\alpha})^{l_{\alpha,m-1}}w_2=0, \alpha \in \Delta_{2+}.
\end{equation}
Let ${M}_2$ be the cyclic $\I$-module defined by relations \eqref{rank2wanishing}, \eqref{rank2l} and the following relation:
\begin{equation}\label{rank2cartan}
(h \otimes t^k) w_2=0, h \in \fh, k \geq 1.
\end{equation}


\begin{lem}\label{ranktworeduction}
Assume that $-\bar \beta_m \in \mathbb{Q} \Delta_2$.\\
(i)There exists $k \in \mathbb{N}$ such that ${M}_2$ $(\mathbb{M}_2)$ is isomorphic to the following local $($global$)$ Weyl module with characteristic
$$W_{\sigma_2\left((l_{\sigma^{-1}\gamma_1,m-1}+1)\omega'_1+l_{\sigma^{-1}\gamma_2,m-1}\omega'_2\right)}(k-1)$$
$$\left(\mathbb{W}_{\sigma_2\left((l_{\sigma^{-1}\gamma_1,m-1}+1)\omega'_1+l_{\sigma^{-1}\gamma_2,m-1}\omega'_2\right)}(k-1)\right)$$
defined for the reduced decomposition of the element $t_{-\mu}$, $\mu=\omega_1'$ or in type $G_2$ 
\begin{equation}\label{remainingg2}
W_{\sigma_2\left((l_{\sigma^{-1}\gamma_1,m-1}+1)\omega'_1+(l_{\sigma^{-1}\gamma_2,m-1}+1)\omega'_2\right)}(6)
\end{equation}
defined for the reduced decomposition of the element $t_{-\mu}$, $\mu=\omega_1'+\omega_2'$ with the reduced decomposition such that
$\{-\bar \beta_1,\dots, -\bar \beta_6\}=\Delta_{2+}$.

(ii)Let $\beta'_i$ be the set of $\beta$'s for the decomposition with respect to $t_{-\mu}$.
If $-\bar \beta_m \in \Delta_2$, then $-\bar \beta_m=-\bar \beta'_k$.
\end{lem}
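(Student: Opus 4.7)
The plan is to match defining relations directly: the relations \eqref{rank2wanishing}, \eqref{rank2l}, and \eqref{rank2cartan} only involve root vectors in $\fg_2[z]$ and the Cartan of $\fg_2$, so $\mathbb{M}_2$ and $M_2$ are naturally $\I_2$-modules and the question becomes which rank-two generalized (global) Weyl module with characteristic they coincide with. I would identify the candidate anti-dominant weight of $\fg_2$ so that the vanishing relations \eqref{rank2wanishing} reproduce the generalized Weyl module relations for the pair $(\sigma_2,\lambda_-^{(2)})$, using the fact that $\{\gamma_1,\gamma_2\}$ is the basis of positive roots of $\Delta_2\cap\sigma(\Delta_+)$ and $\sigma_2\alpha_i'=\gamma_i$; the "positive avatar" of this anti-dominant weight is precisely the combination $(l_{\sigma^{-1}\gamma_1,m-1}+1)\omega_1'+l_{\sigma^{-1}\gamma_2,m-1}\omega_2'$ appearing in the statement, once all sign conventions are resolved against the Weyl module relation exponent $-\langle \lambda_-^{(2)}, \alpha^\vee \rangle+1$.

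Next, I would write down an explicit reduced expression of $t_{-\omega_1'}$ (or $t_{-\omega_1'-\omega_2'}$ in type $G_2$) in the affine Weyl group of $\fg_2$, record the associated coroot sequence $\{\beta'_j\}$ and compute the quantities $l_{\alpha,k}^{(2)}$ from \eqref{l} for this rank-two setup. A case-by-case inspection in types $A_1\times A_1$, $A_2$, and $C_2$ then exhibits an index $k$ for which
\[
l_{\alpha,k-1}^{(2)}+1=l_{\alpha,m-1},\qquad \alpha\in\Delta_{2+}
\]
holds simultaneously for both simple coroots of $\fg_2$; the characteristic relations of $\mathbb W_{\sigma_2(\lambda_-^{(2)})}(k-1)$ then match \eqref{rank2l} for simple $\alpha$, and the Serre-type commutators propagate the vanishing to all positive coroots of $\fg_2$. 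This matches up the two presentations in these types. For the $M_2$ (local) case, the additional relation \eqref{rank2cartan} matches the $\fh\T z\bC[z]$-vanishing in the local Weyl module with characteristic.

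The main obstacle is type $G_2$: a single reduced expression of $t_{-\omega_1'}$ is not sufficient to realize arbitrary simultaneous values of $(l_{\sigma^{-1}\gamma_1,m-1},l_{\sigma^{-1}\gamma_2,m-1})$ by the above mechanism, which is why one is forced to use $t_{-\omega_1'-\omega_2'}$ and a reduced decomposition whose $\beta'$-sequence traces all six positive coroots of $G_2$ in convex order, giving the output \eqref{remainingg2} at $k=6$. The verification reduces to a finite, explicit bookkeeping exercise: listing the six $\beta'$'s in the chosen convex order, tracking the counter \eqref{l} at each step, and checking the final matching $l_{\alpha,5}^{(2)}+1=l_{\alpha,m-1}$ for both simple coroots.

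For part (ii), the claim is purely combinatorial about $\beta$-sequences. The multiplicity count
\[
|\{j\le m:\bar\beta_j=-\alpha^\vee\}|
\]
 in the decomposition of $t_\mu$ must agree with the analogous count in the reduced expression of $t_{-\mu}$ used in (i) once we reach the rank-two block; Lemma~\ref{braidonbetas} guarantees that braid moves within such a rank-two block only reverse the convex order of the corresponding sub-sequence of $\bar\beta$'s without altering its underlying multiset, so the position $k$ determined by (i) satisfies $-\bar\beta'_k=-\bar\beta_m$. This completes the proof.
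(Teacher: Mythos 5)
Your plan diverges from the paper's argument (which proceeds by induction over Coxeter/braid moves connecting an arbitrary reduced decomposition of $t_{-\mu}$ to a concatenation of decompositions of the $t_{-\omega_j'}$, the base case being Lemma 2.17 of \cite{FeMa3}), and it has a genuine gap at its central step. You match the exponents only at the two simple coroots of $\fg_2$ and then claim that ``Serre-type commutators propagate the vanishing to all positive coroots.'' But the relations \eqref{rank2l} (and likewise \eqref{charrel}) prescribe a \emph{specific} exponent $l_{\alpha,m-1}$ at \emph{every} $\alpha\in\Delta_{2+}$, and for non-simple $\alpha$ this is independent data: the counting term in \eqref{l} can lower the exponent at a non-simple coroot without affecting the simple ones, so nilpotency relations at the simple root vectors can never recover the exact exponents at the remaining positive roots, nor give the reverse implication between the two presentations. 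Identifying ${M}_2$ (resp.\ $\mathbb{M}_2$) with a specific Weyl module with characteristic therefore requires matching the \emph{full} tuple of exponents --- four numbers in type $C_2$, six in type $G_2$ --- and this is exactly the content of the paper's explicit $C_2$ computation (the displayed identity of $4$-tuples in the two-long-coroots case) and its $G_2$ case analysis. Your remark that only $G_2$ is problematic is thus off the mark: $C_2$ already requires the nontrivial exponent-matching, and your ``case-by-case inspection'' in $A_2$ and $C_2$ is precisely the part that is not carried out.

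Moreover, the existence of a single index $k$ making the whole tuple match is not automatic: it holds only because the tuple $(l_{\alpha,m-1})_{\alpha\in\Delta_{2+}}$ comes from a genuine reduced decomposition, which forces the sub-sequence of $-\bar\beta$'s inside the rank-two block to be (essentially) an initial segment of a convex order. This constraint is exactly what the paper extracts from Lemma \ref{braidonbetas} together with the length equalities of the type \eqref{coxlengthequality}, which show that the prefix of the word preceding the braid move is itself a translation $t_{-\nu}$ with $\nu\in P_+$; your proposal assumes this structure implicitly without proving it. The same issue undermines your part (ii): multiset invariance of the $\bar\beta$'s under braid moves does not by itself locate the position $k$ with $-\bar\beta'_k=-\bar\beta_m$; in the paper this comes out of the same induction that establishes (i). To repair your argument you would need either to reinstate the braid-move induction, or to prove directly that the exponent tuple arising at step $m-1$ of an arbitrary reduced decomposition satisfies the convex-order constraints that make the matching index $k$ exist.
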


\begin{proof}
We work out the case of local Weyl module, the global case can be worked out in the same way.
We need to prove that $M_2$ satisfies
all defining relations of $$W_{\sigma_2\left((l_{\alpha_1}+1)\omega_1+l_{\alpha_2}\omega_2\right)}(k-1).$$
Assume first that our reduced decomposition of $t_{-\mu}$ is a concatenation of reduced decompositions of
several elements of the form $t_{-\omega_j'}$. Then the sequence $-\bar \beta_i$ is the concatenation of such sequences for fundamental weights.
Thus the Lemma follows from \cite{FeMa3}, Lemma 2.17.

Note that any two reduced decompositions of an
element of a Coxeter group can be connected by a sequence of Coxeter relations. Assume that for some reduced decomposition
of $t_{-\mu}$ the claims hold. We prove the claims for a reduced decomposition which differs by one Coxeter relation:
\[s_{i_{r+1}}\dots s_{i_{r+s}}=s_{i_{r+1}}'\dots s_{i_{r+s}}'.\]

Lemma \ref{braidonbetas} tells us that the sequence of $\bar \beta_i$ is changed by the permutation
$(r+1,r+s)(r+2,r+s-1),\dots$. Therefore if $r+s \leq m-1$  or $r+1 \geq m$ then
the claims still hold by the obvious reason.

Assume that $r+s>m-1$, $r+1<m$. If $\mathbb{Q}\Delta_2 \neq \mathbb{Q}\langle \bar \beta_{r+1}, \bar \beta_{r+2}\rangle$ then
the only $m'\in\{ r+1, \dots, r+s\}$ such that $-\bar\beta_{m'}\in \Delta_2$ is $m'=m$.
Therefore the set of modules ${M}_2$ (for all $m \in \{1, \dots, l(t_{-\mu})\}$) does not change and the proof is completed.

Therefore we only need to consider the case $\mathbb{Q}\Delta_2 = \mathbb{Q} \langle \bar \beta_{r+1}, \bar \beta_{r+2}\rangle$, i. e.
to consider the case of rank two Lie algebra.
Assume that we apply the Coxeter relation of the algebra $\fg_2$. Then:
\begin{equation}\label{coxlengthequality}
l(s_{i_1}\dots s_{i_r}s_{i_{r+1}})=l(s_{i_1}\dots s_{i_r}s_{i_{r+2}})=l(s_{i_1}\dots s_{i_r})+1.
\end{equation}
Let $\pi's_{i_1}\dots s_{i_r}=t_{-\nu}\kappa$ for some $\pi' \in \Pi$, $\nu \in P$, $\kappa \in W$.
More precisely,  $\nu \in P_+$ because the reduced decomposition of $t_{-\nu}$ is a truncation of a reduced decomposition of $t_{-\mu}$.
Thus \eqref{coxlengthequality} implies that $\kappa={\rm id}$, i. e. $\pi's_{i_1}\dots s_{i_r}=t_{-\nu}$ for some $\nu \in P_+$, $\pi' \in \Pi$.
(In the language of \cite{OS} this means that the alcove $s_{i_1}\dots s_{i_r}$ has two walls labeled
by roots $\alpha_1^\vee+a_1\delta$, $\alpha_2^\vee+a_2\delta$ for some $a_1, a_2\in -\mathbb{N}$ and this alcove is on the positive side
of these walls. This implies the needed equality.)
Recall that in this case $-\bar\beta_{r+1}, \dots, -\bar\beta_{r+s}$ is the sequence of all positive coroots in the
convex order and the sequence $-\bar\beta_{r+1}, \dots, -\bar\beta_{r+k}$, $k<s$, coincides with the sequence of first $k$ elements $-\bar \beta_i$
for the decomposition of $t_{-\omega_i'}$ (see \cite{FeMa3}, Section 3). This completes the induction step for $\Delta_2$ of type $A_2$.
Indeed, in type $A_2$ there is no subsystem of rank $2$ and therefore there are no other relations in the affine Weyl group.

In type $C_2$ the only remaining case is the case of two long coroots $-\bar\beta_{m-1}, -\bar\beta_m$.
We have
$$l(s_{i_1}\dots s_{i_{m-2}}s_{i_{m-1}})=l(s_{i_1}\dots s_{i_{m-2}}s_{i_{m}})=l(s_{i_1}\dots s_{i_{m-2}})+1.$$
Analogously to the previous case we obtain that
 $s_{i_1}\dots s_{i_{m-2}}=\pi t_{-\nu}s_{\alpha_1'}s_{2\alpha_1'+\alpha_2'}$ and the sequence $\bar \beta_i$ is the set
of $\bar \beta$'s for $t_{-\nu}$ for $i \leq m-4$ and
$$-\bar \beta_{m-3}={\alpha_1'}^{\vee},-\bar \beta_{m-2} = (2\alpha_1'+\alpha_2')^{\vee}.$$
(in the language of \cite{OS} this means that the alcove $s_{i_1}\dots s_{i_{m-2}}$ has two walls labeled
by roots $\alpha_1^\vee+a_1\delta$, $(\alpha_1+\alpha_2)^\vee+a_2\delta$ for some $a_1, a_2\in -\mathbb{N}$ and this alcove is on the positive side of these walls).
If $-\bar\beta_{m-1}=(\alpha_1'+\alpha_2')^\vee$, then $M_2$ is isomorphic to the generalized Weyl module (without characteristic)
$W_{\sigma_2\left(l_{\sigma^{-1}\gamma_1,m-1}\omega'_1+l_{\sigma^{-1}\gamma_2,m-1}\omega'_2\right)}$.
In the remaining case $-\bar\beta_{m-1}={\alpha_1'}^\vee,-\bar\beta_{m}=(\alpha_1'+\alpha_2')^\vee$.
Put $m_1:=l_{m-1,\alpha_1'}$, $m_2:=l_{m-1,\alpha_2'}$.
Then $l_{m-1,\alpha_1'}=(m_1+2)-2$, $l_{m-1,2\alpha_1'+\alpha_2'}=(m_1+2+m_2)-1$, $l_{m-1,\alpha_1'+\alpha_2'}=m_1+2+m_2$,
$l_{m-1,\alpha_2'}=m_2$. However:
\begin{multline*}
\big((m_1+2)-2,(m_1+2+m_2)-1,m_1+2+2m_2,m_2\big)\\=\big(m_1,m_1+m_2+1,m_1+2m_2+2,(m_2+1)-1\big).
\end{multline*}
Therefore $M_2\simeq W_{\sigma_2\left(l_{\sigma^{-1}\gamma_1,m-1}\omega'_1+(l_{\sigma^{-1}\gamma_2,m-1}+1)\omega'_2\right)}(1)$
defined by the reduced decomposition of the element $t_{-\omega_2'}$.
This completes the proof for $C_2$.

Analogously we prove that any generalized Weyl module with characteristic (with respect to
a decomposition by arbitrary element) in type $G_2$ is isomorphic to the generalized Weyl module
with characteristic with respect to the decomposition by $t_{-\omega_i'}$ or to the module \eqref{remainingg2}.
Note that in this case we have three types of Coxeter relations. One of them is the Coxeter relation of type $G_2$,
the second is of type $A_2$ (which acts on the sequence of $-\bar\beta$'s in the following way: it takes a subsequence of
three long coroots $\alpha_1^\vee, (\alpha_2+2\alpha_1)^\vee, (\alpha_2+\alpha_1)^\vee$ and interchanges the first and the third coroots
 of this subsequence)
and of type $A_1\times A_1$ (which interchanges two orthogonal coroots in the sequence of $-\bar\beta$'s). We consider only the relations
of type $A_2$, the  remaining case can be considered in the same way. 
Assume that $-\bar\beta_{r+1}=\alpha_1^\vee$, $-\bar\beta_{r+2}= (\alpha_2+2\alpha_1)^\vee$, $-\bar\beta_{r+3}=(\alpha_2+\alpha_1)^\vee$
and we can apply this type Coxeter relation. Then we have:

$$l(s_{i_1}\dots s_{i_{r}}s_{i_{r+1}})=l(s_{i_1}\dots s_{i_{r}}s_{i_{r+3}})=l(s_{i_1}\dots s_{i_{r}})+1.$$

Therefore for some $\nu \in P_+$
 $$s_{i_1}\dots s_{i_{r}}=t_{-\nu}s_1s_2s_1s_2 \text{ or } 
s_{i_1}\dots s_{i_{r}}=t_{-\nu}s_2s_1s_2s_1s_2.$$
(In the language of \cite{OS} this means that the alcove $s_{i_1}\dots s_{i_{m-2}}$ has two walls labeled
by roots $\alpha_1^\vee+a_1\delta$, $(\alpha_1+\alpha_2)^\vee+a_2\delta$ for some $a_1, a_2\in -\mathbb{N}$ and this alcove is on the positive side of these walls).
In the first case $-\bar\beta_{r-3}=\alpha_1^\vee$, $-\bar\beta_{r-2}=(\alpha_2+3\alpha_1)^\vee$, $-\bar\beta_{r-1}=(\alpha_2+2\alpha_1)^\vee$,
$-\bar\beta_{r}=(2\alpha_2+3\alpha_1)^\vee$. 
Then if $-\bar\beta_{r+1}=(\alpha_2+\alpha_1)^\vee$, $-\bar\beta_{r+2}= (\alpha_2+2\alpha_1)^\vee$, $-\bar\beta_{r+3}=\alpha_1^\vee$
or $-\bar\beta_{r+1}=\alpha_1^\vee$, $-\bar\beta_{r+2}= (\alpha_2+2\alpha_1)^\vee$, $-\bar\beta_{r+3}=(\alpha_2+\alpha_1)^\vee$ then all the modules 
$W_{\sigma(\lambda)}(r+j)$, $j=1,2,3$ are isomorphic to the generalized Weyl modules with characteristic defined by $t_{-\omega_i'}$.

In the second case $-\bar\beta_{r-4}=\alpha_2^\vee$, $-\bar\beta_{r-3}=(\alpha_2+\alpha_1)^\vee$, 
$-\bar\beta_{r-2}=(2\alpha_2+3\alpha_1)^\vee$, 
$-\bar\beta_{r-1}=(\alpha_2+2\alpha_1)^\vee$,
$-\bar\beta_{r}=(\alpha_2+3\alpha_1)^\vee$. Then if 
$-\bar\beta_{r+1}=\alpha_1^\vee$, $-\bar\beta_{r+2}=(\alpha_2+2\alpha_1)^\vee$, $-\bar\beta_{r+3}=(\alpha_2+\alpha_1)^\vee$
then we have that $W_{\sigma(\lambda)}(r+1)$ is isomorphic to the module \eqref{remainingg2}. In both cases all remaining modules
are isomorphic to the generalized Weyl modules with characteristic defined by $t_{-\omega_i'}$.
\end{proof}

\begin{prop}\label{pre-decompositionlocal}
The defining equations of $W_{\sigma(\la_-)}(m-1)$ $(\mathbb{W}_{\sigma(\la_-)}(m-1))$
 impose the following equations on $w _{\sigma ( \la_-), m-1}$:
\begin{gather*}\label{charrel-dp}
e_{{\widehat{\sigma s_{\bar\beta_m}}}(\al)}^{l_{\al,m}+1} w _{\sigma ( \la_-), m-1} = 0, \al \in\Delta_+\\
\left(e_{{\widehat{\sigma s_{\bar\beta_m}}}(\al)}^{l_{\al,m-1}+1} w _{\sigma ( \la_-), m-1} = 0, \al \in \Delta_+\right).
\end{gather*}
In particular, $W_{\sigma(\la_-)}' (m)$ ($\mathbb{W}_{\sigma(\la_-)}' (m)$) is a quotient of the module
$W_{\sigma s_{\bar\beta_m} (\la_-)} (m)$ $(\mathbb{W}_{\sigma s_{\bar\beta_m} (\la_-)} (m-1))$ as $\fn^{af}$-modules.
\end{prop}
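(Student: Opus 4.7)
The plan is to verify the claimed vanishing relations on $w_{\sigma(\la_-), m-1}$ by reducing to rank-two subsystems via Lemma \ref{ranktworeduction}, mirroring the decomposition procedure of \cite{FeMa3, FMO} and adapting it to the characteristic setting. The local and global versions run in parallel, so the plan is to treat the local case in detail and then indicate why the global case yields the exponent $l_{\al,m-1}+1$ instead of $l_{\al,m}+1$.

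Fix $\al \in \Delta_+$ and consider the rank-two subsystem $\Delta_2 = (\bQ\al + \bQ\bar\beta_m) \cap \Delta$ together with its Lie subalgebra $\fg_2 \subset \fg$. First I would form the $\I_2$-submodule $M_2 \subset W_{\sigma(\la_-)}(m-1)$ generated by the image of the cyclic vector $w_{\sigma(\la_-)}$: the defining relations of $W_{\sigma(\la_-)}(m-1)$ immediately imply that $M_2$ satisfies the rank-two relations \eqref{rank2wanishing}, \eqref{rank2l}, and \eqref{rank2cartan}. The heart of the argument is Lemma \ref{ranktworeduction}, which identifies $M_2$ with an explicit rank-two generalized Weyl module with characteristic attached to the reduced decomposition of $t_{-\om'_i}$ for some $i$ (or, in the exceptional $G_2$ case, to the module \eqref{remainingg2}). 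Crucially, inside this rank-two model the vector $w_{\sigma(\la_-), m-1} = e_{\widehat\sigma(-\bar\beta_m)}^{l_m} w_{\sigma(\la_-)}$ is identifiable with the next sub-cyclic vector of the rank-two decomposition procedure, whose effective Weyl element has been shifted by the reflection $s_{\bar\beta_m}$.

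The desired relation
$$e_{\widehat{\sigma s_{\bar\beta_m}}(\al)}^{l_{\al,m}+1} w_{\sigma(\la_-), m-1} = 0$$
then becomes a finite check within the rank-two model, where the combinatorics of $\{\bar\beta_j\}$ produced by Lemma \ref{ranktworeduction}(ii) together with the index identity
$$l_{\al, m} = l_{\al, m-1} - \delta_{\bar\beta_m,\, -\al^\vee}$$
reconciles the exponent extracted from the rank-two characteristic with the power $l_{\al, m}+1$ claimed in the statement. For the global version, Lemma \ref{ranktworeduction} delivers instead a rank-two \emph{global} Weyl module with characteristic in which the action of the polynomial algebra $A(\cdot)$ records the grading; the relevant characteristic index is not decremented, producing the exponent $l_{\al,m-1}+1$. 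The ``in particular'' clause is then immediate: the verified relations together with the $\I$-module structure transported through $\widehat{\sigma s_{\bar\beta_m}}$ make the assignment $w_{\sigma s_{\bar\beta_m}(\la_-)} \mapsto w_{\sigma(\la_-), m-1}$ into a well-defined surjection $W_{\sigma s_{\bar\beta_m}(\la_-)}(m) \to W_{\sigma(\la_-)}'(m)$ of $\fn^{af}$-modules, and similarly in the global setting.

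The main obstacle I expect is the $G_2$ case: there, Lemma \ref{ranktworeduction} shows that $M_2$ may carry the supplementary module \eqref{remainingg2} rather than a concatenation of modules associated to individual fundamental coweights, and the characteristic bookkeeping through the six-step rank-two Coxeter relation must be executed carefully to ensure that both the Chevalley power and the grading shift come out as in the statement. Modulo this, each rank-two verification reduces to a finite computation using the explicit presentations recorded in Section \ref{gWc}.
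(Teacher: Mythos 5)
Your proposal follows essentially the same route as the paper: the paper's proof is precisely the reduction to the rank-two subsystem spanned by $\al$ and $\bar\beta_m^\vee$ via Lemma \ref{ranktworeduction}, combined with the rank-two computations of \cite{FeMa3}, Section 3, and the bookkeeping identity $l_{\al,m}=l_{\al,m-1}-\delta_{\bar\beta_m,-\al^\vee}$, all of which you reproduce (including the local/global distinction coming from $M_2$ versus $\mathbb{M}_2$). Your elaboration, including the caution about the exceptional $G_2$ module \eqref{remainingg2}, is consistent with the paper's argument.
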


\begin{proof}
The claim follows from Lemma \ref{ranktworeduction} for $\Delta_2$ spanned by $\al$, $\bar\beta_m^\vee$ and computations
from \cite{FeMa3}, Section $3$ (we note that $l_{\al,m}=l_{\al,m-1}$ if $\alpha^\vee \neq -\bar \beta_{m}$ and
$l_{\al,m}=l_{\al,m-1}-1$ if $\alpha^\vee = -\bar \beta_{m}$).
\end{proof}

\begin{prop}\label{decompositionlocal}
Assume there exists an arrow $\sigma \to \sigma s_{\bar\beta_m}$ in QBG. Then the kernel of the surjection
$W_{\sigma(\la_-)}(m-1) \to W_{\sigma(\la_-)}(m)$ is a quotient of $W_{\sigma s_{\bar\beta_m}(\la_-)}(m)$.
If the arrow does not exist in QBG, then the surjection $W_{\sigma(\la_-)}(m-1) \to W_{\sigma(\la_-)}(m)$ has no kernel.
\end{prop}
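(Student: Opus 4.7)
The plan is to first identify the kernel of $W_{\sigma(\la_-)}(m-1)\to W_{\sigma(\la_-)}(m)$ as the $\I$-submodule $W'_{\sigma(\la_-)}(m)$ generated by $w_{\sigma(\la_-),m}=e_{\widehat\sigma(-\bar\beta_m)}^{l_m}w_{\sigma(\la_-)}$. Inspecting \eqref{charrel} one sees that $l_{\al,m}=l_{\al,m-1}$ except when $\al^\vee=-\bar\beta_m$, in which case the former is one smaller. The unique new defining relation imposed at level $m$ is therefore $w_{\sigma(\la_-),m}=0$, and the kernel equals $W'_{\sigma(\la_-)}(m)$. When the QBG arrow $\sigma\to\sigma s_{\bar\beta_m}$ exists, the first assertion is then an immediate consequence of Proposition \ref{pre-decompositionlocal}: the submodule $W'_{\sigma(\la_-)}(m)$ is a quotient of $W_{\sigma s_{\bar\beta_m}(\la_-)}(m)$ as $\fn^{af}$-modules, and promoting this to an $\I$-statement reduces to checking that $w_{\sigma(\la_-),m}$ carries $\fh$-weight $\sigma s_{\bar\beta_m}(\la_-)$, which is a direct weight computation using the definition of $\widehat\sigma$.

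For the second case, when no QBG arrow exists, the task becomes to prove that $w_{\sigma(\la_-),m}=0$ already in $W_{\sigma(\la_-)}(m-1)$. The plan is to invoke Lemma \ref{ranktworeduction} for the rank-two subsystem $\Delta_2$ spanned by $-\bar\beta_m$ together with a second positive root chosen to exhibit the obstruction to the QBG arrow. The lemma identifies the restricted action on $w_{\sigma(\la_-)}$ with a cyclic module of the rank-two Iwahori subalgebra isomorphic to one of the generalized Weyl modules with characteristic enumerated there, or in the exceptional $G_2$ situation with the module \eqref{remainingg2}. A direct check in each of the rank-two types $A_1\times A_1$, $A_2$, $C_2$, $G_2$ then shows that the absence of a QBG arrow is equivalent to the exponent $l_m$ being dominated by one of the already imposed characteristic exponents in the rank-two Weyl module, so that $e_{\widehat\sigma(-\bar\beta_m)}^{l_m}$ annihilates the cyclic vector via the characteristic relations.

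The main obstacle is the rank-two case analysis, particularly in types $C_2$ and $G_2$, where multiple positive coroots interact along the sequence $\{\bar\beta_j\}$ and the matching between the length criterion defining a QBG arrow and the characteristic exponent bounds needs careful bookkeeping (cf.\ Lemma \ref{braidonbetas}). Once these rank-two cases are settled, the global statement follows by the identical argument, since Proposition \ref{pre-decompositionlocal} and Lemma \ref{ranktworeduction} are both equipped with parenthetical global counterparts, and the $A_{(\la_-)_\sigma}$-action is preserved along the characteristic-raising surjections by Theorem \ref{FMO-free} and Corollary \ref{U-free}.
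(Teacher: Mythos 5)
Your proposal is correct and follows essentially the same route as the paper: both identify the kernel as the submodule generated by $w_{\sigma(\la_-),m}=e_{\widehat\sigma(-\bar\beta_m)}^{l_m}w_{\sigma(\la_-)}$, settle the arrow case by Proposition \ref{pre-decompositionlocal}, and reduce the no-arrow vanishing to an explicit rank-two analysis via Lemmas \ref{braidonbetas} and \ref{ranktworeduction} (the paper defers exactly those rank-two details to the proof of Theorem 2.18 in \cite{FeMa3}, just as you defer them to a type-by-type check). One small caveat: $w_{\sigma(\la_-),m}$ has $\fh$-weight $\sigma s_{\bar\beta_m}(\la_-)$ only when $\bar\beta_m$ does not occur among $\bar\beta_1,\dots,\bar\beta_{m-1}$ (and there is in addition a $q$-degree shift when $\widehat\sigma$ produces a generator of the form $f_{\gamma}\otimes z$), so the asserted quotient is as $\fn^{af}$-modules, i.e.\ up to weight and degree twists of the cyclic vector, exactly as in Proposition \ref{pre-decompositionlocal}; this does not affect your argument.
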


\begin{proof}
By the above calculations, we can transplant the relations by examining each rank two root subsystem containing $\bar\beta_m$. 
The details are completely analogous to the proof of \cite[Theorem 2.18]{FeMa3}.
\end{proof}

The following results are what we call the decomposition procedure, that is originally proved in \cite{FeMa3} when 
$\mu = -\omega_i$, $i \in I$: their proofs will be given after Corollary \ref{combmodel}.

\begin{thm}\label{kernel}
If the kernel of the surjection
$W_{\sigma(\la_-)}(m-1) \to W_{\sigma(\la_-)}(m)$ is non-trivial, then it is isomorphic to $W_{\sigma s_{\bar\beta_m}(\la_-)}(m)$.
\end{thm}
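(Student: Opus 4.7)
The plan is to upgrade the surjection from Proposition \ref{decompositionlocal} to an isomorphism by a character count. Set
$$K_m := \ker\bigl(W_{\sigma(\la_-)}(m-1) \to W_{\sigma(\la_-)}(m)\bigr).$$
If $K_m \ne 0$ then, by Proposition \ref{decompositionlocal}, the QBG arrow $\sigma \to \sigma s_{\bar\beta_m}$ must exist, and we already have a surjection
$$\varphi_m : W_{\sigma s_{\bar\beta_m}(\la_-)}(m) \twoheadrightarrow K_m,$$
which a priori gives the inequality $\ch \, K_m \le \ch \, W_{\sigma s_{\bar\beta_m}(\la_-)}(m)$. Since the modules have finite-dimensional graded pieces, it suffices to prove the reverse inequality, which then upgrades $\varphi_m$ to an isomorphism.

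The strategy is downward induction on $m$, starting from $W_{\sigma(\la_-)}(l) = W_{\sigma(\la_- - \mu)}$, whose character is already known. At each step the short exact sequence
$$0 \to K_m \to W_{\sigma(\la_-)}(m-1) \to W_{\sigma(\la_-)}(m) \to 0$$
gives $\ch \, K_m = \ch \, W_{\sigma(\la_-)}(m-1) - \ch \, W_{\sigma(\la_-)}(m)$. Assuming inductively that $\varphi_{m'}$ is an isomorphism for every $m' > m$ with non-trivial kernel, the character $\ch \, W_{\sigma(\la_-)}(m-1)$ telescopes into a sum of characters of generalized Weyl modules with characteristic $W_{\sigma'(\la_-)}(m')$ indexed by sequences of QBG arrows issuing from $\sigma$. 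On the other side, the quantum alcove path expansion of Ram--Yip / Orr--Shimozono (cf. Section \ref{qBg} and \cite{OS}) expresses the character of $W_{\sigma(\la_-)}$ as a sum over $\mathcal{QB}(\mathrm{id}; t_\mu)$, each alcove path contributing a monomial whose degree is encoded by $qwt^{*}$ and the non-quantum part. Both sides are non-negative integer combinations of monomials, so comparison of the two expansions ought to yield $\ch \, K_m \ge \ch \, W_{\sigma s_{\bar\beta_m}(\la_-)}(m)$.

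To carry out the matching at each individual quantum step, I would apply the rank-two reduction of Lemma \ref{ranktworeduction}, which reduces the combinatorial verification to explicit checks in types $A_1 \times A_1$, $A_2$, $C_2$, and $G_2$. The contribution to $\ch \, W_{\sigma(\la_-)}(m-1) - \ch \, W_{\sigma(\la_-)}(m)$ from the quantum step at position $m$ should then coincide with the character of $W_{\sigma s_{\bar\beta_m}(\la_-)}(m)$ shifted by the degree of the cyclic generator $w _{\sigma ( \la_-), m} = e_{{\widehat \sigma}(-\bar\beta_m)}^{l_{m}} w_{\sigma(\la_-)}$. The main obstacle is to rule out that an alternating cancellation across the tower of short exact sequences conceals a strict quotient at some intermediate level; this parallels, but is more delicate than, the special case $\mu = -\omega_i$ of \cite[Theorem 2.18]{FeMa3}, and ought to follow from the positivity of the alcove path formula combined with the independence of the recursion on the chosen reduced expression of $t_\mu$ established via Lemma \ref{braidonbetas}.
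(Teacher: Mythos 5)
Your overall shape (surjection from Proposition \ref{decompositionlocal} plus a character comparison against a known positive path-sum) is the right one, but the way you organize the comparison has a genuine gap. Your downward induction on $m$ cannot close: at the step from $m$ to $m-1$ you need the reverse inequality $\ch \, K_m \ge \ch \, W_{\sigma s_{\bar\beta_m}(\la_-)}(m)$, and by your own telescoping this is equivalent to knowing $\ch \, W_{\sigma(\la_-)}(m-1)$ exactly. But for $0 < m-1 < l$ this character is not available independently -- it is precisely what Corollary \ref{combmodel} is to establish. The only module in the tower whose character is known beforehand is $W_{\sigma(\la_-)}(0)=W_{\sigma(\la_-)}$, via \cite[Theorem 2.21]{FeMa3} (not via the Ram--Yip/Orr--Shimozono formula, which is a statement about Macdonald polynomials, and in any case the relevant path set is attached to a reduced expression of $t_{\la_-}$, not of $t_\mu$). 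The correct logic, which is what the paper does, is a global squeeze anchored at $m=0$: first take $\mu=\la_-$, so that every terminal subquotient is a (weight-twisted) one-dimensional module $W_0$ and the repeated application of Proposition \ref{decompositionlocal} gives a coefficientwise upper bound for $\ch \, W_{\sigma(\la_-)}$ by the path count of \cite[Corollary 2.9, Theorem 2.21]{FeMa3}; since that upper bound equals the known character, positivity forces \emph{every} surjection in the whole chain to be an isomorphism simultaneously, and Theorem \ref{kernel} for general $\mu$ and general $m$ then follows because the tower for $t_\mu$ is an initial segment of the tower for the concatenated reduced expression of $t_{\la_-}$.

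The step you flag as "the main obstacle" -- ruling out a strict quotient at some intermediate level -- is exactly this crux, and your proposed remedies do not supply it. The rank-two reduction of Lemma \ref{ranktworeduction} only serves to transplant relations, i.e.\ it yields the surjection $W_{\sigma s_{\bar\beta_m}(\la_-)}(m)\twoheadrightarrow K_m$ (the upper bound); local rank-two matching cannot by itself exclude a globally proper quotient, and "independence of the reduced expression" (Lemma \ref{braidonbetas}) plays no role in closing the inequality. Also note there is no "alternating cancellation" to worry about: each short exact sequence gives an exact, sign-free character identity, so once the total upper bound is matched against the known character of $W_{\sigma(\la_-)}$ the conclusion is immediate; what your write-up lacks is that single global anchor and the reduction to $\mu=\la_-$ that makes the anchor usable.
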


\begin{thm}\label{local}
Let $\mu$ be an anti-dominant weight with a reduced decomposition \eqref{pi} and $\la_-, \la_--\mu\in P_-$.
For each $0\le m\le l=\ell(t_\mu)$, the generalized Weyl module with characteristics $W_{\sigma(\la_-)}(m)$, constructed
via a reduced decomposition of $t_\mu$, can be filtered in such a way that:
\begin{itemize}
\item each subquotient is a generalized Weyl module of the form $W_{\tau(\la_--\mu)}$ for some Weyl group
element $\tau$;
\item
the number of subquotients is equal to the number of directed paths in the quantum Bruhat graph starting at $\sigma$ and
with labels of the form $\bar\beta_{j_1},\dots,\bar\beta_{j_k}$, $m\le j_1<\dots<j_k\le l$.
\end{itemize}
\end{thm}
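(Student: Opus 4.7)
The plan is a downward induction on $m$, running from $m=l$ back toward $m=0$, in which the inductive step uses the short exact sequence furnished by Theorem \ref{kernel} together with a bijective matching of quantum paths. The base case $m=l$ is immediate: by definition of the characteristics $W_{\sigma(\la_-)}(l)=W_{\sigma(\la_--\mu)}$, so the one-step filtration with the single subquotient $W_{\sigma(\la_--\mu)}$ works, and the only admissible quantum path is the trivial length-zero path at $\sigma$, matching the count.

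For the inductive step, assume the statement holds at level $m+1$ for every starting Weyl group element. Theorem \ref{kernel} applied to the surjection $W_{\sigma(\la_-)}(m)\to W_{\sigma(\la_-)}(m+1)$ gives a short exact sequence
\[
0\to K\to W_{\sigma(\la_-)}(m)\to W_{\sigma(\la_-)}(m+1)\to 0,
\]
where $K=0$ unless $\sigma\to\sigma s_{\bar\beta_{m+1}}$ is an edge of the quantum Bruhat graph, in which case $K\cong W_{\sigma s_{\bar\beta_{m+1}}(\la_-)}(m+1)$. Splicing the filtration supplied by the inductive hypothesis for $W_{\sigma(\la_-)}(m+1)$ on top of the one for $K$ (if $K\neq 0$) produces a filtration of $W_{\sigma(\la_-)}(m)$ whose subquotients are each of the form $W_{\tau(\la_--\mu)}$ for some $\tau\in W$.

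To verify the count, partition the set of quantum paths starting at $\sigma$ with labels indexed by a subset of $\{m+1,\dots,l\}$ according to whether $m+1$ occurs as the first index. Paths in which $m+1$ is not used are in bijection with quantum paths starting at $\sigma$ whose indices lie in $\{m+2,\dots,l\}$, and by induction they enumerate the subquotients of $W_{\sigma(\la_-)}(m+1)$. Paths whose first index is $m+1$ begin with the edge $\sigma\to\sigma s_{\bar\beta_{m+1}}$ and, after deletion of this first step, correspond to quantum paths at $\sigma s_{\bar\beta_{m+1}}$ with indices in $\{m+2,\dots,l\}$; these exist precisely when the edge exists in QBG, in which case by induction they enumerate the subquotients of $K$. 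Combining the two tallies reproduces the count of subquotients of $W_{\sigma(\la_-)}(m)$.

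The main obstacle is not this inductive packaging but the input from Theorem \ref{kernel}, namely that the kernel, when nonvanishing, is actually \emph{isomorphic} to $W_{\sigma s_{\bar\beta_{m+1}}(\la_-)}(m+1)$ rather than merely a quotient of it as delivered by Proposition \ref{decompositionlocal}. Once this isomorphism is in hand, no further delicate module-theoretic input is required: the remaining work is the clean matching of the short exact sequence with the partition of quantum paths by their first index. The argument for $\mathbb{W}_{\sigma(\la_-)}(m)$ proceeds identically, replacing each local module with its global counterpart throughout.
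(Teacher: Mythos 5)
Your inductive packaging is fine as bookkeeping, and your indexing (labels from $\{m+1,\dots,l\}$ for $W_{\sigma(\la_-)}(m)$, splitting paths by whether $m+1$ is the first label) agrees with the convention of Corollary \ref{combmodel}. But the proposal has a genuine gap: you treat Theorem \ref{kernel} as available input, whereas in the paper Theorem \ref{kernel} has no independent proof --- it is established \emph{simultaneously} with Theorem \ref{local} by one combined argument, and the only statement available beforehand is Proposition \ref{decompositionlocal}, which says merely that the kernel of $W_{\sigma(\la_-)}(m-1)\to W_{\sigma(\la_-)}(m)$ is a \emph{quotient} of $W_{\sigma s_{\bar\beta_m}(\la_-)}(m)$. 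So, read against the paper's logical structure, invoking Theorem \ref{kernel} to prove Theorem \ref{local} is circular; what your induction actually delivers from the honest input is only a filtration whose subquotients are quotients of the modules $W_{\tau(\la_--\mu)}$, i.e.\ a character \emph{inequality} with the path count as an upper bound.

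The missing idea is precisely how the paper upgrades this inequality to an equality (and hence ``quotient of'' to ``isomorphic to''), which is the real content you have deferred. The paper specializes to $\mu=\la_-$: there every putative subquotient is a weight twist of $W_0$, hence one-dimensional, and the subquotients are labelled by paths $p\in\mathcal{QB}_{\sigma,\la_-}(m)$ with character $x^{\mathrm{wt}(\mathrm{end}(p))}q^{\deg(qwt^*(p))}$ (the weight shifts of Proposition \ref{pre-decompositionlocal}). Comparing with the already known character of the generalized Weyl module --- the QBG-path/Orr--Shimozono-type formula of \cite[Corollary 2.9, Theorem 2.21]{FeMa3}, i.e.\ the $m=0$ case of Corollary \ref{combmodel} --- forces every inequality produced by the decomposition procedure to be an equality; this simultaneously proves Theorem \ref{kernel}, Theorem \ref{local}, and Corollary \ref{combmodel} for all $m$. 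Without this character comparison (or some substitute argument showing the kernel maps are injective), your induction does not close.
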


We consider a reduced decomposition of $t_{\la_-}$ obtained by concatenating the reduced decomposition of $t_\mu$ (used to construct
$W_{\sigma(\la_-)}(m)$) and a reduced decomposition of $t_{\la_--\mu}$.
For each $0 \le m < \ell(t_{\la_-})$, we define ${\mathcal{QB}}_{\sigma,\la_-}(m)$ to be the subset of ${\mathcal{QB}}(id; t_{\sigma(\la_-)}\sigma)$ (see section \ref{qBg}) so that the sequence of $\beta$'s given by
$$\beta_{m+1}(\la_-),\dots,\beta_l(\la_-), \ \text{where} \ l=\ell(t_{\la_-}).$$

\begin{cor}\label{combmodel}
Under the above settings, we have:
\[
\ch \, W_{\sigma(\la_-)}(m)=\sum_{p\in {\mathcal{QB}}_{\sigma,\la_-}(m)} x^{{\rm wt}({\rm end} (p))} q^{\deg(qwt^*(p))}.
\]
\end{cor}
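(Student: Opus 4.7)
The plan is to iterate the decomposition procedure of Theorem \ref{local} (and Theorem \ref{kernel}) through both halves of the concatenated reduced decomposition of $t_{\la_-} = t_\mu t_{\la_- - \mu}$ until every subquotient becomes one-dimensional, and then collect the characters.

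First, I would apply Theorem \ref{local} to $W_{\sigma(\la_-)}(m)$, yielding a filtration whose subquotients are generalized Weyl modules $W_{\tau(\la_- - \mu)}$, indexed by QBG paths using positions $m+1, \ldots, \ell(t_\mu)$ of the first block. Since $\la_- - \mu \in P_-$, each $W_{\tau(\la_- - \mu)}$ is itself a generalized Weyl module without characteristic, so I apply Theorem \ref{local} a second time to each piece with respect to the reduced decomposition of $t_{\la_- - \mu}$. This refines the filtration so that its subquotients are all of the form $W_{\tau'(0)} \cong \bC$, and they are in natural bijection with QBG paths in the concatenated decomposition running through positions $m+1, \ldots, \ell(t_{\la_-})$ starting from $\sigma$ — that is, with the set ${\mathcal{QB}}_{\sigma,\la_-}(m)$.

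Second, I would compute the $\fh$-weight and the internal grading of the cyclic vector of each one-dimensional subquotient in terms of the corresponding path. Tracing through Theorem \ref{kernel}, the cyclic vector attached to a path $p = (z_0, z_1, \ldots, z_r)$ (with $z_0 = t_{\sigma(\la_-)}\sigma$) is obtained from $u_{\sigma(\la_-)}$ by successively applying operators of the form $e_{\widehat{\mathrm{dir}}(z_{k-1})(-\bar\beta_{j_k})}^{l_k}$. A direct calculation should show that the cumulative $\fh$-weight shift along the path equals $\mathrm{wt}(\mathrm{end}(p)) - \sigma(\la_-)$, matching the translation part of $z_r$ in $W^e$ viewed through repeated multiplication by $s_{\beta_{j_k}}$. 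For the grading, classical Bruhat edges correspond under the $\widehat{\mathrm{dir}}(z_{k-1})$-action to operators $e_{\cdot} \otimes 1$ of $z$-degree zero, whereas quantum edges correspond to $f_{-\cdot} \otimes z$ of $z$-degree one; thus the total $z$-degree accumulates to exactly $\deg(qwt^*(p))$.

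The main obstacle is the careful bookkeeping at the boundary between the two blocks of the reduced decomposition — in particular, checking that the sequence of affine coroots $\beta_j(\la_-)$ for $j > \ell(t_\mu)$ matches the coroots $\beta_{j - \ell(t_\mu)}(\la_- - \mu)$ after the appropriate $t_\mu$-twist, so that paths produced in the second application of Theorem \ref{local} correctly concatenate with those produced in the first, and that the $l_k$ in the second block is computed relative to the characteristic $\la_- - \mu$ rather than to $\la_-$. This is a routine consequence of the formulas for $\beta_j$ in \S\ref{gWc} together with the length-additivity $\ell(t_{\la_-}) = \ell(t_\mu) + \ell(t_{\la_- - \mu})$ for antidominant $\mu$ and $\la_- - \mu$. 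Once these identifications and the weight/degree tracking are verified, the corollary follows by summing $x^{\mathrm{wt}(\mathrm{end}(p))} q^{\deg(qwt^*(p))}$ over all $p \in {\mathcal{QB}}_{\sigma,\la_-}(m)$.
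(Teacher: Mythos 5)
Your overall mechanism---refine the filtration down to one-dimensional subquotients and read off their weights and degrees---is indeed what drives the paper's argument, but as written the proposal has a genuine gap, and it also hides the one point the paper actually has to work for. Structurally, Theorem \ref{kernel}, Theorem \ref{local} and this corollary are proved \emph{together} in the paper: Proposition \ref{decompositionlocal} only shows that the kernel of $W_{\sigma(\la_-)}(m-1)\to W_{\sigma(\la_-)}(m)$ is a \emph{quotient} of $W_{\sigma s_{\bar\beta_m}(\la_-)}(m)$, so iterating the procedure gives a priori only a character \emph{inequality}; the paper upgrades it to an equality by comparing with the already known case $m=0$, $\mu=\la_-$ (\cite[Theorem 2.21]{FeMa3}) together with the character of each one-dimensional subquotient (\cite[Corollary 2.9]{FeMa3}), using positivity of characters. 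So invoking Theorem \ref{local} (twice) as a finished black box is not really available unless you either supply that inequality-to-equality step or import the external references. Moreover, even granting Theorem \ref{local}, its statement only identifies each subquotient as some $W_{\tau(\la_--\mu)}$ as an $\fn^{af}$-module and counts them; it says nothing about the $\fh$-weight and $q$-degree twists, so the entire content of the corollary sits in your second step.

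And that second step is where the proposal breaks. At a quantum step the new cyclic vector is obtained by applying the $l_{j_k}$-th \emph{power} of an operator of the form $f_\gamma\otimes z$ (Proposition \ref{pre-decompositionlocal}, Theorem \ref{kernel}), so the degree jumps by $l_{j_k}$, not by one; on the other side, $\deg(qwt^*(p))$ is the sum over quantum edges of the $\delta^\vee$-degrees $\deg\beta_{j_k}(\la_-)$, which are in general larger than one, and for first-block positions $j_k\le\ell(t_\mu)$ these degrees differ from $\deg\beta_{j_k}(\mu)$ by a translation shift (note that your boundary worry points at the wrong block: since the $\beta$'s are built from the right, the second-block coroots coincide on the nose with the $\beta_j(\la_--\mu)$, while it is the first-block coroots whose degrees get shifted by $t_{\la_--\mu}$). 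Your sentence that quantum edges have ``$z$-degree one, thus the total $z$-degree accumulates to exactly $\deg(qwt^*(p))$'' would compute the \emph{number} of quantum edges, which is not $\deg(qwt^*(p))$. The identity that the accumulated exponents and weight shifts along a path produce exactly $x^{{\rm wt}({\rm end}(p))}q^{\deg(qwt^*(p))}$ is a genuine computation; the paper does not redo it but cites \cite[Corollary 2.9]{FeMa3}. Either cite that result (and \cite[Theorem 2.21]{FeMa3} for the base case that makes the filtration exact) or carry out the step-by-step induction explicitly; as sketched, the degree count is false.
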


\begin{proof}[Proofs of Theorem \ref{kernel}, Theorem \ref{local}, and Corollary \ref{combmodel}]
By a repeated application of Proposition \ref{decompositionlocal}, we deduce a numerical inequality version of Theorem \ref{local}, that asserts 
$\ch \, W_{\sigma(\la_-)}(m)$ is smaller than or equal to the sum of $\ch \,  W_{\tau(\la_--\mu)}$ described in Theorem \ref{local}. Moreover, an equality here implies Theorem \ref{local} itself.

For $\mu=\la_-$, every subquotient in Theorem \ref{local} is isomorphic to $W _0$ (with weight twists), and hence is one-dimensional 
(contributes by one to the numerical inequality in the previous paragraph). In addition, these subquotients are parametrized by 
the elements $p\in {\mathcal{QB}}_{\sigma,\la_-}(m)$.

Hence, if we know that the character of the one-dimensional subquotient labeled by
$p$ is given by $x^{{\rm wt}({\rm end} (p))} q^{\deg(qwt^*(p))}$ (that calculates the effect of the change of weights in 
Proposition \ref{pre-decompositionlocal}, and also the equality in Corollary \ref{combmodel} when $m=0$), then we deduce the 
equality in our numerical version of Theorem \ref{local} in this particular case. They are contained in \cite[Corollary 2.9]{FeMa3} and \cite[Theorem 2.21]{FeMa3}, respectively.

We apply the numerical inequality version of Theorem \ref{local} repeatedly to conclude the equality in the end. 
Thus the $m = 0$ case verifies the equality in the numerical inequality version of Theorem \ref{local} for general $m$. 
Hence, Theorem \ref{kernel} and Theorem \ref{local} hold. Now Corollary \ref{combmodel} for general $m$ follows as every $p\in {\mathcal{QB}}_{\sigma,\la_-}(m)$ contributes to the character of $\ch \, W_{\sigma(\la_-)}(m)$. 
\end{proof}

Below we present the decomposition procedure for the global module $\W_{\sigma(\la_-)}(m)$. We assume that the weight of the cyclic vector of
$\W_{\sigma(\la_-)}(m)$ is equal to $\sigma(\la_-)$ unless stated otherwise.

The character of a finitely generated graded $\I$-module $M$ is well-defined as:
$$\ch \, M = \sum_{\mu\in P} x^\mu c_\mu(q) \hskip 10mm c_\mu(q) \in \mathbb Z [\![q]\!].$$
In particular, $\ch \, W_{\sigma(\la_-)}(m)$ and $\ch \, \W_{\sigma(\la_-)}(m)$ make sense for every $\sigma \in W, \la_- \in P_-,$ and $m \in \mathbb Z_{\ge 0}$ as the both modules are cyclic.

The following is a slight generalization of \cite{FMO} from the case $\mu = - \omega_i$.

\begin{thm}\label{decompositionglobal}
Let $\la_-,\mu, \la_- - \mu \in P_-$ and let $\beta_j=\beta_j(\mu)$. Then the following holds:
\begin{enumerate}
\item If there is no edge $\sigma \to \sigma s_{\bar\beta_m}$ in QBG, then the surjective map
$\W_{\sigma(\la_-)}(m-1)\to {\mathbb W}_{\sigma(\la_-)}(m)$ is an isomorphism.
If the edge does exist, then the kernel of this map is isomorphic to ${\mathbb W}_{\sigma s_{\bar\beta_m}(\la_-)}(m-1)$;
\item \label{characters} One has a character equality
\[
\ch \, \W_{\sigma(\la_-)}(m)=\frac{\ch \, W_{\sigma(\la_-)}(m)}{(q)_{\la_- + \omega(m)}},
\]
where we set
$$\omega ( m ) := \sum_{1 \le i \le m, -\bar\beta_i=\al_{j}^\vee \text{ is simple}} \omega_{j}.$$
\end{enumerate}
\end{thm}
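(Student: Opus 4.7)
The plan is to prove (1) and (2) together by simultaneous induction on $m$, generalizing the argument of \cite{FMO} (which treats the case $\mu=-\omega_i$). The base case $m=0$ is vacuous for (1), while (2) reduces to the corollary after Theorem~\ref{FMO-free} since $\omega(0)=0$.

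For the inductive step of (1), Proposition~\ref{pre-decompositionlocal} in its global form shows that the cyclic vector $w_{\sigma(\la_-),m-1} = e_{\widehat\sigma(-\bar\beta_m)}^{l_m}w_{\sigma(\la_-)}$ of the kernel $\W'_{\sigma(\la_-)}(m)\subset \W_{\sigma(\la_-)}(m-1)$ satisfies the defining relations of $\W_{\sigma s_{\bar\beta_m}(\la_-)}(m-1)$, yielding a surjection $\W_{\sigma s_{\bar\beta_m}(\la_-)}(m-1)\twoheadrightarrow \W'_{\sigma(\la_-)}(m)$. Exactly as in the local case (Proposition~\ref{decompositionlocal}), one then applies Lemma~\ref{ranktworeduction} (global version) to each rank-two root subsystem containing $-\bar\beta_m$ to conclude that $w_{\sigma(\la_-),m-1}=0$ in the absence of the QBG arrow $\sigma\to\sigma s_{\bar\beta_m}$; when the arrow is present, the surjection above is to be upgraded to an isomorphism below.

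For (2), the key numerical observation is that if $-\bar\beta_m=\alpha_j^\vee$ is a simple coroot, then a QBG arrow $\sigma\to\sigma s_j$ is always present: a Bruhat one if $\ell(\sigma s_j)=\ell(\sigma)+1$, and a quantum one if $\ell(\sigma s_j)=\ell(\sigma)-1$, since $\sum_{\gamma\in\Delta_+}\langle\gamma,\alpha_j^\vee\rangle=\langle 2\rho,\alpha_j^\vee\rangle=2$. Hence in the no-arrow case $-\bar\beta_m$ is automatically non-simple, so $\omega(m)=\omega(m-1)$ and (2) for $m$ follows from (2) for $m-1$ via the isomorphisms supplied by (1) and Proposition~\ref{decompositionlocal}. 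In the arrow case, combining the global short exact sequence from (1) with the local one from Theorem~\ref{kernel}, and applying the inductive hypothesis to $\W_{\sigma(\la_-)}(m-1)$ and $\W_{\sigma s_{\bar\beta_m}(\la_-)}(m-1)$ (which share the denominator $(q)_{\la_-+\omega(m-1)}$), reduces (2) to a residual identity relating $\ch W_{\sigma(\la_-)}(m-1)$, $\ch W_{\sigma s_{\bar\beta_m}(\la_-)}(m-1)$, and $\ch W_{\sigma(\la_-)}(m)$ modulated by the ratio $(q)_{\la_-+\omega(m-1)}/(q)_{\la_-+\omega(m)}$.

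The hard part is verifying this residual identity, and my preferred route is conceptual: exhibit a free action of a polynomial subalgebra $A^{(m)}\subset A(w_0\la_-)$ on $\W_{\sigma(\la_-)}(m)$ with $\mathsf{gdim}\,A^{(m)}=(q)_{\la_-+\omega(m)}^{-1}$ and augmentation quotient $W_{\sigma(\la_-)}(m)$, generalizing Theorem~\ref{FMO-free} and Proposition~\ref{f-factor}. The subalgebra $A^{(m)}$ is obtained from $A(w_0\la_-)$ by trimming, along each simple direction $\alpha_i$, exactly the $k_i$ generators corresponding to the occurrences of $\alpha_i^\vee$ among $\{-\bar\beta_j\}_{j\le m}$ -- precisely the coefficient of $\omega_i$ in $\omega(m)$. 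Freeness is verified by reducing to $\mathfrak{sl}_2[z]$-computations along each simple direction, as in the proof of Proposition~\ref{f-factor}; non-simple coroot characteristics do not affect the weight-$\sigma(\la_-)$ part of $\W_{\sigma(\la_-)}(m)$. The ensuing character equality for $\W_{\sigma(\la_-)}(m)$ then matches the claim of (2), and the graded Nakayama lemma as in Corollary~\ref{U-free} simultaneously upgrades the remaining surjection in (1) to an isomorphism, completing the induction.
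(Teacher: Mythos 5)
Your bookkeeping is sound and partly parallels the paper: the no-arrow case of (1) via the (global) rank-two analysis of Proposition \ref{pre-decompositionlocal}, the observation that a simple coroot $-\bar\beta_m=\al_j^\vee$ always produces a QBG arrow $\sigma\to\sigma s_j$ (Bruhat or quantum, since $\bra 2\rho,\al_j^\vee\ket=2$) is correct and is exactly the point the paper isolates when it modifies \cite[Lemma 3.13]{FMO}, and the reduction of the arrow case to a residual character identity via the two short exact sequences is legitimate. But the crux of the theorem is precisely that residual identity, and your proposed route for it does not work as stated. You want to produce a subalgebra $A^{(m)}\subset A(w_0\la_-)$ acting \emph{freely} on $\W_{\sigma(\la_-)}(m)$ with $\mathsf{gdim}\,A^{(m)}=(q)_{\la_-+\om(m)}^{-1}$ and augmentation quotient $W_{\sigma(\la_-)}(m)$, and you claim freeness is "verified by reducing to $\mathfrak{sl}_2[z]$-computations as in Proposition \ref{f-factor}." Those computations, however, only give the \emph{upper} bound: they show that the top weight space is a cyclic $A^{(m)}$-module, i.e.\ a quotient of $A^{(m)}$. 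They say nothing about freeness of the action on the whole module, which is the statement equivalent to the character formula you are trying to prove. In Proposition \ref{f-factor} the matching \emph{lower} bound came from the surjection of Lemma \ref{UW} onto a global Weyl module $\W_{\sigma((\la_-)_\sigma)}$ whose freeness is already known (Theorem \ref{FMO-free}, i.e.\ \cite{FMO}); for an intermediate characteristic $m$ you supply no analogue of such a surjection, and everywhere in this circle of ideas (Corollary \ref{U-free}, Corollary \ref{lambdasigma}, \cite{FMO}) freeness is \emph{deduced} from the character equality via graded Nakayama, not the other way around. So your argument is circular exactly at the hard step; the same circularity also blocks the promised upgrade of the surjection $\W_{\sigma s_{\bar\beta_m}(\la_-)}(m-1)\twoheadrightarrow\ker$ to an isomorphism in part (1). (There is also an unverified assertion that the characteristic submodule is stable under the $A$-action, so that $A^{(m)}$ acts on $\W_{\sigma(\la_-)}(m)$ at all.)

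The paper closes this gap differently: it does not argue via freeness but runs the character-level induction of \cite[Theorem 3.16]{FMO}, which ultimately rests on the quantum-Bruhat-graph path counting of \cite[Theorem 2.21]{FeMa3} through \cite[Lemma 3.12]{FMO}; the only new input is the bookkeeping of which $-\bar\beta_j$ are simple (your correct observation), since for general antidominant $\mu$ several simple coroots can occur among $\{-\bar\beta_j\}_{j\le m}$. If you want to salvage your write-up, you should either import that path-counting argument to prove the residual identity (note also that in the arrow case with $-\bar\beta_m$ non-simple you still need to check that there is no reverse arrow $\sigma s_{\bar\beta_m}\to\sigma$, so that $\ch\,W_{\sigma s_{\bar\beta_m}(\la_-)}(m-1)=\ch\,W_{\sigma s_{\bar\beta_m}(\la_-)}(m)$ — this is where the mismatch of characteristics $m-1$ versus $m$ between the global and local kernels gets absorbed), or give an independent proof of the freeness of $A^{(m)}$, which is a genuinely nontrivial statement not obtainable from weight-space considerations alone.
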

\begin{proof}
The proof of the first claim of $i)$ is analogous to that of \cite[Theorem 2.18 i)]{FeMa3}. We deduce that the kernel of the map
$\W_{\sigma(\la_-)}(m-1)\to {\mathbb W}_{\sigma(\la_-)}(m)$ is a quotient
 of the module
${\mathbb W}_{\sigma s_{\bar\beta_m}(\la_-)}(m-1)$ by the same way as in Proposition \ref{decompositionlocal}
(see Proposition \ref{pre-decompositionlocal}).

To prove part \eqref{characters} we use the same inductive argument as in the proof \cite[Theorem 3.16]{FMO}
(that ultimately relies on the counting paths in the quantum Bruhat graph in \cite[Theorem 2.21]{FeMa3} through \cite[Lemma 3.12]{FMO}).
The only modification needed is \cite[Lemma 3.13]{FMO}. Namely, the crucial point in this Lemma is to figure
out if a coroot $-\bar\beta_{\bullet}$ is simple. In particular, the proof of \cite[Theorem 3.16]{FMO} implies that
if
\[
\ch \, \W_{\sigma(\la_-)}(j)=\frac{\ch \, W_{\sigma(\la_-)}(j)}{(q)_{\mu}}
\]
for some anti-dominant weight $\mu$, then
\[
\ch \, \W_{\sigma(\la_-)}(j+1)=
\begin{cases}
\frac{\ch \, W_{\sigma(\la_-)}(j+1)}{(q)_\mu}, & \text{ if } -\bar\beta_{j+1}\text{ is not simple},\\
\frac{\ch \, W_{\sigma(\la_-)}(j+1)}{(q)_{\mu+\om_i}}, &  \text{ if } -\bar\beta_{j+1}=\al^\vee_i.
\end{cases}
\]
If $\mu$ is equal to a negated fundamental weight, then $-\bar\beta_{j+1}$ is simple if and only if
$j=0$. The main difference here and \cite{FMO} is that there might be several simple roots among $\{-\bar\beta_{j} \}^{m}_{j=1}$ for general $\mu$.
\end{proof}

\subsection{Nonsymmetric Macdonald polynomials at infinity}\label{NM}
We fix $\la \in P_-$, and we assume that $\sigma \in W$ is the maximal length element in the class $\sigma\cdot {\rm stab}_W
(\la_-)$. We set $\la'_-=w_0\sigma(\la_-)$. Then, $v (\la_-')=\sigma^{-1}w_0$ is the shortest element such that
$v ( \la_-' ) \la_-'=\la_-$ (see the end of section \ref{Aa}). Moveover, the factorization $t _{\la_-} = v ( \la_-' ) u(\la_-')$
refines to a reduced expression.

If we fix reduced expressions
\[
v ( \la_-' )=s_{i_1}\dots s_{i_r},\ u(\la_-')=\pi s_{i_{r+1}}\dots s_{i_M},
\]
then we obtain a reduced expression
\begin{equation}\label{reddec}
t_{\la_-}=\pi s_{\pi^{-1}i_1}\dots s_{\pi^{-1} i_r} s_{i_{r+1}}\dots s_{i_M}.
\end{equation}

We apply the procedure of section \ref{gWc} to the reduced decomposition \eqref{reddec} to obtain a sequence $\beta_j=\beta_j(t_{\la_-})$ for $1 \le j \le \ell (t_{\la_-})$ that we fix throughout this section.

\begin{thm}\label{UWchar}
Under the above settings, we have
$$U_{\sigma(\la_-)} \simeq W_{\sigma(\la_-)}(\ell(w_0)-\ell(\sigma)).$$
In addition, we have
$\ch \, W_{\sigma(\la_-)}(\ell(w_0)-\ell(\sigma))=w_0 E_{w_0\sigma\la_-}(x,q^{-1},\infty)$.
\end{thm}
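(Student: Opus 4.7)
The plan decomposes into two parts: the module isomorphism and the character identity.

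For the isomorphism $U_{\sigma(\la_-)} \simeq W_{\sigma(\la_-)}(\ell(w_0)-\ell(\sigma))$, I will match defining relations. Both $\I$-modules are cyclic with cyclic vector of $\fh$-weight $\sigma(\la_-)$ satisfying the common relations $\fh\T z\bC[z]\cdot u = 0$ and $e_{\widehat\sigma(-\al+\delta)+k\delta}\cdot u = 0$ for all $\al \in \Delta_+$ and $k \ge 0$. Setting $r_* := \ell(v(\la_-')) = \ell(w_0)-\ell(\sigma)$, it remains to compare the characteristic relations $e_{\widehat\sigma(\al)}^{l_{\al,r_*}+1}\cdot u = 0$ of $W_{\sigma(\la_-)}(r_*)$ with the power relations of $U_{\sigma(\la_-)}$. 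The key calculation is that, using the reduced decomposition \eqref{reddec}, the initial segment of length $r_*$ realizes the finite Weyl group factor $v(\la_-') = \sigma^{-1} w_0$, so that the multiset $\{-\bar\beta_j\}_{j=1}^{r_*}$ consists of precisely those positive coroots $\al^\vee$ with $\sigma\al \in \Delta_+$, each appearing exactly once (these are the inversions of $v(\la_-')$). Substituting this count into \eqref{l} yields $l_{\al, r_*} = -\bra\la_-, \al^\vee\ket - 1$ when $\sigma\al \in \Delta_+$ and $l_{\al, r_*} = -\bra\la_-, \al^\vee\ket$ when $\sigma\al \in \Delta_-$; together with the definition of $\widehat\sigma$, these recover exactly the remaining defining relations of $U_{\sigma(\la_-)}$, and the converse inclusion of relations is immediate from the definition.

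For the character identity, the plan is to apply Corollary \ref{combmodel} with $m = r_*$ to get
\[
\ch \, W_{\sigma(\la_-)}(r_*) = \sum_{p \in \mathcal{QB}_{\sigma,\la_-}(r_*)} x^{\wt(\mathrm{end}(p))}\, q^{\deg(qwt^*(p))}.
\]
By construction of \eqref{reddec}, the tail coroots $\beta_{r_*+1}(\la_-), \ldots, \beta_l(\la_-)$ indexing $\mathcal{QB}_{\sigma,\la_-}(r_*)$ come from a reduced decomposition of $u(\la_-')$, which is the minimal length representative in $t_{\la_-'}W$. I will then identify this combinatorial sum with the Orr--Shimozono quantum alcove path formula \cite{OS} for the non-symmetric Macdonald polynomial $E_{w_0\sigma\la_-}(x, q^{-1}, \infty)$, transported by $w_0$.

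The main obstacle will be the final combinatorial identification. The Orr--Shimozono formula expresses $E_\mu(x, q, t)|_{t = \infty}$ as a sum over quantum alcove paths determined by a reduced word for the translation element $t_\mu$ and a chosen starting point in $W$, and one must verify that the starting element $\sigma$ together with the tail of \eqref{reddec} produce exactly the Orr--Shimozono data for $E_{w_0\sigma\la_-}$ after conjugating by $w_0$ and substituting $q \mapsto q^{-1}$. This bookkeeping is delicate, but the setup of \eqref{reddec} through the factorization $t_{\la_-} = v(\la_-')u(\la_-')$ with $v(\la_-') = \sigma^{-1} w_0$ is designed precisely to produce the correct initial element after the $w_0$-twist, so the path weights $\wt(\mathrm{end}(p))$ and degree contributions $\deg(qwt^*(p))$ should match term by term.
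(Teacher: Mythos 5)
Your proposal takes essentially the same route as the paper's proof: the isomorphism is obtained by computing that $\{-\bar\beta_j^\vee\}_{j=1}^{r}$ (for $r=\ell(w_0)-\ell(\sigma)$, coming from the initial segment of \eqref{reddec}) is exactly $\Delta_+\cap\sigma^{-1}\Delta_+$ and then matching the relations \eqref{charrel} via \eqref{l} with the defining relations of $U_{\sigma(\la_-)}$, and the character identity follows from Corollary \ref{combmodel} together with the Orr--Shimozono formula for $E_{\la_-'}(x,q^{-1},\infty)$, using ${\rm dir}(u(\la_-'))=w_0\sigma$ and the left $w_0$-translation between the reversed quantum Bruhat graph and QBG. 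The only cosmetic slip is the parenthetical ``inversions of $v(\la_-')$'': the set in question is the (coroot) inversion set of $v(\la_-')^{-1}=w_0\sigma$, though your substantive description of it as $\{\al^\vee:\ \al\in\Delta_+,\ \sigma\al\in\Delta_+\}$ is precisely the correct one used in the paper.
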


\begin{rem}
The modules $W_{\sigma(\la_-)}$ depend only on $\sigma(\la_-)$, but not on $\sigma$ and $\la_-$
separately. Therefore, our choice of $(\sigma, \lambda_-)$'s cover the whole of $P$ is a bijective fashion.
\end{rem}

\begin{proof}[Proof of Theorem \ref{UWchar}]
We have to prove that the defining relations \eqref{charrel} of $W_{\sigma(\la_-)}(\ell(w_0)-\ell(\sigma))$ coincide
with the defining relations of $U_{\sigma(\la_-)}$. We set $r:=\ell(v(\la_-'))=\ell(\sigma^{-1}w_0)=\ell(w_0)-\ell(\sigma)$,
that is the cardinality of the set $\Delta_+\cap\sigma^{-1} \Delta_+$.

It suffices to show that $\{-\bar\beta_1^\vee,\dots,-\bar\beta_r^\vee\}=\Delta_+\cap\sigma^{-1} \Delta_+$. By definition, for $k=1,\dots,r$ we
have
\begin{eqnarray}
\beta_k & = & s_{i_M}\dots s_{i_{r+1}} s_{\pi^{-1}i_r}\dots s_{\pi^{-1}i_{k+1}} \al_{\pi^{-1}i_k}^\vee\\
&= & t_{-\la_-} \pi s_{\pi^{-1}i_1}\dots s_{\pi^{-1}i_{k-1}} s_{\pi^{-1}i_k}  \al_{\pi^{-1}i_k}^\vee \\
&= & t_{-\la_-} s_{i_1}\dots s_{i_{k-1}} (-\al_{i_k}^\vee).
\end{eqnarray}
The action of $t_{-\la_-}$ on $\Delta^{a}$ preserve the finite (bar) part of an affine coroot. Therefore, the negated finite parts of $\beta_1^\vee,\dots,\beta_r^\vee$ are exactly the positive roots which are mapped to negative roots by $w_0\sigma$. Therefore, the comparison of the defining equations yield
$$U_{\sigma(\la_-)} \simeq W_{\sigma(\la_-)}(r),$$
that is the first part of the assertion.

The Orr-Shimozono formula (\cite{OS}, Proposition 5.4) for the $t=\infty$ specialization of the
non-symmetric Macdonald polynomials asserts:
\begin{equation}\label{OS}
E_{\la_-'}(x,q^{-1},\infty)=\sum_{p\in \overleftarrow{\mathcal{QB}}(id; u(\la_-'))} x^{wt(p)} q^{\deg(qwt^*(p))} \hskip 10mm \la_-'\in P.
\end{equation}
Here we use the reduced decomposition $u({\la_-'})=\pi s_{i_{r+1}}\dots s_{i_M}$ and the corresponding coroots
$\beta_j$ in the definition of $\overleftarrow{\mathcal{QB}}(id; u(\la_-'))$.
In other words, $E_{\la_-'}(x,q^{-1},\infty)$ is equal to the sum over all paths in the reversed quantum Bruhat
graph with $z_0=u({\la_-'})$.
We note that ${\rm dir} (u({\la_-'}))= {\rm dir} (t_{\la_-'}v(\la_-')^{-1})=w_0\sigma$. We can pass an alcove path on the
reversed quantum Burhat graph to an alcove path on QBG by the left multiplication of $w_0$. Hence, the sum in \eqref{OS} multiplied by $w_0$ from the left
ranges over all paths in QBG starting at $\sigma$.

Corollary \ref{combmodel} gives the combinatorial formula of $\mathrm{ch} \, W_{\sigma(\la_-)}( r )$, that is identical to $w_0E_{w_0\sigma\la_-}(x,q^{-1},\infty)$ by \eqref{OS} through the above identification. Hence we
obtain
$$\ch \, W_{\sigma(\la_-)}(r)=w_0E_{w_0\sigma\la_-}(x,q^{-1},\infty)$$
as required.
\end{proof}

\begin{cor}\label{UE}
The character of $U_{\sigma(\la_-)}$ is equal to $w_0E_{w_0\sigma(\la_-)}(x,q^{-1},\infty)$.
Equivalently, we have $E_{\sigma(\la_-)}(x,q^{-1},\infty)=w_0\ch \, U_{w_0\sigma(\la_-)}$.
\end{cor}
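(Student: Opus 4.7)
The proof of this corollary is essentially a bookkeeping exercise given Theorem \ref{UWchar}. The running assumption of this subsection forces $\sigma \in W$ to be of maximal length in its coset $\sigma \cdot \mathrm{stab}_W(\la_-)$. Under this assumption, Theorem \ref{UWchar} directly delivers
\[
\ch \, U_{\sigma(\la_-)} = \ch \, W_{\sigma(\la_-)}(\ell(w_0)-\ell(\sigma)) = w_0 E_{w_0\sigma(\la_-)}(x,q^{-1},\infty),
\]
which is precisely the first displayed identity of the corollary. So the first half requires no work beyond invoking the theorem.

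For the equivalent reformulation, the plan is to apply the $w_0$-action to both sides. Using $w_0^2 = \mathrm{id}$ on $\bZ[P]$, the identity above becomes
\[
w_0 \ch \, U_{\sigma(\la_-)} = E_{w_0\sigma(\la_-)}(x,q^{-1},\infty).
\]
I would then relabel $\sigma \mapsto w_0 \sigma$: since $w_0 \sigma$ is the maximal length element of its coset if and only if $\sigma$ is the minimal length element of its own coset, and since on the polynomial side $E_{\bullet(\la_-)}$ depends only on the weight $\bullet(\la_-)$, the substitution is compatible with the running hypothesis upon re-choosing a maximal-length representative within the same coset. The result is the desired formula
\[
E_{\sigma(\la_-)}(x,q^{-1},\infty) = w_0 \ch \, U_{w_0\sigma(\la_-)}.
\]

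There is essentially no independent obstacle at the level of this corollary; the entire substance is absorbed by Theorem \ref{UWchar}, whose proof combines (a) the identification $U_{\sigma(\la_-)} \simeq W_{\sigma(\la_-)}(\ell(w_0)-\ell(\sigma))$ obtained by matching the defining relations through the reduced decomposition \eqref{reddec} and the analysis of $\Delta_+\cap\sigma^{-1}\Delta_+$, with (b) the combinatorial formula of Corollary \ref{combmodel} and the Orr--Shimozono formula \eqref{OS}. If I had to flag a point that still needs care, it is precisely the verification that the relabeling step above respects the maximality convention, but this is a routine check using $\ell(w_0 x)=\ell(w_0)-\ell(x)$.
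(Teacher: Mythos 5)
Your proposal is correct and matches the paper, which states Corollary \ref{UE} without further argument as an immediate consequence of Theorem \ref{UWchar}; your extra care about the relabeling $\sigma\mapsto w_0\sigma$ is exactly the right (routine) point, since both $\ch\, U_{\bullet(\la_-)}$ and $E_{\bullet(\la_-)}$ depend only on the weight, so one may re-choose the maximal-length coset representative without changing either side.
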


\begin{cor}\label{lambdasigma}
The algebra $A_{({\lambda_-}_\sigma)}$ acts freely on ${\mathbb U}_{\sigma(\la_-)}$ and
\[
\ch \, {\mathbb U}_{\sigma(\la_-)}= \ch \, U_{\sigma(\la_-)}/(q)_{({\lambda_-})_\sigma}.
\]
\end{cor}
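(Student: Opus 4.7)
My plan is to combine the character upper bound from Corollary \ref{U-free} with a matching lower bound, which I obtain by identifying $\mathbb U_{\sigma(\la_-)}$ with a generalized global Weyl module with characteristics and invoking Theorem \ref{decompositionglobal}\eqref{characters}. Once the character equality is established, the freeness assertion follows from the second clause of Corollary \ref{U-free}.

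First, I set $r := \ell(w_0) - \ell(\sigma)$ and fix the reduced decomposition \eqref{reddec} of $t_{\la_-}$. I then claim
\[
\mathbb U_{\sigma(\la_-)} \cong \mathbb W_{\sigma(\la_-)}(r),
\]
which is proved by a verbatim adaptation of the relation-matching in Theorem \ref{UWchar} to the global setting: the $\fh \otimes z\bC[z]$-vanishing relation is dropped on both sides, while the remaining power-type and $\widehat{\sigma}$-twisted defining relations coincide once the characteristic modification is applied. Combining this identification with Theorem \ref{decompositionglobal}\eqref{characters} applied to $\mu = \la_-$ (for which $\la_- - \mu = 0 \in P_-$ is immediate) and with the character identity $\ch \, W_{\sigma(\la_-)}(r) = \ch \, U_{\sigma(\la_-)}$ from Theorem \ref{UWchar}, I get
\[
\ch \, \mathbb U_{\sigma(\la_-)} = \frac{\ch \, U_{\sigma(\la_-)}}{(q)_{\la_- + \omega(r)}}.
\]

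The main remaining step is the combinatorial identification $\la_- + \omega(r) = (\la_-)_\sigma$. By the definition of $\omega(r)$ it suffices to pick out the simple coroots occurring in the set $\{-\bar\beta_k^\vee\}_{k=1}^r$. The computation at the end of the proof of Theorem \ref{UWchar} shows $-\bar\beta_k^\vee = s_{i_1}\cdots s_{i_{k-1}}\al_{i_k}^\vee$ for $1 \le k \le r$, where $v(\la_-') = s_{i_1}\cdots s_{i_r} = \sigma^{-1}w_0$; by the standard inversion-set formula these positive coroots are precisely those $\gamma^\vee$ such that $(v(\la_-')^{-1})\gamma = (w_0\sigma)\gamma \in \Delta_-$, i.e.\ such that $\sigma\gamma \in \Delta_+$. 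Their intersection with the simple coroots is exactly $\{\al_j^\vee : \sigma\al_j > 0\}$, each occurring once, whence $\omega(r) = \sum_{j:\,\sigma\al_j > 0}\om_j$ and $\la_- + \omega(r) = (\la_-)_\sigma$ in view of \eqref{la-sigma}. This finishes the character calculation, and Corollary \ref{U-free} then supplies freeness of the $A_{(\la_-)_\sigma}$-action. The only nontrivial bookkeeping in the whole plan is this last identification, but it reuses material already present in the proof of Theorem \ref{UWchar}.
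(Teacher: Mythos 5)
Your proposal is correct and follows essentially the same route as the paper: the paper likewise identifies $\ch \, {\mathbb U}_{\sigma(\la_-)}$ with $\ch \, \W_{\sigma(\la_-)}(\ell(w_0)-\ell(\sigma))$ via Theorem \ref{UWchar} and its proof, invokes Theorem \ref{decompositionglobal}\eqref{characters}, observes that the simple roots occurring among the $-\bar\beta_i$ are exactly those $\al_j$ with $\sigma\al_j\in\Delta_+$ so that the correction weight is $(\la_-)_\sigma$, and obtains freeness from Corollary \ref{U-free}. Your extra remarks (the explicit inversion-set computation and that each simple root occurs only once) just make explicit what the paper leaves implicit.
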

\begin{proof}
Theorem \ref{UWchar} and its proof imply
\[
\ch \, {\mathbb U}_{\sigma(\la_-)}=\ch \, \W_{\sigma(\la_-)}(\ell(w_0\sigma))=
\frac{\ch \, W_{\sigma(\la_-)}(\ell(w_0\sigma))}{(q)_{\nu}},
\]
where $\nu$ is obtained from $\la_-$ by adding all fundamental weights $\omega_j$ such that
the corresponding simple roots $\al_j$ show up as $-\bar\beta_i^\vee$ for $i=1,\dots,\ell(w_0)-\ell(\sigma)$
(see Theorem \ref{decompositionglobal}, \eqref{characters}). Such $\al_j$ are exactly the simple
roots mapped to $\Delta_+$ by $\sigma$. Hence, we conclude that $\nu=(\la_-)_\sigma$. This shows the character equality. The rest of the assertion is Corollary \ref{U-free}.
\end{proof}

\section{Global U-modules and sheaves on semi-infinite Schubert varieties}\label{SoSIS}
Let $\Q$ be the semi-infinite flag variety (see \cite{FiMi},\cite{BF1}). For an element $w\in W$
we denote by $\Q(w)\subset\Q$ the corresponding semi-infinite Schubert variety (see \cite{Kat}).
The varieties $\Q(w)$ are defined as follows. Let $X(w)\in G/B$ be the (finite-dimensional)
Scubert variety corresponding to the element $w$. Let ${\rm ev}_0:\Q_0\to G/B$ be the evaluation map
from the subvariety $\Q_0\subset\Q$ of no-defect quasi-maps to the flag variety of $G$.
By definition, $\Q(w)=\overline{{\rm ev}^{-1} (X(w))}$. In particular, we have an embedding $X ( w ) \subset
\Q(w)$ consisting of constant loops. Let us denote the unique $H$-fixed point of the dense open $B$-orbit of
$X(w)$ by $x_{w}$. We regard $x_{w}$ as a point in $X (w) \subset \Q(w)$.

\begin{rem}
The contents of this section can be also formulated by employing the formal model $\mathbf Q$ of the semi-infinite flag variety (instead of the ind-model) defined in \cite[\S 4.1]{FiMi} by assuming the results from \cite{BF1} and \cite[\S 4]{KNS}.
\end{rem}

We note that each semi-infinite Schubert variety inherits an ind-structure from $\Q$, i.e.
$\Q(w)=\cup_{\beta\in Q^\vee_+} \Q(w,\beta)$. Using the embedding
\[
\Q(w,\beta)\subset \prod_{i=1}^n \bP(V(\omega_i)\otimes \bC[z]_{\le \langle\beta,\omega_i\rangle})
\]
one gets for each $\lambda\in P$ the line bundle $\cO_w(\la)$ on  $\Q(w)$
(this is the projective limit of the line bundles $\cO_{w,\beta}(\la)$ on  $\Q(w,\beta)$).
We define the $i$-th cohomology of $\cO_{w}(\la)$ by
\[
H^i(\Q(w),\cO_w(\la)) :=\left( \varprojlim_{\beta}H^i(\Q(w,\beta),\cO_{w,\beta}(\la))
\right)_{\Gm\text{-finite}}.
\]

It is proved in \cite[Theorem 4.12]{Kat} that for $\la\in P_+$ one has
\begin{equation}
H^0(\Q(w),\cO_w(\la))^*\simeq {\mathbb W}_{w \la},\label{GammaO}
\end{equation}
where $*$ denotes the restricted dual and all the higher cohomologies vanish. Let us denote by $u_{w\la}$ the
$\I$-cyclic generator of $H^0(\Q(w),\cO_w(\la))^*$ that is fixed by the action of the loop rotation (such a vector is unique up to constant).
 In \cite[\S 6]{Kat}, the author constructs sheaves $\E_{w}(\la)$ on $\Q(w)$ such that
\[
\ch \, H^i(\Q(w),\E_{w}(\la))^*=\delta_{i,0} E_{-w(\la)}(x^{-1},q^{-1},\infty) \in \bC [P][\![q]\!],
\]
holds for each $\la \in P$, where $x^{-1}$ means the replacement of $e^\mu$ by $e^{-\mu}$ for each $\mu \in P$. Moreover, $H^0(\Q(w),\E_{w}(\la))^*$ is a cyclic $\I$-module (\cite[Lemma 6.7]{Kat}).

\begin{cor}
For a dominant weight $\la$ and $w\in W$ one has
\begin{equation}\label{GammaE}
\ch \, \Gamma(\Q(w),\E_{w}(\la))^*=\ch \, {\mathbb U}_{w\la}.
\end{equation}
\end{cor}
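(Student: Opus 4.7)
The plan is to compute both sides of the asserted equality as formal $q$-series in non-symmetric Macdonald polynomials at $t=\infty$ and to identify them via a Macdonald-polynomial identity from \cite{Kat}.

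By the character formula from \cite[\S 6]{Kat} recalled just above the statement, the left-hand side equals $E_{-w\la}(x^{-1},q^{-1},\infty)$. For the right-hand side, since $\mathbb U_{w\la}$ is insensitive to the choice of decomposition $w\la = \sigma(\la_-)$, I may assume $w$ is of maximal length in its $\mathrm{stab}_W(\la)$-coset. I then parametrise $w\la$ via $\la_- := w_0\la \in P_-$ and $\sigma := w_0 w w_0 \in W$: this $\sigma$ is the maximal length representative in its $\mathrm{stab}_W(\la_-)$-coset, $\sigma(\la_-) = w_0 w \la$, and $w_0\sigma(\la_-) = w\la$. Combining Corollary \ref{UE} with Corollary \ref{lambdasigma} for the decomposition $(\la_-,\sigma)$ and then acting by $w_0$ on both sides yields
\[
\ch \, \mathbb U_{w\la} \;=\; (q)^{-1}_{(\la_-)_\sigma}\cdot w_0 E_{w_0 w\la}(x, q^{-1}, \infty).
\]
By Remark \ref{><} the weight $w_0(\la_-)_\sigma$ equals $(\la)_{w_0 w}$ in the conventions of \cite{Kat}, and the Weyl-invariance of $\mu\mapsto (q)_\mu$ then gives $(q)_{(\la_-)_\sigma} = (q)_{(\la)_{w_0 w}}$.

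The desired equality therefore reduces to the Macdonald-polynomial identity
\[
E_{-w\la}(x^{-1}, q^{-1}, \infty) \;=\; (q)^{-1}_{(\la)_{w_0 w}}\cdot w_0 E_{w_0 w\la}(x, q^{-1}, \infty),
\]
which is exactly the content of \cite[Corollary 6.10]{Kat}. The main obstacle is precisely this last Cherednik-type duality, supplied externally by \cite{Kat}; all the representation-theoretic input on our side is already packaged in Corollaries \ref{UE} and \ref{lambdasigma}, and the remainder of the argument is bookkeeping through the bijection $(\la_-,\sigma)\leftrightarrow(\la_+,w)$ of Remark \ref{><}.
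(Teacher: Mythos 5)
Your strategy coincides with the paper's (compute $\ch \, {\mathbb U}_{w\la}$ from Corollaries \ref{UE} and \ref{lambdasigma}, quote the character of $\Gamma(\Q(w),\E_{w}(\la))^*$ from \cite{Kat}, and match via Remark \ref{><}), but the $(q)$-factor bookkeeping is wrong on both sides, and the identity you finally reduce to is false. On the left-hand side, the correct input is \cite[Corollary 6.10]{Kat} as used in \eqref{Gamma}: $\ch \, \Gamma(\Q(w),\E_{w}(\la))^*=(q)_{\la_w}^{-1}\cdot E_{-w\la}(x^{-1},q^{-1},\infty)$. The factor $(q)_{\la_w}^{-1}$ cannot be dropped: $E_{-w\la}(x^{-1},q^{-1},\infty)$ is a Laurent polynomial (see \eqref{OS}), while for $w=e$ one has $\E_{e}(\la)=\cO_{e}(\la)$ and $\Gamma(\Q(e),\cO_{e}(\la))^*\simeq {\mathbb W}_{\la}$ by \eqref{GammaO}, whose character is a genuine power series in $q$; so the un-normalized display you quoted cannot be taken at face value. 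On the right-hand side, the pair needed so that $\sigma(\la_-)=w\la$ with $\la_-=w_0\la$ is $\sigma=ww_0$, not $\sigma=w_0ww_0$: your $\sigma$ gives $\sigma(\la_-)=w_0w\la$, so Corollary \ref{lambdasigma} applied to $(\la_-,\sigma)$ computes $\ch \, {\mathbb U}_{w_0w\la}$, and ``acting by $w_0$ on both sides'' does not convert this into $\ch \, {\mathbb U}_{w\la}$ --- characters of these modules are not related by applying $w_0$. With the correct $\sigma=ww_0$, Remark \ref{><} gives $(q)_{(\la_-)_{ww_0}}=(q)_{\la_w}$, whereas your factor $(q)_{(\la)_{w_0w}}$ is different in general: already for $\g=\msl_2$, $\la=\om_1$, $w=e$, your formula predicts $\ch \, {\mathbb U}_{\om_1}=\ch \, U_{\om_1}$ (finite), contradicting Corollary \ref{lambdasigma}, which gives the factor $(1-q)^{-1}$.

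As a consequence, the identity you reduce to,
\[
E_{-w\la}(x^{-1},q^{-1},\infty)=(q)^{-1}_{(\la)_{w_0w}}\cdot w_0E_{w_0w\la}(x,q^{-1},\infty),
\]
is false (a Laurent polynomial cannot equal a nontrivial power series), and it is not the content of \cite[Corollary 6.10]{Kat}, which is the geometric character formula \eqref{Gamma}. Once both sides carry the same factor $(q)^{-1}_{\la_w}=(q)^{-1}_{(\la_-)_{ww_0}}$, the remaining polynomial-level input is the symmetry $E_{-w\la}(x^{-1},q^{-1},\infty)=w_0E_{w_0w\la}(x,q^{-1},\infty)$, which is a separate external ingredient, namely \cite[Lemma 5.2]{OS}; this is exactly how the paper completes the argument.
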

\begin{proof}
We set that $\sigma=ww_0$, and $\la_-=w_0\la$. Remark \ref{><} implies $(q)_{\la_w}=(q)_{(\la_-)_\sigma}$. By \cite[Corollary 6.10]{Kat}, we have an equality:
\begin{equation}\label{Gamma}
\ch \, \Gamma(\Q(w),\E_{w}(\la))^*=(q)_{\la_w}^{-1} \cdot E_{-w\la}(x^{-1},q^{-1},\infty).
\end{equation}
We see that
\[
E_{-w\la}(x^{-1},q^{-1},\infty)=w_0E_{w_0w\la}(x,q^{-1},\infty) = w_0E_{w_0\sigma\la_-}(x,q^{-1},\infty),
\]
where the first equality is \cite[Lemma 5.2]{OS}, and the second equality is by convention.

By Corollary \ref{UE}, we have
\[
\ch \, U_{w\la}=\ch \, U_{\sigma\la_-}=w_0E_{w_0\sigma\la_-}(x,q^{-1},\infty).
\]
Corollary \ref{lambdasigma} tells us that
$\ch \, {\mathbb U}_{\sigma\la_-}=(q)_{(\la_-)_\sigma}^{-1} \cdot \ch \, U_{\sigma\la_-}$. Using \eqref{Gamma}
and Remark \ref{><} we conclude that \eqref{GammaE} holds true.
\end{proof}

We briefly recall the construction of the sheaves $\E_{w}(\la)$ from \cite{Kat}.
Let $w=s_{i_1}\dots s_{i_l}$ be a reduced decomposition of $w$. Let ${\bf I}_k\supset {\bf I}$ be
the parabolic subgroup corresponding to $\alpha_{k}$ that contains the Iwahori group ${\bf I} \subset G ( \bC ( z ) )$. We define
\[
\Q({\bf i})={\bf I}_{i_1}\times_{\bf I}{\bf I}_{i_2}\times_{\bf I}\dots \times_{\bf I}{\bf I}_{i_l}\times_{\bf I} \Q(e),
\]
where the last factor is the smallest semi-infinite Schubert variety corresponding to the identity element $e\in W$. We set
\[
\gamma_{1} := \al_{i_{1}}, \gamma_{2} := s_{i_{1}} \al_{i_{2}}, \ldots, \gamma_{l} := s_{i_{1}} s_{i_{2}} \cdots
s_{i_{l-1}} \al_{i_{l}}.
\]
The roots $\gamma_i$ are distinct to each other and each of them belongs to $\Delta_{+}$ since our choice of $\bi$ is reduced.
Note that if we have a subexpression ${\bi}'$ of $\bi$, then we have natural embedding $\Q ({\bi}')
\hookrightarrow \Q ({\bf i})$ of the analogously defined variety by understanding that the elements from the missing factors $i_{j}$ to be belong to ${\bf I} \subset {\bf I}_{i_{j}}$. This particularly induces an inclusion $\Q ( e ) = \Q (\emptyset)
\hookrightarrow \Q ({\bf i})$. Hence, we can regard $x_{e}$ also as a point of $\Q ({\bf i})$.

One has the multiplication map \[q_{{\bf i}}:\Q({\bf i})\to \Q(w).\]
For each $1\le k\le l = \ell ( w )$, we consider the divisor $H_k\subset \Q({\bf i})$ defined by
\[
H_k=\{(g_1,\dots,g_l,x)\in \Q({\bf i}),\ g_k\in {\bf I}\}.
\]
Then the sheaf $\E_{w}(\la)$ on $\Q(w)$ is obtained by twisting the line bundle corresponding to $\la$ by the
divisors $H_k$ and pushing it down. Namely, we have
\[
\E_w(\la)=(q_{{\bf i}})_*\cO_{\Q({\bf i})}(\la-\sum_{j=1}^l H_k)=
(q_{{\bf i}})_*\cO_{\Q({\bf i})}(-\sum_{j=1}^l H_k)\otimes \cO_w(\la).
\]
We note that the sheaves $\E_w(\la)$ do not depend on the reduced decomposition of $w$ (\cite[Lemma 6.6]{Kat}).

The maps $q_{{\bf i}}$ satisfy the following important properties (\cite[Lemma 6.1 and Corollary 6.5]{Kat}):
\begin{align}\label{qpushO}
{\mathbb R}^k(q_{\bi})_*\cO_{\Q(\bi)} & =\delta_{k,0}\cO_{\Q(w)},\\\label{qpushE}
{\mathbb R}^k(q_{\bi})_*\cO_{\Q(\bi)}(-\sum_{k=1}^l H_k) &=0,\ k>0.
\end{align}

We also note that the embedding
\[
(q_\bi)_*\cO_{\Q(\bi)}(\la-\sum_{k=1}^l H_k)\subset (q_\bi)_*\cO_{\Q(\bi)}(\la)
\]
gives the embedding $\E_{w}(\la)\hookrightarrow \cO_{\Q_w}(\la)$. Hence \eqref{GammaO} yields an $\I$-module surjection
\begin{equation}\label{WtoG}
{\mathbb W}_{w \la}\to \Gamma(\Q(w),\E_w(\la))^*.
\end{equation}
We conclude that the module $\Gamma(\Q(w),\E_{w}(\la))^*$ is a cyclic $\I$-module that is a quotient
of the generalized global Weyl module ${\mathbb W}_{w\la}$.

\begin{lem}
There exists a surjection of $\I$-modules
\[
{\mathbb U}_{w\la}\to H^{0}(\Q(w),\E_w(\la))^*.
\]
\end{lem}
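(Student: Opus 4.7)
The plan is to establish the surjection by showing that the existing surjection $\mathbb{W}_{w\la} \twoheadrightarrow H^0(\Q(w), \E_w(\la))^*$ from \eqref{WtoG} factors through the natural cyclic surjection $\mathbb{W}_{w\la} \twoheadrightarrow \mathbb{U}_{w\la}$. To make this factorization explicit, set $\sigma := ww_0$ and $\la_- := w_0\la \in P_-$ so that $w\la = \sigma(\la_-)$.

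A direct comparison of the defining relations shows that $\mathbb{U}_{\sigma(\la_-)}$ is obtained from $\mathbb{W}_{\sigma(\la_-)}$ by imposing the strengthened annihilations
\[
(e_{\sigma(\al)} \otimes 1)^{-\langle \la_-,\al^\vee\rangle} u_{w\la} = 0 \quad \text{for every } \al \in \Delta_+ \text{ with } \sigma(\al) \in \Delta_+,
\]
i.e., with the exponent dropped by one relative to the corresponding relation in $\mathbb{W}_{\sigma(\la_-)}$. Hence the factorization reduces to verifying these strengthened equations for the cyclic vector $u_{w\la} \in H^0(\Q(w), \E_w(\la))^*$.

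To verify each such relation I would exploit the construction $\E_w(\la) = (q_\bi)_* \cO_{\Q(\bi)}(\la - \sum_k H_k)$ together with the identification
\[
\{\gamma_k = s_{i_1}\cdots s_{i_{k-1}}\al_{i_k}\}_{k=1}^{l} \;=\; \Delta_+ \cap w\Delta_- \;=\; \{\sigma(\al) : \al \in \Delta_+,\ \sigma(\al) \in \Delta_+\}.
\]
For each $k$, the Chevalley generator $e_{\gamma_k}$ acts infinitesimally as a motion transverse to the divisor $H_k \subset \Q(\bi)$. The twist by $-H_k$ in the sheaf imposes a first-order vanishing along $H_k$, and dualizing the action on sections to the action on $H^0(\Q(w), \E_w(\la))^*$, this vanishing strips exactly one power off the annihilation relation coming from the line bundle $\cO_w(\la)$; this converts the relation of $\mathbb{W}_{w\la}$ (with exponent $-\langle\la_-,\al^\vee\rangle + 1$) into the strengthened relation of $\mathbb{U}_{w\la}$ (with exponent $-\langle\la_-,\al^\vee\rangle$).

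The main obstacle will be this last infinitesimal analysis: one must check, root-by-root, that the dualized action of $e_{\sigma(\al)}$ on $u_{w\la}$ genuinely sees the vanishing along the specific divisor $H_k$ paired to $\gamma_k = \sigma(\al)$, and that no extraneous higher-order contributions intervene. Once this is confirmed, the surjection \eqref{WtoG} factors as desired, and the induced $\I$-module map $\mathbb{U}_{w\la} \to H^0(\Q(w), \E_w(\la))^*$ is automatically surjective since the original one from $\mathbb{W}_{w\la}$ is.
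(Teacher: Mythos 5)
Your reduction is exactly the paper's: set $\sigma=ww_0$, $\la_-=w_0\la$, use the surjection \eqref{WtoG} from ${\mathbb W}_{w\la}$, and observe that factoring through ${\mathbb U}_{w\la}$ amounts to checking the single family of strengthened relations $e_{\sigma(\al)}^{-\bra\la_-,\al^\vee\ket}v=0$ for $\al\in\Delta_+\cap\sigma^{-1}\Delta_+$ on the image $v$ of the cyclic vector; your identification of these roots with the $\gamma_k$'s attached to a reduced word for $w$ is also correct. But the step you yourself flag as ``the main obstacle'' is precisely the content of the lemma, and your proposal does not supply it: ``$e_{\gamma_k}$ acts as a motion transverse to $H_k$, and the twist by $-H_k$ strips one power'' is a heuristic, not an argument. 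Note that the divisors $H_k$ live on $\Q(\bi)$ while the relation is to be checked in the dual of sections of a pushforward to $\Q(w)$, so one must first use the vanishing \eqref{qpushE} (and \eqref{GammaO}) to identify $H^0(\Q(w),\E_w(\la))$ with $H^0(\Q(\bi),\cO(\la-\sum_k H_k))$, and then find a concrete mechanism that converts the divisor twist into exactly one fewer power of $e_{\gamma_j}$, with no interference from the other divisors.

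The paper carries this out by restricting to the explicit $T$-stable curves $\bP^1_j=\overline{N_{-\gamma_j}x_e}\subset\Q(\bi)$: one computes from the $H$-weight at $x_e$ and equivariance that $\cO_{\Q(\bi)}(\la)|_{\bP^1_j}\cong\cO(m)$ with $m=-\bra\la_-,\al^\vee\ket$, and that $H_k|_{\bP^1_j}$ is trivial for $k\neq j$ and is $\cO(1)$ for $k=j$ (this is what rules out your ``extraneous higher-order contributions''). Dualizing the restriction maps yields a commutative square relating $H^0(\bP^1_j,\cO(m))^*\to{\mathbb W}_{w\la}$ and $H^0(\bP^1_j,\cO(m-1))^*\to H^0(\Q(w),\E_w(\la))^*$, equivariant for $N_{\gamma_j}$. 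One then needs a representation-theoretic input you do not mention: inside ${\mathbb W}_{w\la}$ the span $U(\fb)u_{w\la}$ contains a finite-dimensional $\mathfrak{sl}(2)$-string for $\pm\gamma_j$ with $e_{\gamma_j}^{m}u_{w\la}\neq 0$ and $e_{\gamma_j}^{m+1}u_{w\la}=0$, which makes the left vertical map injective and identifies the cyclic vectors; since the cyclic vector of $H^0(\bP^1_j,\cO(m-1))^*$ is killed by $e_{\gamma_j}^{m}$, equivariance gives $e_{\gamma_j}^{m}v=0$. Until you supply this curve-restriction (or some equivalent) computation together with the nonvanishing $e_{\gamma_j}^{m}u_{w\la}\neq 0$, your argument establishes only the trivial part of the lemma (the surjection from ${\mathbb W}_{w\la}$), so as written it has a genuine gap rather than being a complete proof.
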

\begin{proof}
We write $w\la$ as $\sigma \la_-$ for $\sigma=ww_0$, and $\la_-=w_0\la$.
Using the surjection \eqref{WtoG}, we only need to check that the relations
\[
e_{\sigma(\al)}^{-\bra \la_-,\al^\vee\ket}v=0, \ \al\in\Delta_+\cap \sigma^{-1}\Delta_+
\]
hold in $\Gamma(\Q(w),\E_w(\la))^*$, where $v$ is the image of $u_{w\la}$ (these are exactly the relations one
has to add to the defining relations of ${\mathbb W}_{w\la}$ in order to get the module ${\mathbb U}_{w\la}$).
Note that the ($H\times\Gm$-eigen) dual vector of $v$ (or $u_{w\la}$) corresponds to a constant function
$1_{-w\la_{-}}$ on the dense $\bf I$-orbit of $\Q(w)$.

We have an inclusion
$$N_{-\gamma_{1}} \times N_{-\gamma_{2}} \times \cdots \times N_{-\gamma_{l}} \times {\bf I} \subset
{\bf I}_{i_{1}} \times_{\bf I} {\bf I}_{i_{2}} \times_{\bf I} \cdots \times_{\bf I} {\bf I}_{i_{l}},$$
where $N_{\pm \gamma_{j}}$ is the one-dimensional unipotent subgroup of $G(z)$ so that
$\mathrm{Lie} \, N_{\pm \gamma_{j}} \subset \gh$ has $\h$-weight $\pm \gamma_{j}$, respectively.

We consider a curve $\bP^1_{j} \subset \Q(\bi)$ defined as the closure of the affine line $N_{- \gamma_j}x _{e} \subset \Q(\bi)$. We refer this curve as $\bP_{j}^1$ (it is isomorphic to $\bP^{1}$). Since $H$ and $N_{\gamma_{j}}$ fixes $x_{e}$, it follows that $\bP_{j}^1$ is equivariant with respect to the $N_{\gamma_{j}}$-action. Thus, $\bP_{j}^1$ decomposes into the disjoint union of a point $\{ x_{e} \}$ and a $N_{\gamma_{j}}$-orbit isomorphic to $\bA^{1}$.

Our curve $\bP_{j}^1 \subset \Q(\bi)$ is naturally contained in $\Q(\{i_{j},\ldots,i_{l}\})$ so that $\bP_{j}^1 \cap \Q(\{i_{j+1},\ldots,i_{l}\}) = \{ x_{e} \}$. Since $\cO_{\Q(\bi)} (\la)$ is determined by the $H$-character at $x_{e}$ and is equivariant with respect to the group action, it follows that the restriction of $\cO(\la)$ to $\bP^{1}_{j}$ is
$\cO(m)$, where $m=-\bra \la_-,\al^\vee\ket$. The restriction of $H_{k}$ to $\bP_{j}^{1}$ is non-zero if and only if $j=k$, and it defines $\cO (1)$ when $j = k$.

Therefore, we restrict the sheaves $\cO_{\Q(\bi)}(\la)$ and $\cO_{\Q(\bi)}(\la-\sum_{k=1}^l H_k)$
to $\bP^1_{j}$ to obtain the following maps:
\begin{gather*}
H^0(\Q(\bi),\cO_{\Q(\bi)}(\la))\to H^0(\bP^1_{j},\cO(m)),\\
H^0(\Q(\bi),\cO_{\Q(\bi)}(\la-\sum_{k=1}^l H_k))\to H^0(\bP^1_{j},\cO(m-1)).
\end{gather*}
These maps are equivariant with respect to the $N_{\gamma_{j}}$-action. The former map is non-zero since
$1_{-w\la_{-}}$ induces a non-vanishing section of both of them. The section $1_{-w\la_{-}}$ also induces an $\I$-cocyclic vector of $H^0(\Q(\bi),\cO(\la))$ and a $N_{\gamma_{j}}$-cocyclic vector in $H^0(\bP^1,\cO(m))$. By using the embedding
$\cO(\la-\sum_{k=1}^l H_k)\hookrightarrow \cO(\la)$ and dualizing all the pieces, we obtain the commutative
diagram
\[
\xymatrix{
H^0(\bP^1,\cO(m))^* \ar[d]_{\kappa} \ar@{->>}[r] & H^0(\bP^1,\cO(m-1))^*\ar[d]^{\kappa'}\\
{\mathbb W}_{w(\la)}\ar@{->>}[r] & H^0(\Q(w),\E_w(\la))^*
}
\]
from \eqref{GammaO} and \eqref{qpushE}. Here all the spaces have common cyclic vector (with respect to the $N_{\gamma_{j}}$-action in the top line, and with respect to the $\I$-action in the bottom line) induced by $u_{w\la}$.

Note that $U(\fb) u_{w\la} \subset {\mathbb W}_{w(\la)} \subset {\mathbb W}_{\la}$ spans a Demazure submodule of
$\g$. In particular, the span of $\{e_{\gamma_{j}}^{n} u_{w\la}\}_{n \ge 0}$ constitutes a representation of
$\mathfrak{sl} ( 2 )$ corresponding to the (not necessarily simple) roots $\pm \gamma_{j}$. In particular, we deduce that
$$e_{\gamma_{j}}^{m} u_{w\la} \neq 0 \hskip 5mm \text{and} \hskip 5mm  e_{\gamma_{j}}^{m+1} u_{w\la} = 0.$$
This implies that the map $\kappa$ is injective.

By constriction, the $N_{\gamma_{j}}$-cyclic $H$-eigenvector of $H^0(\bP^1,\cO(m-1))^*$ is annihilated by
$e_{\gamma_{j}}^{m}$ as the corresponding cyclic vector is annihilated by $e_{\gamma_{j}}^{m+1}$ in
$H^0(\bP^1,\cO(m))^*$. Sending it through $\kappa'$, the above commutative diagram asserts that
$e_{\gamma_{j}}^{m} v = 0$. This proves our Lemma.
\end{proof}

Recall that the star multiplication on $W$ is defined by $s_i*w=s_iw$ if $\ell(s_iw)=\ell(w)+1$ and $s_i*w=w$ otherwise ($i \in I$). This makes $(W, *)$ into a monoid. For each $k=1,\dots,l$, let $w[k]=s_{i_1}*\dots *s_{i_{k-1}} *s_{i_{k+1}}*\dots*s_{i_l}$.

\begin{lem}
Consider the embedding $$\varphi_k:H^0(\Q(\bi),\cO(\la-H_k))\hookrightarrow H^0(\Q(\bi),\cO(\la)).$$
Then ${\rm ker}\,\varphi_k^*\subset H^0(\Q(\bi),\cO(\la))^*\simeq {\mathbb W}_{w\la}$ is equal to
${\mathbb W}_{w[k]\la}$.
\end{lem}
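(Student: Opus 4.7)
The plan is to use the standard short exact sequence associated to the divisor $H_k$ and then identify the resulting cokernel with a smaller Bott--Samelson-like model.

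First, I would consider the exact sequence of sheaves on $\Q(\bi)$
\[
0 \to \cO_{\Q(\bi)}(\la - H_k) \to \cO_{\Q(\bi)}(\la) \to \cO_{H_k}(\la) \to 0
\]
and take global sections. Provided $H^1(\Q(\bi),\cO(\la - H_k))$ vanishes (which should follow from the same geometric argument underlying \eqref{qpushE}, applied after pushing down along $q_\bi$, since $H^1$-contributions from the divisor can be controlled by the vanishing of the higher direct images of $\cO(-\sum H_j)$), I obtain a short exact sequence
\[
0 \to H^0(\Q(\bi), \cO(\la - H_k)) \xrightarrow{\varphi_k} H^0(\Q(\bi), \cO(\la)) \to H^0(H_k, \cO(\la)|_{H_k}) \to 0.
\]
Dualizing in the $(H\times\Gm)$-restricted sense gives $\ker \varphi_k^* \simeq H^0(H_k, \cO(\la)|_{H_k})^*$, so the task reduces to computing the right-hand side.

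Second, I would identify $H_k$ geometrically. Since $H_k$ parametrizes those tuples with $g_k \in {\bf I}$, the $k$-th factor collapses and one obtains an isomorphism $H_k \cong \Q(\bi[k])$, where $\bi[k] = (i_1,\ldots,\widehat{i_k},\ldots,i_l)$, with the restriction of $\cO_{\Q(\bi)}(\la)$ matching the analogously defined line bundle on $\Q(\bi[k])$ (the character at the base point $x_e$ is preserved by the collapse). Now consider the multiplication map $q_{\bi[k]}: \Q(\bi[k]) \to \Q(w[k])$. The key input is that this map satisfies the exact analogue of \eqref{qpushO}: namely $(q_{\bi[k]})_* \cO_{\Q(\bi[k])} = \cO_{\Q(w[k])}$ and higher direct images vanish. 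This is the content of the semi-infinite analogue of the fact that Bott--Samelson morphisms are rational resolutions; it holds even when $\bi[k]$ fails to be reduced, precisely because the star product $w[k]$ records the image, while the extra factors $s_{i_j}$ with $\ell(s_{i_j} * \cdots) = \ell(\cdots)$ contribute $\bP^1$-fibers whose higher cohomology vanishes for the structure sheaf.

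Combining the projection formula with this pushforward computation yields
\[
H^0(\Q(\bi[k]), \cO(\la)) \cong H^0(\Q(w[k]), \cO_{w[k]}(\la)),
\]
and then applying \eqref{GammaO} to $w[k]$ identifies the dual with $\mathbb W_{w[k]\la}$. This gives the required isomorphism $\ker \varphi_k^* \simeq \mathbb W_{w[k]\la}$. The main obstacle I anticipate is the rigorous justification of the pushforward property $(q_{\bi[k]})_* \cO_{\Q(\bi[k])} = \cO_{\Q(w[k])}$ and the vanishing $H^1(\Q(\bi),\cO(\la - H_k)) = 0$ in the non-reduced case, both of which require care since $\Q$ is an ind-variety and the Bott--Samelson-type formalism must be transported to the semi-infinite setting (as carried out in \cite{Kat}); once these geometric inputs are in place, the identification proceeds by direct computation as sketched.
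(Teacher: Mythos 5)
Your proposal is correct and follows essentially the same route as the paper: the ideal-sheaf exact sequence of the divisor $H_k$ (identified with the sub-variety $\Q(\bi')$ obtained by deleting $i_k$), vanishing/exactness on $H^0$, and then the pushforward of the structure sheaf along the (possibly non-reduced) multiplication map onto $\Q(w[k])$ combined with \eqref{GammaO}. The two geometric inputs you flag as needing care are exactly what the paper supplies by citation, namely \cite[Proposition 6.4]{Kat} for the cohomological exactness and \cite[Lemma 6.1]{Kat} together with the star-product description of the image for the pushforward.
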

\begin{proof}
For any $k=1,\dots,l$, we have the following exact sequence of sheaves:
\[
0\to\cO_{\Q(\bi)}(-H_k)\to \cO_{\Q(\bi)}\to \cO_{\Q(\bi')}\to 0,
\]
where $\bi'$ corresponds to omitting $i_k$ in $\bi = \{ i_{j} \}_{j}$. Note that
$$H^{i}(\Q(\bi'),\cO_{\Q(\bi')}(\la)) = H^{i}(\Q(\bi),\cO_{\Q(\bi')}(\la))$$
for each $i \ge 0$ as $\Q(\bi') \subset \Q(\bi)$ is a closed (ind-)subvariety. In view of \cite[Proposition 6.4]{Kat}, we apply $H^{0}(\cdot)^*$ to obtain
\[
0\to H^{0}(\Q(\bi'),\cO_{\Q(\bi')}(\la))^{*}\to H^{0}(\Q(\bi),\cO_{\Q(\bi)}(\la))^{*}\to
H^{0}(\Q(\bi),\cO_{\Q(\bi)}(\la-H_k))^{*}\to 0.
\]
By \cite[Lemma 6.1]{Kat} and the fact that the multiplication map of $\Q(\bi') \subset \Q(\bi)$ lands
exactly on $\Q(w[k])$, we deduce
\[
H^{0}(\Q(\bi'),\cO_{\Q(\bi')}(\la))^{*} \cong H^{0}(\Q(w[k]),\cO_{\Q(w[k])}(\la))^{*} \cong {\mathbb
W}_{w[k]\la}
\]
as required.
\end{proof}

\begin{thm}\label{g-real}
The $\I$-modules surjection ${\mathbb U}_{w\la}\to \Gamma(\Q(w),\E_w(\la))^*$ is an isomorphism.
\end{thm}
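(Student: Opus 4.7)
The plan is essentially immediate from what has been assembled just before the statement. The preceding lemma produces a surjection of $\I$-modules
\[
\varphi : {\mathbb U}_{w\la} \twoheadrightarrow \Gamma(\Q(w),\E_w(\la))^{*},
\]
and the corollary just after \eqref{GammaE} establishes the character equality
\[
\ch \, \Gamma(\Q(w),\E_{w}(\la))^{*} = \ch \, {\mathbb U}_{w\la},
\]
obtained by combining \cite[Corollary 6.10]{Kat}, the Orr--Shimozono identification via Corollary \ref{UE}, and the free $A_{(\la_{-})_{\sigma}}$-action statement of Corollary \ref{lambdasigma}. Given these two ingredients, the strategy is to observe that the kernel of $\varphi$ is a graded $\I$-submodule whose character vanishes, and therefore is zero.

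To make this rigorous, I would first verify that both sides are graded $\I$-modules with well-defined characters lying in $\bZ[P][\![q]\!]$, so that character cancellation on a short exact sequence is legitimate. Both modules are cyclic $\I$-modules generated by a weight vector of weight $w\la$, and both are quotients of the generalized global Weyl module ${\mathbb W}_{w\la}$ (${\mathbb U}_{w\la}$ by definition, and $\Gamma(\Q(w),\E_w(\la))^{*}$ via the surjection \eqref{WtoG} together with the factorization through $\E_w(\la) \hookrightarrow \cO_{\Q_w}(\la)$). Since ${\mathbb W}_{w\la}$ has finite-dimensional weight spaces in each graded piece (its character is $(q)_{\la}^{-1} \cdot w_{0} E_{w_{0}\la}(x,q^{-1},\infty)$ by the theorem cited in \S\ref{Umod}), the same holds for both quotients, and in particular $\ch$ is additive on short exact sequences of such modules.

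Given the surjection $\varphi$ and the character equality $\ch \, {\mathbb U}_{w\la} = \ch \, \Gamma(\Q(w),\E_w(\la))^{*}$, additivity of $\ch$ on the short exact sequence
\[
0 \longrightarrow \ker \varphi \longrightarrow {\mathbb U}_{w\la} \stackrel{\varphi}{\longrightarrow} \Gamma(\Q(w),\E_w(\la))^{*} \longrightarrow 0
\]
forces $\ch \, \ker \varphi = 0$, hence $\ker \varphi = 0$, and therefore $\varphi$ is an isomorphism. There is no substantive obstacle here; all the hard work has already been carried out in Theorem \ref{UWchar}, Corollary \ref{lambdasigma}, and the cited results of \cite{Kat}, and the present statement is essentially the combinatorial shadow cast by those two calculations meeting.
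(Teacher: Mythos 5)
Your argument is correct, but it follows a genuinely different route from the paper. You combine the surjection ${\mathbb U}_{w\la}\twoheadrightarrow \Gamma(\Q(w),\E_w(\la))^*$ of the preceding lemma with the character identity \eqref{GammaE}, and kill the kernel by a character count; this is legitimate and non-circular, since \eqref{GammaE} is established before and independently of Theorem \ref{g-real} (from \cite[Corollary 6.10]{Kat} together with Corollaries \ref{UE} and \ref{lambdasigma}), and since both modules are cyclic, hence finitely generated, graded $\I$-modules, so their $(\fh,q)$-bigraded components are finite dimensional and $\ch$ is additive on short exact sequences. The paper instead argues structurally: using $\E_w(\la)=(q_{\bi})_*\cO_{\Q(\bi)}(\la-\sum_k H_k)$ and \eqref{qpushE}, it identifies $H^0(\Q(\bi),\cO(\la-\sum_{k} H_k))$ with $\bigcap_{k} H^0(\Q(\bi),\cO(\la-H_k))$ inside $H^0(\Q(\bi),\cO(\la))$, so that $\Gamma(\Q(w),\E_w(\la))^*\simeq {\mathbb W}_{w\la}/\sum_{k}\ker\varphi_k^*={\mathbb W}_{w\la}/\sum_{k}{\mathbb W}_{w[k]\la}$, and then matches this with the presentation ${\mathbb U}_{w\la}={\mathbb W}_{w\la}/\sum_{\al\in \Delta_+\cap\sigma^{-1}\Delta_+}{\mathbb W}_{s_{\sigma(\al)}w\la}$ by proving the equality of the two sums of submodules (Lemma \ref{max-repr}). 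What the paper's proof buys beyond the bare isomorphism is precisely this explicit presentation of ${\mathbb U}_{w\la}$ (equivalently of $\Gamma(\Q(w),\E_w(\la))^*$) as a quotient of ${\mathbb W}_{w\la}$ by a concrete sum of smaller global generalized Weyl modules; that description is invoked again later (e.g.\ in the proof of Proposition \ref{ind}), so your shorter character-theoretic proof, while fully adequate for the statement itself, would not supply that subsequent input.
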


\begin{proof}
We have the following equality, where all the spaces are considered as subspaces of $H^0(\Q(\bi),\cO(\la))$:
\[
H^0(\Q(\bi),\cO(\la-\sum_{k=1}^l H_k))=\bigcap_{k=1}^l H^0(\Q(\bi),\cO(\la-H_k)).
\]
We conclude that
\[
H^0(\Q(\bi),\cO(\la-\sum_{k=1}^l H_k))^*\simeq \frac{H^0(\Q(\bi),\cO(\la))^*}{\sum_{k=1}^l {\rm
ker}\varphi_k^*}.
\]
So, theorem follows from the equality
\begin{equation}\label{w[k]}
\sum_{k=1}^l{\mathbb W}_{w[k] \la}=\sum_{\al\in \triangle_+\cap \sigma^{-1}\triangle_+} {\mathbb
W}_{s_{\sigma (\al)} w\la},
\end{equation}
where all the spaces are considered as subspaces of ${\mathbb W}_{w\la}$ (that is isomorphic to
$H^0(\Q(\bi),\cO(\la))^{*}$). Indeed, one has
\[
{\mathbb U}_{w\la}=\frac{{\mathbb W}_{w\la}}{\sum_{\al\in \triangle_+\cap \sigma^{-1}\triangle_+} {\mathbb
W}_{s_{\sigma (\al)} w\la}}.
\]
We prove \eqref{w[k]} in a separate lemma below.
\end{proof}

\begin{lem}\label{max-repr}
Let $\la$ be a dominant weight. Then for any element $w=\sigma w_0\in W$ so that $\ell(w)=l$,
we have the following equality of the subspaces of ${\mathbb W}_{w\la}$:
\[
\sum_{k=1}^l{\mathbb W}_{w[k]\la}=
\sum_{\al\in \triangle_+\cap \sigma^{-1}\triangle_+} {\mathbb W}_{s_{\sigma(\al)}w\la},
\]
\end{lem}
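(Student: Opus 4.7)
The plan is to recast this equality as a comparison in the Bruhat order on the interval $[e,w] \subseteq W$, leveraging the geometric identification ${\mathbb W}_{v\la} \simeq H^0(\Q(v),\cO_v(\la))^*$ from \cite{Kat}. For $v \le v' \le w$ in Bruhat order, the closed inclusion of Schubert varieties $\Q(v) \subseteq \Q(v')$ induces a surjection on $H^0(-,\cO(\la))$ (using the higher cohomology vanishing already invoked in this section), producing compatible submodule embeddings ${\mathbb W}_{v\la} \subseteq {\mathbb W}_{v'\la} \subseteq {\mathbb W}_{w\la}$. So the family $\{{\mathbb W}_{v\la}\}_{v\le w}$ picks out a submodule lattice inside ${\mathbb W}_{w\la}$ which is governed by the finite Bruhat order. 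Now fix a reduced expression $w = s_{i_1}\cdots s_{i_l}$ and set $\gamma_k := s_{i_1}\cdots s_{i_{k-1}}\alpha_{i_k}$ for $1 \le k \le l$; then $\{\gamma_k\}_{k=1}^l = \Delta_+ \cap w\Delta_- = \Delta_+ \cap \sigma\Delta_+$ (since $w = \sigma w_0$), and $\alpha \mapsto \sigma\alpha$ gives a bijection from $\Delta_+ \cap \sigma^{-1}\Delta_+$ onto $\{\gamma_k\}$. A direct cancellation calculation yields $s_{\sigma(\alpha)} w = s_{\gamma_k} w = s_{i_1}\cdots \widehat{s_{i_k}} \cdots s_{i_l}$ (ordinary product after deleting the $k$-th letter), while $w[k]$ is by definition the Demazure product of the same subword.

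With this setup, the inclusion $\mathrm{RHS} \subseteq \mathrm{LHS}$ is immediate: the ordinary product of a word is always $\le$ its Demazure product in the Bruhat order (short induction on word length using the $0$-Hecke rule $s*x \in \{sx, x\}$), hence $s_{\gamma_k} w \le w[k]$ for each $k$ and therefore ${\mathbb W}_{s_{\gamma_k} w\la} \subseteq {\mathbb W}_{w[k]\la}$. For the opposite inclusion $\mathrm{LHS} \subseteq \mathrm{RHS}$, one observes $\ell(w[k]) \le l - 1 < l = \ell(w)$, so $w[k] < w$ strictly; by the graded (chain) property of the Bruhat order there exists a cover $u \lessdot w$ with $w[k] \le u$. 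Every cover of $w$ has the form $s_\delta w$ for some $\delta \in \mathrm{Inv}(w) = \{\gamma_m\}_{m=1}^l$, so $u = s_{\gamma_m} w$ for some $m$, yielding ${\mathbb W}_{w[k]\la} \subseteq {\mathbb W}_{s_{\gamma_m} w\la}$, which is a summand of the RHS.

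The main obstacle will be making rigorous the claim that the collection $\{{\mathbb W}_{v\la}\}_{v\le w}$ embeds coherently into ${\mathbb W}_{w\la}$ as a Bruhat-order-compatible submodule lattice, i.e., verifying that embeddings arising from different Schubert inclusion chains $\Q(v) \subseteq \Q(v') \subseteq \Q(w)$ and $\Q(v) \subseteq \Q(v'') \subseteq \Q(w)$ agree as subspaces of ${\mathbb W}_{w\la}$. This should follow from uniqueness up to scalar of the $H$-weight $v\la$ eigenline inside ${\mathbb W}_{v\la}$ (in particular, the image of any such embedding is determined by the image of the cyclic vector) combined with the cyclicity of the module, so each embedding is canonical up to normalization; nevertheless it warrants an explicit check once the geometric framework of $\Q(w)$ has been set up.
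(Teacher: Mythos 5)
Your proposal is correct and takes essentially the same route as the paper: both arguments rest on the Bruhat-order-compatible lattice of submodules ${\mathbb W}_{v\la}\subseteq{\mathbb W}_{w\la}$ for $v\le w$ (this is precisely \cite[Theorem 4.12 (2)]{Kat}, which the paper invokes elsewhere, so the "obstacle" you flag is already supplied by the cited geometric input) together with elementary Bruhat/Coxeter combinatorics. The paper organizes that combinatorics by passing to the maximal summands on each side and matching both with the covers $s_{\gamma}w\lessdot w$, which is the same content as your two containments via the ordinary-versus-Demazure products of the deleted subwords and domination by a cover.
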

\begin{proof}
We first rewrite the left hand side. We take the maximal modules among the summands and conclude that the left
hand side is equal to
the sum over such $k=1,\dots,l$ such that $\ell(w[k])=l-1$ (i.e. after removing the $k$-th factor in the
reduced decomposition of $w$ we
still obtain a reduced expression). This is equivalent to saying that the left hand side is equal to the sum of
the global generalized Weyl
modules $\W_{s_\gamma w\la}$ such that there exists an edge $w^{-1}s_\gamma\to w^{-1}$ in the classical Bruhat
graph.

Now let us consider the right hand side. Taking the maximal summands, we only consider $\al$ such that there
exists an edge
$\sigma\to \sigma s_\al$ in the classical Bruhat graph (and we still have $\sigma (\al) \in\Delta_+$). Now let
$\gamma=\sigma(\al)$. Then
the right hand side is equal to the sum of the global generalized Weyl modules $\W_{s_\gamma w\la}$ such that
there is an edge
(recall $w=\sigma w_0$) from $ww_0$ to $s_\gamma ww_0$. Now taking inverse elements and multiplying by $w_0$, we
obtain that the
summands correspond to the edges  $w^{-1}s_\gamma\to w^{-1}$ in the classical Bruhat graph. This proves the
lemma.
\end{proof}

\section{Duality of local and global modules}\label{cat}
Throughout this section, we assume that $\g$ is of type $ADE$. In particular, $W^a$ is the affine Weyl group of $\g$. We extend the integral weight lattice $P$ of $\g$ to a weight lattice $P^{a}$ of the untwited affine Kac-Moody algebra $\gh$ corresponding to the simple Lie algebra $\g$ as
$$P^{a} := P \oplus \bZ \Lambda _{0} \oplus \bZ \delta,$$
where we regard $P \oplus \bZ \delta$ as the set of level zero integral weights, and $\Lambda_{0}$ is the level one basic fundamental weight, and $\delta$ is the primitive null-root. We denote by $\wh$ the Cartan subalgebra of $\gh$, and denote by $\al_{0}$ the affine simple root of $\gh$ (with its coroot $\al_{0}^{\vee}$). Let $s_{0} \in W^{a}$ be the simple reflection corresponding to $\al_{0}$. We set $I^{a} := \{0,1,\ldots,n\}$. We have
$$\{\al_{i}\}_{i \in I} \subset \{\al_{i}\}_{i \in I^{a}} \subset P^{a} \subset \wh^{*}.$$

We have a reduced expression
\begin{equation}
u(\la)= s_{i_1} s_{i_2} \dots s_{i_\ell} \pi,
\end{equation}
where $\pi$ is a length zero element in the affine Weyl group. Let $\Lambda := \pi \Lambda_{0}$. We have a level
one integrable highest weight representation $L ( \Lambda )$ associated to $\Lambda$, and $u({\la})$ defines a
Demazure submodule $D _{\la}$ of $L ( \Lambda )$ corresponding to $u({\la})\pi^{-1} \in W^{a}$. By its definition, $D_{\la}$ is a finite-dimensional $\wh$-semisimple $\I$-module. Moreover, it has a cyclic vector of weight $\la + \Lambda_{0}$ by our type $ADE$ assumption.

In addition, we regard the module $\mathbb U_{- \la}$ as a module whose cyclic vector has weight $- \la -
\Lambda_{0}$ (that is possible as the defining equation as $U ( \mathfrak n^{af} )$-modules completely
determines the structure of $\mathbb U_{-\la}$ up to $\wh$-weight twists; see Remark
\ref{rem:cartan}).

\begin{thm}[Sanderson-Ion \cite{S,I}]\label{SI}
We have $\ch\,D_{\la}=E_{\la}(x,q,0)$.
\end{thm}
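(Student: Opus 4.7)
The plan is a Demazure-DAHA induction on the length $\ell(u(\la))$ in the extended affine Weyl group $W^e$, matching the Demazure character recursion for $D_\la$ with Cherednik's intertwiner recursion for $E_\la(x,q,0)$.

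For the base case, when $\la = 0$ one has $u(0) \in \Pi$ of length zero, so $D_0 = \bC v_\Lambda$ is one-dimensional with trivial $\fh$-weight, matching $E_0(x,q,0) = 1$. To treat a general antidominant $\la \in P_-$ as a refined base, I would invoke the type $ADE$ identification of the level one affine Demazure module $D_\la$ with a local Weyl module (cf.~\cite{FL} and Remark \ref{WDU-rel}); the character of the latter is known to equal $E_\la(x,q,0)$ by the global Weyl module description already recalled for ${\mathbb W}(\la)$ in the paper. Alternatively, one could handle this base directly inside the basic representation $L(\Lambda_0)$ by a string calculation.

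For the inductive step, pick a reduced expression $u(\la) = s_{i_1} s_{i_2} \cdots s_{i_\ell} \pi$ and apply the Demazure character formula to obtain
\[
\ch D_\la = (D_{i_1} D_{i_2} \cdots D_{i_\ell} e^{\Lambda})|_{\fh},
\]
where $D_i$ denotes the standard Demazure operator on $\bZ[P^a]$ attached to $s_i$, $i \in I^a$. On the Macdonald side, I would use the DAHA intertwiners $T_i$, whose $t = 0$ degenerations send $E_{\la'}(x,q,0)$ to $E_{s_i \la'}(x,q,0)$ whenever $\ell(u(s_i \la')) = \ell(u(\la'))+1$. Composing along the chosen reduced word and starting from $E_0 = 1$ produces $E_\la(x,q,0)$.

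The hard part will be verifying that for each $i \in I^a$ the operator $D_i$ coincides with the $t = 0$ degenerated intertwiner on the relevant subspace of $\bC[P]((q))$. For $i \in I$ this reduces to a finite $\msl_2$-calculation using the triple attached to $\al_i$, where both sides act as the standard Demazure operator on $\bZ[P]$. For $i = 0$ one must exploit the decomposition $s_0 = t_\theta s_\theta$ with $\theta$ the highest root: the reflection part is again an $\msl_2$-calculation, while the translation $t_\theta$ manifests on the Macdonald side as a $q$-shift intrinsic to $T_0$ at $t = 0$, and must be matched with the contribution of $\al_0 = \delta - \theta$ to $D_0$. The simply-laced hypothesis is what ensures the multiplicities and the combinatorial factors align in this final matching.
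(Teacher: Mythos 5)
The paper does not prove this statement at all: Theorem \ref{SI} is quoted verbatim from Sanderson \cite{S} and Ion \cite{I}, so there is no internal argument to compare against. What you have written is, in outline, precisely the route of Ion's original proof (induction along a reduced expression of $u(\la)$, matching the Demazure character recursion for the level-one modules with Cherednik's intertwiner recursion degenerated at $t=0$), so the strategy is the right one. Two caveats, though. First, your proposed ``refined base'' for antidominant $\la$ is circular inside this paper: the identity $\ch\,{\mathbb W}(\la) = (q)_\la^{-1}E_{w_0\la}(x,q,0)$ recalled in Section \ref{curalg} is itself a consequence of \cite{S,I}, so you cannot use it here. Fortunately it is also unnecessary, but note that the correct base cases are all length-zero elements: composing intertwiners along $u(\la)=s_{i_1}\cdots s_{i_\ell}\pi$ starts from $E_{\mathrm{wt}(\pi)}=x^{\mathrm{wt}(\pi)}$ (with $D_{\mathrm{wt}(\pi)}$ one-dimensional), not from $E_0=1$, unless $\pi=e$; these cases can be reduced to $\la=0$ by diagram automorphisms, as the paper does elsewhere.

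Second, and more seriously, the step you label ``the hard part'' is the entire content of the theorem, and your proposal only names it. One must prove that whenever $\ell(u(s_i\cdot\la'))=\ell(u(\la'))+1$ one has $E_{s_i\cdot\la'}(x,q,0)=D_i\,E_{\la'}(x,q,0)$ for all $i\in I^a$ (including regularity of the $t\to 0$ specialization of the intertwiners, i.e. absence of poles), that applying $D_i$ when the length does not increase, or when $s_iu(\la')\in u(\la')W$, fixes both sides (this is the dichotomy underlying Proposition \ref{ind} on the Demazure side), and that the $i=0$ case with $\al_0=\delta-\theta$ produces the correct $q$-shift on the Macdonald side; the $ADE$ hypothesis enters in ensuring the untwisted level-one Demazure modules are the ones governed by this recursion. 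None of this is carried out, so as it stands your text is a plan for reconstructing the cited result of Ion rather than a proof; within the logic of this paper the appropriate ``proof'' is simply the citation.
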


Let $\fB$ be the category of $U ( \I )$-modules $M$ such that $M$ is semi-simple with respect to the
$\wh$-action with each $\wh$-weight space is at most countable dimension, and its weights belong to $P^a$. In particular, every module $M$ in $\fB$ admits a $\bZ$-grading coming from the $\bZ \delta$-part of the weight lattice (corresponding to the eigenvalues of the grading operator $d \in \wh \subset \gh$). In particular, $\fB$ is a graded abelian category.

Let $\fB'$ be the fullsubcategory of $\fB$ so that each $\wh$-weight space is finite dimensional, and its weights belong to $\Lambda + \sum_{i \in I ^a}\bZ_{\ge 0} \al_i$ for some $\Lambda \in P^a$. Let $\fB_{0} \subset \fB$ be the fullsubcategory consisting of finite-dimensional modules in $\fB$. The both $\fB'$ and $\fB_0$ are graded abelian categories.

\begin{lem}
The category $\mathfrak B$ has enough projectives.
\end{lem}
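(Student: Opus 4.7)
The plan is to represent, for each affine weight $\lambda \in P^a$, the functor $M \mapsto M_\lambda$ by a projective object in $\fB$, and then to cover any $M \in \fB$ by a direct sum of copies of such objects.

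First I enlarge $\I$ to $\widetilde{\I} := \I \oplus \bC K \oplus \bC d$ by adjoining the canonical central element $K$ and the $\bZ$-grading operator $d$ acting semisimply on $\I$ by the $\delta$-eigenvalue of the affine weight. Every object of $\fB$ carries a natural $\widetilde{\I}$-module structure that refines its $\I$-structure: the $\wh$-semisimplicity hypothesis supplies the actions of $K$ and $d$, and morphisms in $\fB$ are tautologically $\widetilde{\I}$-linear. So $\fB$ may be identified with the category of $\widetilde{\I}$-modules satisfying the stated semisimplicity, countability, and integrality conditions.

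Second, for each $\lambda \in P^a$ I set
\[
P(\lambda) := U(\widetilde{\I}) \otimes_{U(\wh)} \bC_\lambda,
\]
where $\bC_\lambda$ denotes the one-dimensional $\wh$-module of weight $\lambda$. By PBW, $P(\lambda)$ is free as a module over $U(\fn^+ \oplus \g \T z\bC[z])$, hence $\wh$-semisimple with weights lying in $\lambda$ plus the $\bZ_{\ge 0}$-span of the $\wh$-weights of $\widetilde{\I}$, which is contained in $P^a$. Its weight spaces are at most countable-dimensional because $U(\I)$ is itself countable-dimensional over $\bC$. Thus $P(\lambda) \in \fB$, and Frobenius reciprocity yields a natural isomorphism
\[
\Hom_\fB ( P(\lambda), M ) \;\cong\; \Hom_\wh ( \bC_\lambda, M ) \;=\; M_\lambda
\]
for every $M \in \fB$. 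The right-hand side is exact in $M$ on $\wh$-semisimple modules, so $P(\lambda)$ is projective in $\fB$.

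Third, for an arbitrary $M \in \fB$ the canonical evaluation morphism
\[
\bigoplus_{\lambda \in P^a} P(\lambda) \otimes_\bC M_\lambda \;\longrightarrow\; M
\]
is surjective because its image contains every $\wh$-weight vector of $M$. The only point to verify is that the source itself belongs to $\fB$: the set $P^a$ is countable, each $M_\lambda$ is countable-dimensional by hypothesis, and each $P(\lambda)$ is countable-dimensional, so the source is a countable-dimensional vector space and hence each of its weight spaces is automatically at most countable-dimensional. The projectivity itself is a formal consequence of Frobenius reciprocity; the one mild point requiring attention is this last countability bookkeeping.
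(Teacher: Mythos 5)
Your proposal is correct and follows essentially the same route as the paper: inducing $\bC_\lambda$ from the Cartan to get $U(\I)\otimes_{U(\h)}\bC_\lambda$, using Frobenius reciprocity together with $\wh$-semisimplicity to see these are projective and map onto every weight vector, and summing over all weights of $M$ to get a projective cover. Your extra steps (packaging the $\wh$-action by adjoining $K$ and $d$, and the countability bookkeeping for the direct sum) are just explicit versions of what the paper leaves implicit.
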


\begin{proof}
By $\wh$-semisimplicity, the maximal cyclic $\I$-module in $\mathfrak B$ that surjects onto $\bC_{\Lambda}$ is $U ( \I ) \otimes_{U (\h)} \bC _{\Lambda}$. By the Frobenius reciprocity, this module maps to every module in $\mathfrak B$ that has non-zero weight $\Lambda$-part. Collecting them for all weights, we obtain a surjection from a projective module to an arbitrary module in $\mathfrak B$ as required.
\end{proof}

Let $\Lambda \in P^{a}$. We denote by $\bC_{\Lambda}$ the one-dimensional $\I$-module whose action factors through
$$( \I + \wh ) \to ( \I + \wh ) / [\I,\I] \cong \wh \stackrel{\Lambda}{\longrightarrow} \bC.$$
Since $[\I,\I]$ is a (pro-)nilpotent Lie algebra, it follows that $\{ \bC_{\Lambda} \}_{\Lambda \in P^{a}}$ is the complete collection of simple modules in $\fB$. Let $P_{\Lambda}$ be the projective cover of $\bC_{\Lambda}$ in $\fB$.

\begin{prop}[see e.g. Kumar \cite{Kum} Chapter I\!I\!I]
For each $\Lambda \in P^{a}$, we have
\begin{equation}\label{projectivecharacter}
{\rm ch} \, P_{\Lambda}=\prod_{\alpha \in \Delta_+^{a}}(1-e^{\alpha})^{-{\rm mult}\, \al} \cdot {\rm ch} \, \bC_{\Lambda}.
\end{equation}
\end{prop}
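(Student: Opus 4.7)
The plan is to identify $P_\Lambda$ explicitly with the induced module
$\widetilde P := U(\I)\otimes_{U(\fh)}\bC_\Lambda$, where $\bC_\Lambda$ is regarded as an $\fh$-module via $\Lambda|_\fh$, and then to extend the $\wh$-action on $\widetilde P$ by declaring that $K$ acts by the scalar $\bra \Lambda, K\ket$ (central) and that $d$ acts via the $\bZ$-grading on $U(\I)$ induced from the grading of $\I$, shifted so that the cyclic vector $1 \otimes 1$ carries the $d$-eigenvalue $\bra \Lambda, d\ket$. With these extensions $\widetilde P$ is an object of $\fB$, since PBW will force each $\wh$-weight space to be finite-dimensional.

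Next I would verify projectivity of $\widetilde P$ via Frobenius reciprocity: for every $M \in \fB$ the natural map
\[
\mathrm{Hom}_\fB(\widetilde P, M) \longrightarrow M_\Lambda, \qquad \varphi \mapsto \varphi(1\otimes 1),
\]
is an isomorphism onto the $\wh$-weight-$\Lambda$ subspace. Since every object of $\fB$ is $\wh$-semisimple, the functor $M \mapsto M_\Lambda$ is exact, so $\widetilde P$ is projective in $\fB$. To see that $\widetilde P$ is the projective cover of $\bC_\Lambda$, observe that $1\otimes 1$ spans the full $\Lambda$-weight space of $\widetilde P$, since any nontrivial PBW monomial in the generators of $\fn^+\oplus\fg\otimes z\bC[z]$ shifts the weight by a strictly positive sum of positive affine roots. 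Hence every proper submodule has trivial $\Lambda$-weight space, so the sum of all proper submodules is a unique maximal proper submodule $\mathrm{rad}(\widetilde P)$ and $\widetilde P / \mathrm{rad}(\widetilde P) \cong \bC_\Lambda$. By the uniqueness of projective covers, $\widetilde P \cong P_\Lambda$.

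For the character computation, I note that $\fn^+ \oplus \fg\otimes z\bC[z] \subset \I$ is a Lie subalgebra complementary to $\fh$ (brackets within $\fg \otimes z\bC[z]$ land in $\fg \otimes z^{\ge 2}\bC[z]$, avoiding the central term that would appear in $\widehat\fg$). PBW yields
\[
U(\I) \cong U\bigl(\fn^+\oplus\fg\otimes z\bC[z]\bigr) \otimes U(\fh)
\]
as a right $U(\fh)$-module, whence
\[
\widetilde P \cong U\bigl(\fn^+\oplus\fg\otimes z\bC[z]\bigr) \otimes \bC_\Lambda
\]
as a $\wh$-module. The $\wh$-weights of $\fn^+ \oplus \fg \otimes z\bC[z]$, counted with multiplicity, enumerate precisely the positive affine roots of $\widehat\fg$: each $\alpha \in \Delta_+$ contributes once from $\fn^+$, each $\alpha + k\delta$ with $\alpha \in \Delta$ and $k\ge 1$ contributes once from $\fg \otimes z^k$, and each $k\delta$ with $k\ge 1$ contributes $n$ times from the Cartan part of $\fg \otimes z^k$. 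The standard PBW character formula for the enveloping algebra of a $\wh$-graded pro-nilpotent Lie algebra then gives formula \eqref{projectivecharacter}.

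The only real subtlety, more bookkeeping than genuine obstacle, is to handle the fact that $\I$ itself contains neither $K$ nor $d$: these must be incorporated by hand in order to locate $\widetilde P$ inside $\fB$ and to make the cyclic vector carry the exact weight $\Lambda \in P^a$. Once this setup is in place, the argument reduces to PBW together with the general principle that inducing a one-dimensional weight module from a Levi-type subalgebra produces the projective cover of the corresponding simple module in the appropriate category of $\wh$-semisimple modules.
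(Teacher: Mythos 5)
Your argument is correct and follows essentially the same route as the paper: the paper also identifies $P_{\Lambda}$ with the module induced from $\bC_{\Lambda}$ over the Cartan (this is the content of the preceding lemma on enough projectives) and then reads off the character from the Poincar\'e--Birkhoff--Witt theorem applied to $U([\I,\I])$, which is exactly your complementary subalgebra $\fn^{+}\oplus\fg\otimes z\bC[z]$. Your write-up merely supplies the details (Frobenius reciprocity, exactness of the weight functor, one-dimensionality of the $\Lambda$-weight space giving the simple top) that the paper leaves implicit.
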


\begin{proof}
Projective modules in $\fB$ are isomorphic to $U ( \I ) \otimes_{U (\gh)} \bC_{\Lambda}$. Hence the Poincar\'e-Birkoff-Witt theorem applied to $U ( [\I, \I ] )$ gives its character.
\end{proof}

For each $i \in I^{a}$ and a $U ( \I )$-module $M$, we define $\mathcal D_{i} ( M )$ to be the maximal $\mathfrak{sl} ( 2, i )$-integrable quotient of $U ( \I _{i} ) \otimes_{U ( \I )} M$.

\begin{lem}
For each $i \in I^{a}$, the functor $\mathcal D_{i}$ preserves $\fB$.
\end{lem}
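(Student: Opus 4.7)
The plan is to reduce the verification to a PBW computation on the induced module $\widetilde M := U(\I_i) \otimes_{U(\I)} M$ and then observe that $\mathcal D_i(M)$, being a $U(\I)$-module quotient of $\widetilde M$ by construction, automatically inherits the defining properties of $\fB$ from $\widetilde M$.

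The first step is to check that $\I_i = \I \oplus \bC f_i$ is a Lie subalgebra, where $f_i$ denotes the negative root vector for the simple (affine) root $\al_i$; this reduces to verifying $[f_i, \I] \subset \I$, which follows from the indecomposability of $\al_i$ in the positive affine root semigroup (and, for $i = 0$, from $\theta$ being the highest finite root). With this in hand, ordering $f_i$ last in a PBW basis of $U(\I_i)$ yields a vector space isomorphism
\[
\widetilde M \cong \bC[f_i] \otimes_\bC M,
\]
under which $f_i^n \otimes m$ is a $\wh$-weight vector of weight $\wt(m) - n \al_i$ whenever $m$ is.

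The three defining properties of $\fB$ then follow immediately for $\widetilde M$ regarded as a $U(\I)$-module: the $\wh$-action is semisimple because each summand $f_i^n \otimes M$ is a weight-twist of $M$; all weights remain in $P^a$ since $\al_i \in P^a$; and the $\mu$-weight space is $\bigoplus_{n \ge 0} M_{\mu + n \al_i}$, a countable direct sum of at most countable-dimensional spaces. Since all three conditions are preserved under quotients, $\mathcal D_i(M) \in \fB$.

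The one technical point, which I do not expect to be a real obstacle, is confirming that the submodule of $\widetilde M$ defining $\mathcal D_i$ is $\wh$-graded, so that the quotient inherits the weight decomposition cleanly. This is straightforward: local $f_i$-nilpotency is a $\wh$-graded condition because $[h, f_i^n] = -n \al_i(h) f_i^n$ for each $h \in \wh$, and hence the maximal $\mathfrak{sl}(2,i)$-integrable quotient is formed by dividing out a $\wh$-stable $U(\I)$-submodule. The whole argument is therefore essentially formal, with the only real content being the PBW setup in the first step.
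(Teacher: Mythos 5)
Your argument is correct and is essentially the paper's own proof: the paper simply rewrites $U(\I_i)\otimes_{U(\I)}M$ as $U(\mathfrak{sl}(2,i))\otimes_{U(\I\cap\mathfrak{sl}(2,i))}M$, which is exactly your PBW identification $\bC[f_i]\otimes_{\bC}M$, observes that this lies in $\fB$, and passes to the quotient $\mathcal D_i(M)$. One small correction: the subalgebra check should be $[f_i,\I]\subset\I\oplus\bC f_i$ rather than $[f_i,\I]\subset\I$ (indeed $[f_i,\wh]\subset\bC f_i$), which is what simplicity of $\al_i$ actually gives and is all that is needed.
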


\begin{proof}
For $M \in \fB$, the $U (\I)$-module
$$U ( \mathfrak{sl} ( 2, i ) + \I ) \otimes _{U ( \I )} M \cong U ( \mathfrak{sl} ( 2, i ) ) \otimes _{U ( \I \cap \mathfrak{sl} ( 2, i ) )} M$$
sits in $\fB$. Therefore, its quotient $\mathcal D_i M$ also lie in $\fB$ as required.
\end{proof}

\begin{thm}[Joseph \cite{J}]\label{Jos}
The functors $\{ \mathcal D_{i} \}_{i \in I^{a}}$ satisfy:
\begin{itemize}
\item Each $\mathcal D_{i}$ is right exact;
\item We have a natural transformation $\mathrm{Id} \to \mathcal D_{i}$;
\item For two $i,j \in I$ so that $(s_{i}s_{j})^{m} = 1$, we have
$$\overbrace{\mathcal D_{i} \mathcal D_{j} \cdots}^{m} \cong \overbrace{\mathcal D_{j} \mathcal
D_{i} \cdots}^{m};$$
\item We have $\mathcal D_{i}^{2} \cong \mathcal D_{i}$.
\end{itemize}
\end{thm}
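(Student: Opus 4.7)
The plan is to establish the four assertions in the order listed, with (4) being a formal consequence of universal properties and (3) being the only substantive point.

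For (1), observe that $\mathcal D_i$ factors as the parabolic induction $U(\I_i)\otimes_{U(\I)}(-)$ followed by the functor $Q_i$ sending an $\I_i$-module to its maximal $\msl(2,i)$-integrable quotient. The first factor is exact by PBW, since $U(\I_i)$ is free as a right $U(\I)$-module on any $U(\I)$-linear complement. The functor $Q_i$ is visibly the left adjoint to the full embedding of $\msl(2,i)$-integrable $\I_i$-modules into all $\I_i$-modules, and therefore is right exact. The natural transformation in (2) is then supplied by the unit map $M\to U(\I_i)\otimes_{U(\I)} M$, $m\mapsto 1\otimes m$, postcomposed with the canonical projection onto $\mathcal D_i(M)$.

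For (4), suppose $N$ is already an $\I_i$-module that is $\msl(2,i)$-integrable. The multiplication map $U(\I_i)\otimes_{U(\I)} N\to N$ is a surjection onto an integrable module, so by the universal property of the maximal integrable quotient, $\mathcal D_i(N)$ surjects onto $N$. The unit $N\to \mathcal D_i(N)$ from (2) furnishes a section, and a diagram chase identifies the two composites with identity maps, yielding $\mathcal D_i(N)\cong N$. Applying this with $N=\mathcal D_i(M)$ for arbitrary $M\in\fB$ gives $\mathcal D_i^2(M)\cong\mathcal D_i(M)$.

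For (3), let $\g_{ij}\subset\gh$ denote the rank-two subalgebra generated by $\msl(2,i)$ and $\msl(2,j)$, and let $\I_{ij}:=\I+\g_{ij}\subset\gh$ be the corresponding parabolic subalgebra. The key claim is that both $m$-fold composites $\overbrace{\mathcal D_i\mathcal D_j\cdots}^{m}$ and $\overbrace{\mathcal D_j\mathcal D_i\cdots}^{m}$ agree with the single functor $\mathcal D_{ij}$ sending $M$ to the maximal $\g_{ij}$-integrable quotient of $U(\I_{ij})\otimes_{U(\I)} M$. Once this intrinsic description is established for both composites, the braid identity is automatic since $\mathcal D_{ij}$ is symmetric in $i,j$ by construction. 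To verify the identification, I would reduce to the rank-two setting, where in our $ADE$ context only $A_1\times A_1$ (with $m=2$) or $A_2$ (with $m=3$) arise, and check by direct computation that applying the Demazure functors along a reduced word of the longest element $w_0^{ij}\in W_{\{i,j\}}$ produces a $\g_{ij}$-integrable module satisfying the universal property of the left adjoint to the inclusion of $\g_{ij}$-integrable $\I_{ij}$-modules.

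The hard part is precisely this last verification, because a single application of $\mathcal D_j$ need not preserve $\msl(2,i)$-integrability; one must track how the kernels of the successive maximal integrable quotient functors interact with the remaining $\msl(2)$-actions. I would handle the universal case first, taking $M$ to be the induced $\I$-module $U(\I)\otimes_{U(\wh)}\bC_\Lambda$ for $\Lambda\in P^a$, exploit the resulting rank-two Demazure submodule structure inside a rank-two integrable highest weight module (which is explicitly known), and then extend to arbitrary $M\in\fB$ using the right exactness of (1) applied to a presentation by the projective covers $P_\Lambda$ whose characters are given by \eqref{projectivecharacter}.
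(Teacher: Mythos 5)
Your items (i) and (ii) are fine and coincide with the standard argument; note, though, that the paper does not reprove any of this: its proof is a citation of Joseph (\cite{J}, Lemma 2.2 for right exactness and the unit, Proposition 2.15 for the braid relation), together with the remarks that the same proofs work for Kac--Moody algebras and that idempotency holds because $\mathcal D_i$ does not change an $\mathfrak{sl}(2,i)$-integrable module. Measured against a complete proof, your write-up has two genuine gaps. In (iv), the ``diagram chase'' does not close: from the surjection $\pi\colon \mathcal D_i(N)\twoheadrightarrow N$ and the unit $\iota\colon N\to\mathcal D_i(N)$ you only get $\pi\circ\iota=\mathrm{id}_N$. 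The other composite sends the class of $u\otimes n$ to the class of $1\otimes un$, and the assertion that these classes agree in $\mathcal D_i(N)$ (equivalently, that the counit $\mathcal D_i(N)\to N$ is injective, equivalently that restriction from $\mathfrak{sl}(2,i)$-integrable $\I_{i}$-modules to $\I$-modules is fully faithful) is exactly the nontrivial content; moreover $\iota$ is only $\I$-linear, so it does not split $\pi$ in the category of $\I_{i}$-modules. One needs an actual argument here, e.g.\ the tensor identity $U(\I_{i})\otimes_{U(\I)}N\cong N\otimes\bigl(U(\I_{i})\otimes_{U(\I)}\bC\bigr)$ for locally $\mathfrak{sl}(2,i)$-finite $N$, or the explicit computation on string modules $N_{\Gamma,m}$ of the kind carried out in Proposition \ref{adj}.

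The more serious gap is (iii), which is the only substantive bullet. Your strategy --- identify both alternating composites with the maximal $\g_{ij}$-integrable quotient of $U(\I_{ij})\otimes_{U(\I)}(-)$, which is visibly symmetric in $i,j$ --- is indeed the standard route and matches the spirit of the paper's remark that the resulting functor lands in direct sums of finite-dimensional $\g_{ij}$-modules. But to execute it you must (a) equip the composite along a reduced word of the longest element of $W_{\{i,j\}}$ with a $\g_{ij}$-module structure (a priori it is only a module for the last parabolic $\I_{i}$ applied), (b) prove it is $\g_{ij}$-integrable, and (c) verify the universal property naturally in $M$; in addition, your reduction from the ``universal case'' to arbitrary $M\in\fB$ via right exactness presupposes a globally defined comparison natural transformation, which you have not constructed. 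You explicitly defer all of this (``the hard part is precisely this last verification''), so as written the braid relation remains unproved; either supply the rank-two computation and the naturality argument, or do as the paper does and quote \cite{J} with the observation that Joseph's proofs carry over verbatim to the Kac--Moody setting.
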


\begin{proof}
We warn that Joseph's original formulation is for semi-simple Lie algebra, but the identical proof works for
Kac-Moody algebras. The first two assertions are \cite[Lemma 2.2]{J}. The third assertion is \cite[Proposition
2.15]{J}. Note that the functorial isomorphism in the third assertion follows from the fact that the resulting
functor yields a direct sum of finite-dimensional representations of simple Lie algebra generated by
$e_{\al_{i}},e_{\al_{j}}, f_{\al_{i}}, f_{\al_{j}}$. The fourth assertion follows as $\mathcal D_{i}$ does not change a
module that is $\mathfrak{sl} ( 2, \al_{i} )$-integrable.
\end{proof}

Let $D^{-} ( \fB )$ be the derived category of $\fB$ bounded from the below.
The restricted dual $*$ induces an endo-functor on the fullsubcategory of $\fB$-modules whose weight spaces are
finite-dimensional. It perserves $\mathfrak B_{0}$. We set $\mathcal D_{i}^{\dag} := * \circ \mathcal D_{i} \circ *$ for each $i \in I^{a}$. Let $\mathbb L \mathcal D _{i}$ (resp. $\mathbb R \mathcal D_{i}^{\dag}$) be the left derived
functor of $\mathcal D_{i}$ (resp. the $*$-conjugation of $\mathbb L \mathcal D _{i}$). The functor $\mathbb R \mathcal D_{i}^{\dag}$ lands on $\fB_0$ thanks to the following:

\begin{lem}\label{real}
Let $i \in I^{a}$. For each $N \in \mathfrak B_{0}$ and $k \in \bZ$, we have
$$H^{k} ( \mathbb R \mathcal D_{i}^{\dag} ( N ) ) \cong H^{k} ( \bP^{1}, \mathcal O ( N ) ),$$
where $\mathcal O ( N )$ is the $\mathop{SL} ( 2, \bC )$-equivariant vector bundle on $\bP^{1}$ obtained from
$N$. In particular, the total cohomology of $\mathcal D_{i}^{\dag} N$ lies in $\mathfrak B_{0}$.
\end{lem}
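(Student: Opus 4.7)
The plan is to identify $\mathbb{R}\mathcal{D}_i^\dag$ with the right-derived functor of global sections of equivariant vector bundles on $\mathbb{P}^1 = \mathop{SL}(2,i)/B_i$. This is the standard geometric realization of Joseph's Demazure functor, and my approach is to establish it first in degree zero and then derive both sides coherently.

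First, I would make $\mathcal{O}(N)$ precise. For $N \in \fB_0$, the restriction of the $\I$-action to $\mathfrak{b}_i := \I \cap \mathfrak{sl}(2,i)$ integrates to an algebraic action of the Borel subgroup $B_i \subset \mathop{SL}(2,i)$, because $N$ is finite-dimensional, $\wh$-semisimple, and carries integral weights. The cohomology of the resulting $\mathop{SL}(2,i)$-equivariant bundle $\mathcal{O}(N) := \mathop{SL}(2,i) \times_{B_i} N$ inherits a natural $\I$-module structure by combining the $\mathop{SL}(2,i)$-action with the actions of the $\I$-components centralizing the $\mathfrak{sl}(2,i)$-triple up to the Cartan.

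Second, I would verify the degree-zero identification $\mathcal{D}_i^\dag(N) \cong H^0(\mathbb{P}^1, \mathcal{O}(N))$. By definition $\mathcal{D}_i^\dag(N) = (\mathcal{D}_i(N^*))^*$, and $\mathcal{D}_i(N^*)$ is the maximal $\mathfrak{sl}(2,i)$-integrable quotient of $U(\I_i) \otimes_{U(\I)} N^*$. By the PBW theorem, together with $\I_i = \I + \mathfrak{sl}(2,i)$ and $\I \cap \mathfrak{sl}(2,i) = \mathfrak{b}_i$, this coincides with $U(\mathfrak{sl}(2,i)) \otimes_{U(\mathfrak{b}_i)} N^*$ as a $\mathfrak{sl}(2,i)$-module. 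By Frobenius reciprocity and the classical Borel-Weil description for $\mathbb{P}^1$, the $*$-dual of this maximal integrable quotient is the induced module $\mathrm{Ind}_{B_i}^{\mathop{SL}(2,i)}(N) = H^0(\mathbb{P}^1, \mathcal{O}(N))$, giving the claimed isomorphism.

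Third, to upgrade to derived functors, I would use that $U(\I_i)$ is free over $U(\I)$ by PBW, so $U(\I_i) \otimes_{U(\I)} (-)$ is exact and $\mathbb{L}\mathcal{D}_i$ reduces to the left-derived functor of taking the maximal $\mathfrak{sl}(2,i)$-integrable quotient alone. This derived functor admits a length-one resolution matching the \v{C}ech cover of $\mathbb{P}^1$ by the two affine cells of the Bruhat decomposition, and its standard identification with $\mathrm{R}^{\bullet}\mathrm{Ind}_{B_i}^{\mathop{SL}(2,i)}(-)$, i.e. sheaf cohomology, finishes the proof. Since $\mathcal{O}(N)$ is a finite-rank bundle on the projective line, its cohomology is finite-dimensional and concentrated in degrees $0$ and $1$, which delivers the final assertion $\mathbb{R}\mathcal{D}_i^\dag(N) \in \fB_0$. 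The main obstacle is to track the $\I$-equivariance faithfully through all these identifications: the natural $\I$-action on $U(\I_i) \otimes_{U(\I)} N^*$ must correspond to the $\I$-action on $H^0(\mathbb{P}^1, \mathcal{O}(N))$ coming from the $B_i$-twist on each fibre combined with the action of the complement of $\mathfrak{sl}(2,i)$ in $\I$ by scalars on weight spaces. This requires careful bookkeeping, but follows the same pattern as in Kumar's treatment of Kac-Moody Demazure modules.
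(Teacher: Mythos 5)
Your degree-zero identification is essentially the paper's: the paper realizes $\bigl(U(\mathfrak{sl}(2,i))\otimes_{U(\I\cap\mathfrak{sl}(2,i))}N^*\bigr)^*$ as $H^0(\bA,\mathcal O(N))\cong\bC[\bA]\otimes N$ and takes maximal $\mathfrak{sl}(2,i)$-finite submodules on both sides, which is the same content as your Frobenius-reciprocity/coinduction argument. The genuine gap is in your third step, which is where the lemma actually lives. You assert that $\mathbb L\mathcal D_i$ ``admits a length-one resolution matching the \v{C}ech cover of $\bP^1$ by the two affine cells of the Bruhat decomposition'' and that a ``standard identification'' with $R^{\bullet}\mathrm{Ind}$ finishes the proof. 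First, the Bruhat decomposition of $\bP^1$ consists of a big cell and a point, not an open cover by two affine cells; the relevant \v{C}ech cover is by the big cell and its $s_i$-translate. More seriously, you give no argument that the derived functor $\mathbb L\mathcal D_i$ --- which in this paper is computed from the projectives $U(\I)\otimes_{U(\wh)}\bC_{\Lambda}$ of $\fB$ --- is computed by such a complex, nor that it agrees with sheaf cohomology of $\mathcal O(N)$; in this Iwahori-module setting that comparison is not an off-the-shelf citation, and it is precisely the content to be proved. Your preliminary reduction, that exactness of $U(\I_i)\otimes_{U(\I)}(-)$ lets you replace $\mathbb L\mathcal D_i$ by the derived maximal-integrable-quotient functor alone, also needs justification: you must check that the images of projectives of $\fB$ are acyclic for that quotient functor.

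The paper closes exactly this gap by a universal $\delta$-functor argument: both families $\{H^k(\mathbb R\mathcal D_i^{\dag}(\cdot))\}_k$ and $\{H^k(\bP^1,\mathcal O(\cdot))\}_k$ are effaceable $\delta$-functors on $\fB_0$ (effaceability being checked via finite-dimensional submodules of injective envelopes, following Grothendieck), and since they agree in degree zero by the computation above, they are isomorphic in all degrees. To keep your \v{C}ech-style route you would have to either establish $\mathcal D_i^{\dag}$-acyclicity of the \v{C}ech terms or run the same universality argument, at which point you have reproduced the paper's proof. Note finally that the ``in particular'' clause only needs finite-dimensionality, which does follow once the identification is in place, since $\mathcal O(N)$ is a finite-rank bundle on $\bP^1$ with cohomology concentrated in degrees $0$ and $1$.
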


\begin{proof}
We have a functorial isomorphism (with respect to $N \in \mathfrak B_{0}$)
\begin{equation}
\left( U ( \mathfrak{sl} ( 2, i ) ) \otimes_{U ( \I \cap \mathfrak{sl} ( 2, i ) )} N^* \right)^{*} \cong
H^{0} ( U, \mathcal O ( N ) ) \cong \bC [ \bA ] \otimes N,\label{Visom}
\end{equation}
where $\bA \subset \bP^{1}$ denotes the open dense $\bfI$-orbit of $\bP^{1} \cong \bfI _{i} / \bfI$. The
maximal $\mathfrak{sl} ( 2, i )$-finite submodule of the LHS of (\ref{Visom}) is $H^{0} ( \mathbb R \mathcal
D_{i}^{\dag} ( N ) )$, and the maximal $\mathfrak{sl} ( 2, i )$-finite submodule of the RHS of (\ref{Visom})
is $H^{0} ( \bP^{1}, \mathcal O ( N ) )$. By construction, the both of $\{H^{k} ( \mathbb R \mathcal
D_{i}^{\dag} ( \bullet ) )\}_{k}$ and $\{H^{k} ( \bP^{1}, \mathcal O ( \bullet ) )^*\}_{k}$ are the universal
$\delta$-functors (as it is straight-forward to check that some finite-dimensional submodule of the injective
envelope yields an effacable envelope, see \cite[\S 2.1--2.2]{G}). Being universal $\delta$-functors of two
isomorphic functors, they are necessarily isomorphic as desired.
\end{proof}

\begin{prop}\label{adj}
Let $i \in I^{a}$. For $M \in \mathfrak B$ and $N \in \mathfrak B_{0}$, we have
$$\mathrm{ext}_{\mathfrak B}^{k} ( \mathbb L \mathcal D_{i} ( M ), N ) \cong \mathrm{ext}_{\mathfrak B}^{k} (
M, \mathbb R \mathcal D_{i}^{\dag} ( N ) ) \hskip 5mm k \in \bZ,$$
where $\mathrm{ext}$ are understood as the hypercohomologies.
\end{prop}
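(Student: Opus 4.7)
The plan is to promote an underived adjunction $(\mathcal D_i, \mathcal D_i^\dag)$ to the stated derived adjunction between $\mathbb L \mathcal D_i$ and $\mathbb R \mathcal D_i^\dag$ by the standard two-sided resolution technique. First I would establish the underived version
\[
\mathrm{Hom}_{\fB}(\mathcal D_i M, N) \cong \mathrm{Hom}_{\fB}(M, \mathcal D_i^\dag N), \qquad M \in \fB,\ N \in \fB_0.
\]
By Lemma \ref{real}, $\mathcal D_i^\dag N$ may be identified with $H^0(\bP^1, \mathcal O(N))$, the global sections of the $\bfI_i$-equivariant vector bundle on $\bP^1 \cong \bfI_i/\bfI$ associated to $N$. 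In particular $\mathcal D_i^\dag N$ inherits a canonical finite-dimensional $\mathfrak{sl}(2,i)$-integrable $\I_i$-action, even though $N$ itself need not be $\mathfrak{sl}(2,i)$-integrable as an $\I$-module. Given $\psi \colon M \to \mathcal D_i^\dag N$ in $\fB$, Frobenius reciprocity extends it to an $\I_i$-linear map $U(\I_i) \otimes_{U(\I)} M \to \mathcal D_i^\dag N$, which automatically factors through the maximal integrable quotient $\mathcal D_i M$; composing with evaluation of sections at the $\bfI$-fixed point of $\bP^1$ yields the desired map $\mathcal D_i M \to N$. The inverse sends $\varphi \colon \mathcal D_i M \to N$ to the section $m \mapsto (g \mapsto \varphi(g \cdot m))$ of $\mathcal O(N)$.

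Next I would derive both sides by two-sided resolutions. Since $\fB$ has enough projectives, fix a projective resolution $P^\bullet \to M$ in $\fB$, so that $\mathbb L \mathcal D_i M \simeq \mathcal D_i P^\bullet$. To resolve the right-hand side, fix a projective resolution $Q^\bullet \to N^*$ in $\fB$ and set $J^\bullet := (Q^\bullet)^*$; since $*$ is an exact contravariant involution on the full subcategory of $\fB$ consisting of modules with finite-dimensional weight spaces, $J^\bullet$ is a resolution of $N$. Moreover each $J^b$ is $\mathcal D_i^\dag$-acyclic: projectivity of $Q^b$ forces $\mathbb L^{>0} \mathcal D_i Q^b = 0$, hence $\mathbb R^{>0} \mathcal D_i^\dag J^b = *(\mathbb L^{>0} \mathcal D_i Q^b) = 0$. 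Therefore $\mathbb R \mathcal D_i^\dag N \simeq \mathcal D_i^\dag J^\bullet = (\mathcal D_i Q^\bullet)^*$.

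Applying the underived adjunction termwise produces a functorial isomorphism of double complexes
\[
\mathrm{Hom}_{\fB}(\mathcal D_i P^a, J^b) \cong \mathrm{Hom}_{\fB}(P^a, \mathcal D_i^\dag J^b),
\]
whose associated total complexes are therefore quasi-isomorphic. The standard double complex spectral sequences identify the cohomology of the left-hand total complex with $\mathrm{ext}_{\fB}^k(\mathbb L \mathcal D_i M, N)$, and that of the right-hand total complex with $\mathrm{ext}_{\fB}^k(M, \mathbb R \mathcal D_i^\dag N)$, using projectivity of $P^\bullet$ on one side and the $\mathcal D_i^\dag$-acyclicity of $J^\bullet$ on the other. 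This yields the asserted isomorphism.

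The main obstacle is the verification of the underived adjunction in the first step: since the target $N$ is merely an $\I$-module, Frobenius reciprocity cannot be invoked naively until one endows $\mathcal D_i^\dag N$ with a canonical $\I_i$-module structure, and this is precisely what Lemma \ref{real} provides through the identification with $H^0(\bP^1, \mathcal O(N))$. Once this is in place, the derivation by two-sided resolution proceeds by formal considerations.
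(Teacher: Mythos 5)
Your underived adjunction $\mathrm{Hom}_{\fB}(\mathcal D_i M,N)\cong\mathrm{Hom}_{\fB}(M,\mathcal D_i^\dag N)$ for $N\in\fB_0$ is a reasonable starting point (it is close in spirit to what the paper's proof verifies at $k=0$, there only on Verma-type modules $U(\fb_0)\otimes_{U(\wh)}\bC_\Lambda$ and string modules, with the general case recovered by Frobenius--Nakayama reciprocity and universal $\delta$-functor arguments), although even there you lean on an integrated $\bfI_i$-action on $\mathcal D_i M$ and on regularity of the sections $g\mapsto\varphi(g\cdot m)$, which for an arbitrary $M\in\fB$ is not automatic and is exactly the kind of point Lemma \ref{real} only settles for finite-dimensional $N$.

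The genuine gap is in the derivation step. Your termwise isomorphism of double complexes applies the adjunction with second argument $J^b=(Q^b)^*$, the restricted dual of a projective; these modules are infinite-dimensional and lie far outside $\fB_0$, so step 1 (whose proof hinges on the identification $\mathcal D_i^\dag N\cong H^0(\bP^1,\mathcal O(N))$ of Lemma \ref{real}, stated and proved only for $N\in\fB_0$) simply does not cover them, and $\mathcal D_i^\dag$ itself is only defined through $*$ on modules with finite-dimensional weight spaces. Moreover, the condition you invoke for the left-hand total complex, namely $\mathcal D_i^\dag$-acyclicity of $J^\bullet$, is the wrong one: since $\mathcal D_i P^a$ is not projective, identifying that total complex with $\mathrm{ext}^\bullet_{\fB}(\mathbb L\mathcal D_i M,N)$ requires $J^\bullet$ to be an injective (or at least $\mathrm{hom}_{\fB}(\mathcal D_i P^a,-)$-acyclic) resolution of $N$ in $\fB$, and you never establish injectivity of duals of projectives there; nor do you justify that $Q^\bullet$ can be chosen with degreewise finite-dimensional weight spaces, which is needed both for $J^\bullet$ to remain in $\fB$ and for $*$ to be involutive in your acyclicity computation. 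The paper's proof is structured precisely to avoid these issues: it never needs injectives in $\fB$ or an extension of $\mathcal D_i^\dag$ beyond $\fB_0$, but instead reduces to the relative $(\fb_0,\wh)$ setting, shows both sides are universal $\delta$-functors (effaceable/coeffaceable degreewise), and checks $k=0$ by explicit computation. To salvage your route you would have to either extend the adjunction and Lemma \ref{real} to the class of modules $J^b$ (a pro-/ind-version of the geometric argument) and prove their injectivity in $\fB$, or fall back on a $\delta$-functor argument in the second variable, which is essentially the paper's proof.
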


\begin{proof}
We set $\al := \al_{i}$. Let us denote by $\fb _0 := \wh \oplus E_\al = ( \I \cap ( \wh + \mathfrak{sl} ( 2, i ) ) )$ and $\g_0 := \wh
+ \mathfrak{sl} ( 2, i )$. Let us denote by $V_0 (\Lambda)$ be the irreducible finite-dimensional $( \g_0 +
\wh )$-module with highest weight $\Lambda \in P^a$. For each $\Lambda \in P^{a}$, we have
$$\mathcal D_{i} ( U ( \I ) \otimes_{U (\wh)} \mathbb C_{\Lambda} ) \cong \begin{cases} \bigoplus_{n \ge 0} U
( \I ) \otimes_{U (\fb_0)} V_0 ( \Lambda + n \alpha ) & (\bra \al^{\vee}, \Lambda \ket \ge 0)\\\bigoplus_{n \ge
0} U ( \I ) \otimes_{U (\fb_0)} V_0 ( s_{i} \Lambda + n \alpha ) & (\bra \al^{\vee}, \Lambda \ket <
0)\end{cases}.$$
Let us consider a $\wh$-semisimple indecomposable $\fb_0$-module $N_{\Gamma, m}$ with lowest weight $\Gamma \in
P^{a}$ and highest weight $\Gamma + m \al$ (such a module is unique up to isomorphism, cf. \cite[\S 2.3]{J}). We
have
$$\mathcal D_{i} ^{\dag} ( N_{\Gamma, m} ) \cong \begin{cases}\bigoplus _{0 \le n \le \min \{m, -m - \bra \al^{\vee}, \Gamma
\ket\}} V_0 ( s_{i} \Gamma - n \al ) & (m \le - \bra \al^{\vee}, \Gamma \ket)\\ \{ 0 \} & (m > - \bra \al^{\vee}, \Gamma
\ket)\end{cases}.$$
By the Frobenius-Nakayama reciprocity, we have
\begin{equation}
\mathrm{ext}_{\fB}^{k} ( U ( \I ) \otimes_{U (\wh)} M, N ) \cong \mathrm{ext}_{(\fb_0, \wh)}^{k} ( U (\fb_0)
\otimes_{U(\wh)} M, N ),\label{FN-rec0}
\end{equation}
for a finitely generated $U ( \fb_0 )$-module $M$ with semi-simple $\wh$-action and $N \in \fB_{0}$, where the RHS denotes the relative extension (cf. Kumar \cite[Chapter I\!I\!I]{Kum}).

In view of \cite[\S 2.3]{J}, it suffices to compute the extension by replacing $U ( \I ) \otimes_{U (\wh)} M$ with $\mathbb L \mathcal D_{i} ( U ( \fb_0 ) \otimes_{U (\wh)} \mathbb C_{\Lambda} )$ and $N$ with a string $U ( \fb_0 )$-module to see the desired isomorphism for a projective module $M$ and a finite-dimensional module $N$.

In other words, our assertion reduces to the functorial isomorphism:
\begin{equation}
\mathrm{ext}_{(\fb_0, \wh)}^{k} ( \mathbb L \mathcal D_{i} ( U ( \fb_0 ) \otimes_{U ( \wh )} \bC_{\Lambda} ),
N_{\Gamma, m} ) \cong \mathrm{ext}_{(\fb, \fh)}^{k} ( U (\fb_0) \otimes_{U(\wh)} \bC_{\Lambda}, \mathbb R \mathcal D_{i} ^{\dag} ( N_{\Gamma, m} ) ),\label{FN-rec}
\end{equation}
for $\Lambda, \Gamma \in P^{a}$, $m \in \bZ_{\ge 0}$, and $k \in \bZ$, where $\mathcal D_{i}$ and $\mathcal D_{i}^{\dag}$ are replaced with analogous functors to $\mathcal D_{i}$ and $\mathcal D_{i}^{\dag}$ defined
for $\fb_0$-modules with semi-simple $\wh$-actions. Our functors in (\ref{FN-rec}) are universal
$\delta$-functors as $\mathbb L^\bullet\mathcal D_{i}$ is coeffaceable by taking projective cover, and
$\mathbb R^\bullet \mathcal D_{i}^{\dag}$ is effaceble by taking a finite-dimensional submodule inside its
injective envelope (cf. \cite[\S 2.1--2.2]{G}). Therefore, it suffices to prove (\ref{FN-rec}) for $k=0$.

Here the $k = 0$ case of the RHS of (\ref{FN-rec}) further reduces to
\begin{align*}
\mathrm{hom}_{(\fb_0, \wh)} \, & ( U (\fb_0) \otimes_{U(\wh)} \bC_{\Lambda}, \mathcal D_{i} ^{\dag} (
N_{\Gamma, m} ) ) \cong \mathrm{hom}_{\wh} ( \bC_{\Lambda}, \mathcal D_{i} ^{\dag} ( N_{\Gamma, m} ) )\\
& \cong \begin{cases}\bigoplus _{0 \le n \le n_0} \mathrm{hom}_{(\fb_0, \wh)} ( \bC_{\Lambda}, V_0 ( s_{i} \Gamma - n \al ) ) & (m \le - \bra \al^{\vee}, \Gamma \ket)\\ \{ 0 \} & (m > - \bra \al^{\vee}, \Gamma \ket)\end{cases}
\end{align*}
by the Frobenius reciprocity, where $n_0 := \min \{m, -m - \bra \al^{\vee}, \Gamma \ket\}$. The $k = 0$ case of
LHS of (\ref{FN-rec}) is rephrased as:
\begin{align*}
\mathrm{hom}_{(\fb_0, \wh)} \, & ( \mathcal D_{\al} ( U ( \fb_0 ) \otimes_{U ( \wh )} \bC_{\Lambda} ),
N_{\Gamma, m} ) \\
& \cong \begin{cases} \bigoplus_{n \ge 0} \mathrm{hom}_{(\fb_0, \wh)} ( V_0 ( \Lambda + n \alpha ), N_{\Gamma,
m} ) & (\bra \al^{\vee}, \Lambda \ket \ge 0)\\\bigoplus_{n \ge 0} \mathrm{hom}_{(\fb_0, \wh)} ( V_0 ( s_{i} \Lambda + n \alpha ), N_{\Gamma, m} ) & (\bra \al^{\vee}, \Lambda \ket < 0)\end{cases}
\end{align*}
From these, we derive the desired isomorphisms (\ref{FN-rec}). Moreover, these isomorphisms are functorial with respect to the morphism of modules as it commutes with the morphisms in each variable.

Therefore, we conclude the desired functorial isomorphism as required.
\end{proof}

Below in this section, every functor is derived unless stated otherwise.

\begin{lem}[\cite{Kat} \S 4, particularly Theorem 4.13]\label{DCF}
For each $i \in I$ and $\la \in P$, we have
$$\mathcal D_i ( \mathbb W _{\la} ) \cong \begin{cases}\mathbb W_{s_i \la} & (\bra \al^{\vee}_{i}, \la \ket >0)
\\ \mathbb W_{\la} & (\bra \al^{\vee}_{i}, \la \ket \le 0)  \end{cases}.$$
\end{lem}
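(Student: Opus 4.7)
The plan is to realize both sides geometrically via the identification $\mathbb W_{w\la_+} \cong H^0(\Q(w), \cO_w(\la_+))^*$ of \eqref{GammaO} and to reduce the computation of $\mathcal D_i$ to a $\bP^1$-fibration argument on the semi-infinite Schubert varieties. Write $\la = w \la_+$ with $\la_+ \in P_+$ and $w \in W$. The sign of $\bra \al_i^\vee, \la\ket = \bra w^{-1}\al_i^\vee, \la_+\ket$ is controlled by the Bruhat order: $\bra \al_i^\vee, \la\ket \le 0$ is equivalent to $w^{-1}\al_i \in \Delta_-$, i.e.\ $s_i w < w$ (so that ${\bf I}_i$ preserves $\Q(w)$), whereas $\bra \al_i^\vee, \la\ket > 0$ means $s_i w > w$ (so that ${\bf I}_i \cdot \Q(w) = \Q(s_i w) \supsetneq \Q(w)$).

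For the case $\bra \al_i^\vee, \la\ket \le 0$, the ${\bf I}_i$-equivariance of $\Q(w)$ endows $H^0(\Q(w), \cO_w(\la_+))$ with an ${\bf I}_i$-action extending the ${\bf I}$-action. Dualizing, $\mathbb W_\la$ is an ${\bf I}_i$-module, hence is $\mathfrak{sl}(2, i)$-integrable, and the natural surjection $U({\bf I}_i) \otimes_{U({\bf I})} \mathbb W_\la \twoheadrightarrow \mathbb W_\la$ factors through the maximal integrable quotient. This gives $\mathcal D_i \mathbb W_\la \cong \mathbb W_\la$ in this case.

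For the case $\bra \al_i^\vee, \la\ket > 0$, consider the $\bP^1$-bundle $\Q(s_i {\bf i}) = {\bf I}_i \times_{{\bf I}} \Q({\bf i}) \to \Q({\bf i})$ attached to the minimal parabolic. Applying Lemma \ref{real} fiberwise (or, equivalently, the Bott-Borel-Weil vanishing $H^{>0}(\bP^1, \cO(k)) = 0$ for $k \ge -1$ together with the sheaf-theoretic base change along $q_{{\bf i}}$ in the spirit of \eqref{qpushO} and the proof of Theorem \ref{g-real}), one obtains an isomorphism
\[H^0(\Q(s_i w), \cO_{s_i w}(\la_+))^* \cong \mathcal D_i H^0(\Q(w), \cO_w(\la_+))^*.\]
Combined with \eqref{GammaO}, this yields $\mathcal D_i \mathbb W_\la \cong \mathbb W_{s_i w\la_+} = \mathbb W_{s_i\la}$.

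The main technical obstacle is justifying this fiberwise base-change statement in the pro-finite-dimensional setting of the semi-infinite flag variety; this is carried out in the cited \cite[\S 4, especially Theorem 4.13]{Kat} using the Bott-Samelson-type resolution $\Q({\bf i}) \to \Q(w)$ and the cohomological analysis \eqref{qpushO}--\eqref{qpushE} recalled in Section \ref{SoSIS}. An alternative, more module-theoretic route compares the defining relations of $\mathbb W_\la$ and $\mathbb W_{s_i\la}$ directly: in Case $2$, the vector $w' := f_{-\al_i}^{N} w_\la$ with $N = \bra \al_i^\vee, \la\ket$ has $\fh$-weight $s_i\la$ in $\mathcal D_i \mathbb W_\la$, and one verifies each of the three families of relations of $\mathbb W_{s_i\la} = \mathbb W_{(\sigma s_\al)(\la_-)}$ on $w'$ by commuting the relevant elements of $\I$ past $f_{-\al_i}^N$ via rank-two $\mathfrak{sl}(2, i)$-identities, after which a character comparison (using $\ch \mathcal D_i M = \mathsf D_i \ch M$) closes the argument.
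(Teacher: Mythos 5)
The paper itself gives no argument for Lemma \ref{DCF}: it is imported directly from \cite[\S 4, Theorem 4.13]{Kat}, and your sketch reproduces exactly the geometric mechanism of that source --- realize $\mathbb W_{w\la_+}$ as $H^0(\Q(w),\cO_w(\la_+))^*$ via \eqref{GammaO}, handle the integrable case through ${\bf I}_i$-stability of $\Q(w)$, and in the remaining case push down along ${\bf I}_i\times_{\bf I}\Q(w)\to \Q(s_iw)$ --- while deferring the essential base-change/vanishing step to the very same citation; so your approach coincides with the paper's.

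Two cautions. First, the asserted equivalence ``$\bra \al_i^\vee,\la\ket\le 0$ iff $w^{-1}\al_i\in\Delta_-$'' fails when $\bra \al_i^\vee,\la\ket=0$: then $s_i$ may stabilize $\la=w\la_+$ while $s_iw>w$, so ${\bf I}_i$ need not preserve $\Q(w)$ for your chosen $w$; the fix is to replace $w$ by the longer representative $s_iw$ of the same weight, using that $\mathbb W_{w\la_+}$ (being defined by relations depending only on $w\la_+$) is unchanged. Second, since Section \ref{cat} declares all functors derived, the lemma implicitly includes the vanishing of $\mathbb L_{>0}\mathcal D_i(\mathbb W_\la)$; your geometric route does deliver this through Kato's cohomology vanishing, but the ``alternative module-theoretic route'' does not, and is in any case circular as sketched, because the character identity $\ch \, \mathcal D_i M = \mathsf{D}_i \, \ch \, M$ used to close it already presupposes exactly that higher vanishing.
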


\begin{prop}\label{ind}
For each weight $\lambda$ and $i \in I^{a}$, we have
\begin{align}
H_{k} ( \mathcal D_{i} ( D_{\la} ) ) & \cong \begin{cases}D_{s_{i} \la} & (k = 0, u({\la}) < s_{i} u({\la})
\not\in u(\la)W)\\ D_{\la} & (k = 0, u_{\la} > s_{i} u(\la) \text{ or } s_{i} u(\la) \in u(\la) W)\\
\{0\} & (k \neq 0)\end{cases}
\end{align}
Assume that $\g$ is not of type $E_{8},F_{4},$ or $G_2$. If we have $s_{i} u(\la) < u(\la)$, then we have a short exact sequence
$$0 \to \mathbb U_{-\la} \to \mathcal D_{i} ( \mathbb U_{-\la} ) \to \mathbb U_{- s_{i} \la} \to 0.$$
If we have $s_{i} u(\la) \in u(\la) W$, then we have $\mathbb U_{-\la} \cong \mathcal D_{i} ( \mathbb U _{- \la} )$.
If we have $u(\la) < s_{i} u(\la) \not\in u(\la) W$, then we have $\mathcal D_{i} ( \mathbb U _{- \la} ) \cong \{ 0 \}$. In each case, we have $H_k ( \mathcal D_{i} ( \mathbb U_{-\la} ) ) \cong \{ 0 \}$ for $k \neq 0$.
\end{prop}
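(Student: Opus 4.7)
The strategy is to handle the two assertions of Proposition~\ref{ind} separately: the Demazure module case is essentially a classical consequence of Joseph's theory, while the global $\mathbb U_{-\la}$ case requires the geometric realization of Section~\ref{SoSIS}.

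For $D_\la$, the plan is to invoke the standard identification $\mathcal D_i(D_\la) \cong D_{s_i \ast u(\la)\pi^{-1}}$, where $\ast$ denotes the Demazure product in $W^a$; this is a classical property of Joseph's functor applied to Demazure submodules of integrable highest weight $\widehat{\g}$-modules. The three cases stated follow directly: if $s_i u(\la) > u(\la)$ and $s_i u(\la) \notin u(\la) W$ then $s_i \ast u(\la) = s_i u(\la)$ produces $D_{s_i \la}$; otherwise $D_\la$ is already $\mathfrak{sl}(2,\al_i)$-integrable and $\mathcal D_i$ preserves it. The higher vanishing $H_k(\mathcal D_i(D_\la)) = 0$ for $k > 0$ follows from the existence of Polo--Joseph type resolutions by modules of the form $U(\I_i) \otimes_{U(\I)} D_{\la'}$ on which $\mathcal D_i$ acts exactly.

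For $\mathbb U_{-\la}$, I would exploit Theorem~\ref{g-real}, which identifies $\mathbb U_{-\la}^*$ with $H^0(\Q(w), \E_w(\nu))$ for the appropriate pair $(w,\nu)$. Applying Proposition~\ref{adj} in combination with Lemma~\ref{real} then converts $\mathbb L\mathcal D_i$ on the algebraic side into the $\bP^1$-pushforward $H^\bullet(\bP^1, -)$ on the geometric side, via the natural $\mathbf I_i$-equivariant structure. Using the construction of $\E_w(\nu)$ as the pushforward of $\cO(\nu - \sum_k H_k)$ along the Bott--Samelson resolution $q_{\mathbf i}: \Q(\mathbf i) \to \Q(w)$, the computation localizes to the $\bP^1$-fibers of the natural projection $\mathbf I_i \times_{\mathbf I} \Q(w) \to \Q(s_i \ast w)$. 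The three cases of the proposition then correspond to three sign regimes of the relevant line bundle degree: strictly negative on each fiber (so that the $-\sum_k H_k$ twists kill everything, yielding $\mathcal D_i \mathbb U_{-\la} = 0$); degenerate (so that $\mathbb U_{-\la}$ is already suitably integrable); and the length-decreasing case, which produces the claimed short exact sequence through the natural inclusion of $\E_w(\nu)$ into the analogous sheaf on $\Q(s_i w)$ followed by pushforward.

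The main obstacle will be the construction of the short exact sequence in the case $s_i u(\la) < u(\la)$, together with the vanishing of higher derived functors $H_k$ for $k > 0$. This requires careful bookkeeping of the $H_k$-divisor contributions in the definition of $\E_w(\nu)$ through the $\bP^1$-pushforward, likely via a rank-two reduction analogous to Lemma~\ref{ranktworeduction}. The exclusion of $E_8$ (and of $F_4$, $G_2$, already outside the $ADE$ hypothesis) presumably reflects the fact that in these types the rank-two local computation involves the $G_2$-type Coxeter relation in a way that is not readily compatible with the present argument. The vanishing of the higher $H_k$ in each remaining case then follows from $H^1(\bP^1,\cO(m)) = 0$ for $m \ge -1$, applied to the relevant twisted line bundles on the fibers.
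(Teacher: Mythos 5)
Your treatment of the Demazure half and of the short exact sequence is essentially the paper's: the first assertion is disposed of by the standard Demazure-functor calculus for Demazure submodules of $L(\Lambda)$ (the paper simply cites Kumar, Theorems 8.2.2 and 8.2.9, where you invoke Joseph and the Demazure product --- same content), and the sequence $0 \to \mathbb U_{-\la} \to \mathcal D_i(\mathbb U_{-\la}) \to \mathbb U_{-s_i\la} \to 0$ for $s_i u(\la) < u(\la)$, with vanishing higher cohomology, is indeed extracted from Kato's construction of $\E_w(\la)$ together with Lemma \ref{real}, exactly as you propose (no rank-two reduction in the style of Lemma \ref{ranktworeduction} is needed for this). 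However, the other two cases are where your plan has gaps. For the vanishing case $u(\la) < s_i u(\la) \notin u(\la)W$ you appeal to a fiberwise negativity of line-bundle degrees; this is not carried out and is unnecessary: once the short exact sequence is in hand, the paper applies $\mathcal D_i$ to it and uses $\mathcal D_i^2 \cong \mathcal D_i$ (Theorem \ref{Jos}) plus the long exact sequence to force $\mathcal D_i(\mathbb U_{-s_i\la}) \cong \{0\}$ --- a purely formal deduction you could substitute for your unproved sign analysis. For the case $s_i u(\la) \in u(\la)W$, ``degenerate, so $\mathbb U_{-\la}$ is already suitably integrable'' is an assertion, not an argument: $\mathfrak{sl}(2,i)$-integrability of $\mathbb U_{-\la}$ is exactly what has to be established, and the paper does this by combining the presentation $\mathbb U_{-w\la_-} = \mathbb W_{-w\la_-}/\sum_{u<w}\mathbb W_{-u\la_-}$ (from Theorem \ref{g-real} and Lemma \ref{max-repr}) with the behaviour of $\mathcal D_i$ on global Weyl modules (Lemma \ref{DCF}) and a careful analysis of the stabilizer of $\la_-$; nothing in your sketch replaces this.

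The most serious gap is the case $i = 0$, which your geometric localization does not reach: the varieties $\Q(w)$ and the fibrations by $\mathbf I_i$ that you use are indexed by finite $i \in I$, so your argument as written only covers $i \in I$, while the proposition is stated for $i \in I^a$. The paper handles $i = 0$ by applying a diagram automorphism $\tau$ of the affine Dynkin diagram carrying $\al_0$ to some $\al_i$ with $i \in I$ and transporting $\mathbb U_{-\la}$ along it; such an automorphism exists precisely for types $A$--$D$, $E_6$, $E_7$, and fails for $E_8$ (and $F_4$, $G_2$). This, and not any $G_2$-type rank-two Coxeter relation, is the reason for the type exclusion in the statement; your proposed explanation of the restriction is incorrect, and without the $i=0$ reduction your proof is incomplete even in the allowed types.
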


\begin{proof}
The first assertion is a rephrasement of \cite[Theorem 8.2.2 and Theorem 8.2.9]{Kum} applied to $\pi
\Lambda_{0} \in P^{a}$ and $u(\la) \pi^{-1} \in W^{a}$.

We prove the second assertion. In view of the construction of \cite[\S 6]{Kat} (see also \S \ref{SoSIS}) and
Lemma \ref{real}, we deduce a short exact sequence
\begin{equation}
0 \to \mathbb U_{- \la} \to \mathcal D_{i} ( \mathbb U_{- \la} ) \to \mathbb U_{- s_{i} \la} \to
0\label{U-ses}
\end{equation}
when $s_i u(\la) < u(\la)$. Moreover, the corresponding higher cohomologies must vanish. By Theorem \ref{Jos}
4), it holds that applying $\mathcal D_{i}$ to (\ref{U-ses}) yields an isomorphism $\mathcal D_{i} ( \mathbb
U_{- \la} ) \to \mathcal D_{i}^{2} ( \mathbb U_{- \la} )$. Hence, we have $\mathcal D_{i} ( \mathbb U_{-
s_{i}\la} ) \cong \{ 0 \}$ by the associated long exact sequence.

We consider the case $s_i u(\la) \in u(\la) W$. We have $\la = w \la_-$ for $w \in W$. In view of the last
formula in the proof of Theorem \ref{g-real} and Lemma \ref{max-repr}, we know that $\mathbb U_{- w \la_-}$ is a
quotient of $\mathbb W _{- w \la_-}$ by $\mathbb W_{-u\la_-}$ with all $u < w \in W$. Here, we have $s_\al \la =
\la$, which implies that $\mathbb W _{- w \la_-} = \mathbb W _{- s_i w \la_-}$ and $s_i w\cdot \mathrm{stab}_W
\la_- = w\cdot \mathrm{stab}_W \la_-$. Note that $w$ can be thought of as a minimal length element in $w\cdot
\mathrm{stab}_W \la_- \subset W$. It follows that $s_i u \not\in w\cdot \mathrm{stab}_W \la_-$ and $s_i u < s_i
w$. It implies
$$\mathbb W _{- s_i u \la_-} \subsetneq \mathbb W _{- s_i w \la_-} = \mathbb W _{- w \la_-}.$$
Therefore, we have necessarily $\mathbb W _{- v \la_-} = \mathbb W _{- s_i u \la_-}$ with $v < w$ in view of
\cite[Theorem 4.12 (2)]{Kat}. This implies $\mathcal D_i ( \mathbb U _{-\la} ) \cong \mathbb U_{-\la}$ by
Lemma \ref{DCF} and the last formula in the proof of Theorem \ref{g-real}.

In case $i = 0$, then we apply a diagram automorphism $\tau$ of the affine Dynkin diagrams of type
$ABCDE_{6}E_{7}$ to $\alpha_{0}$ and $\mathbb U_{- \la}$. Then, $\tau \alpha_{0} = \al_{i}$
for some $i \in I$, and $\tau (\la + \Lambda_{0}) = \la' + \Lambda_{0}$ for some $\la' \in P$ so that
$\left< \al^{\vee}_{i}, \la' \right> < 0$. In addition, we have $\tau ( \mathbb U_{- \la} ) \cong \mathbb U_{-
\la'}$ by the description of the defining equations. Therefore, we deduce the assertion also in this case.
\end{proof}

\begin{lem}\label{W-res}
Suppose that $\la$ is anti-dominant. The module $\mathbb W_{\la}$ admits a resolution by $\{ P_{\gamma} \left< m \right> \}_{\gamma, m \in \bZ}$, where $w \gamma < \la$ for some $w \in W$ or $\gamma = \la$ $($up to $\bZ \Lambda_0$-character twists$)$.
\end{lem}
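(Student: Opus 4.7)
The plan is to construct the resolution inductively, starting from the natural surjection $P_{\la} \twoheadrightarrow \W_{\la}$ obtained by sending the cyclic generator of $P_\la$ to $w_{\la}$. This first step uses the weight $\gamma = \la$, which is good by definition.

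For the first syzygy, I would identify the kernel generators as lifts of the defining relations of $\W_{\la}$ recalled in Section \ref{Umod}. For anti-dominant $\la$ (corresponding to $\sigma = e$), these relations are $(f_{\al}\otimes z^{k}) w_{\la}=0$ for $\al \in \Delta_{+}$ and $k \ge 1$, and $(e_{\al})^{-\bra\la,\al^\vee\ket+1} w_{\la}=0$ for $\al \in \Delta_{+}$. The first type lifts to a weight vector in $P_\la$ of finite weight $\la - \al$ (with grading shift $k$): the identity $w = e$ directly witnesses $\la - \al < \la$. The second type lifts to a weight vector of weight $\la + N \al$ with $N = -\bra\la,\al^\vee\ket+1$: applying $s_\al$ yields $s_{\al}(\la + N\al) = \la - \al < \la$, since $N + \bra\la,\al^\vee\ket = 1$. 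Hence the first kernel admits a cover by good projectives.

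For the higher syzygies, I would argue by induction, propagating the good-weight condition. The key closure property to be established is: any finitely generated module $M \in \fB$ generated by weight vectors at good weights admits a cover by good projectives whose kernel is again generated at good weights. I would establish this via a rank-one reduction using Joseph's functors $\mathcal{D}_{i}$ from Theorem \ref{Jos}, combined with the rank-two analysis carried out in Lemma \ref{ranktworeduction}: each new syzygy at a given $\fh$-weight can be analyzed inside an $\mathfrak{sl}(2,\al_{i})$-string, where the relations take the same rank-one form encountered at the first step, and the appropriate Weyl reflection preserves the good-weight condition just as in the base case.

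The main obstacle is the closure property itself. A priori, $P_{\gamma}$ has weight support in all of $\gamma + Q_{+}^{a}$, so arbitrary positive-affine-root shifts could appear in syzygies and exit the good set. The resolution must therefore be chosen minimally, taking only genuinely new generators at each stage, so that new generators arise from the rank-one/rank-two defining relations and thereby inherit the good-weight property from the rank-two analysis. Organizing the induction by a height function on $Q_{+}^{a}$ (or by affine Weyl length) ensures termination within each graded piece and gives the desired resolution.
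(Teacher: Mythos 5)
Your first step is sound: the kernel of the projective cover $P_{\la} \twoheadrightarrow \W_{\la}$ is by definition generated by the lifted defining relations, and your weight check (applying $s_{\al}$ to $\la + (-\bra\la,\al^\vee\ket+1)\al$ to land at $\la-\al$) correctly places those generators at good weights. The gap is the inductive step. The ``closure property'' you lean on --- that any module generated at good weights admits a projective cover whose kernel is again generated at good weights --- is not only unproved but false as a general statement in $\fB$. For example, $\bC_{\la}$ is generated at the good weight $\la$, yet the kernel of $P_{\la} \to \bC_{\la}$ must have a minimal generator of weight $\la + \al_{i}$ (the vector $e_{\al_{i}} \otimes v_{\la}$ cannot be reached from kernel elements of other weights in homogeneous degree zero), and for deeply anti-dominant $\la$, say $\la = -N\rho$ with $N$ large, one checks that $w(\la+\al_{i}) - \la \in Q_{+}\setminus\{0\}$ for every $w \in W$, so $\la+\al_{i}$ is not good. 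Thus goodness of the weights of generators is not inherited by syzygies for formal reasons; controlling \emph{all} higher syzygies of $\W_{\la}$ is precisely the content of the lemma. Your proposed repair (minimality of the resolution, Joseph's functors $\mathcal D_{i}$, and the rank-two analysis of Lemma \ref{ranktworeduction}) supplies no mechanism for this: Lemma \ref{ranktworeduction} concerns the decomposition of generalized Weyl modules with characteristic, not kernels of projective covers, and Theorem \ref{Jos} gives no information about syzygies; there is no reason that minimal higher syzygies are governed by rank-one or rank-two relations.

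The paper closes exactly this gap by importing global homological input rather than arguing syzygy by syzygy. By Chari--Ion's BGG reciprocity \cite{CI} (in the setting of \cite{CG}), $\W_{\la}$ admits a projective resolution in the category of $\g$-integrable $\g[z]$-modules whose terms are sums of $U(\g[z])\otimes_{U(\g)}V(w_{0}\gamma)$ with $\gamma \le \la$ --- this is where the weight bound in \emph{every} homological degree comes from. Each $V(w_{0}\gamma)$ is then resolved over $\g$ by the finite BGG resolution with terms $U(\fb)\otimes_{U(\fh)}\bC_{w(\gamma-\rho)+\rho}$, and the total complex of the resulting double complex is a resolution of $\W_{\la}$ by the projectives $P_{w(\gamma-\rho)+\rho}\left<m\right>$; the inequality $w^{-1}(w(\gamma-\rho)+\rho) \le \gamma \le \la$, with equality only for $w=e$, gives the good-weight condition. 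To make your approach work you would need an input of comparable strength (or an independent proof of such a weight bound on all terms of a resolution); as written, the induction does not close.
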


\begin{proof}
By a result of Chari-Ion \cite{CI}, we deduce that $\mathbb W_{\la}$ admits a resolution by $U ( \g [z] )
\otimes _{U ( \g )} V ( w_0 \gamma )$ (that is a projective module in the category of $\g$-integrable $\g [z]$-modules, see \cite{CG}), where $\gamma \le \la$. Since $V ( w_0 \gamma )$ admits a finite
resolution by $\{U ( \fb ) \otimes _{U ( \h )} \mathbb C _{w ( \gamma - \rho ) + \rho} \}_{w \in W}$ (afforded by the BGG resolution) as $U ( \g )$-modules, we deduce that the total complex of the double complex resolving each $U ( \g [z] ) \otimes _{U ( \g )}
V ( w_0 \gamma )$ (direct summand of a term in the projective resolution in the category of $\g$-integrable $\g [z]$-modules) has $\{ P _{w ( \gamma - \rho ) + \rho} \}_{\gamma, w \in W}$ ($\gamma$ is as above) as its direct factors. Since we have
$$w^{-1} ( w ( \gamma - \rho ) + \rho ) \le \gamma,$$
and the equality holds if and only if $w = e$, we conclude the assertion.
\end{proof}

\begin{lem}\label{D-res}
Assume that $\g$ is of type $ADE$. Suppose that $\mu \in P_-$. The module $D_{\mu}$ admits a resolution by
$\{ P_{\gamma} \left< m \right> \}_{\gamma, m \in \bZ}$, where $\gamma \in P$ satisfies $w \gamma < \mu$
for some $w \in W$ or $\gamma = \mu$ $($up to $\bZ \Lambda_0$-character twists$)$.
\end{lem}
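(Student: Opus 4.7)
The plan is to reduce Lemma \ref{D-res} to the already-proved Lemma \ref{W-res} by means of a Koszul resolution coming from the free action of the polynomial algebra $A(w_0\mu)$ on $\mathbb W_\mu$. The key observation is that $D_\mu$ is obtained from $\mathbb W_\mu$ by quotienting out a polynomial algebra worth of ``higher modes'', so it has a finite resolution by grading shifts of $\mathbb W_\mu$ itself.

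First I would invoke Remark \ref{WDU-rel}(1), which identifies $D_\mu \cong W_\mu$ as $\I$-modules for $\mu \in P_-$; it therefore suffices to resolve $W_\mu$. Next, Theorem \ref{FMO-free} asserts that the polynomial algebra $A := A(w_0\mu)$ acts freely on $\mathbb W_\mu$ through $\I$-module endomorphisms, with $\mathbb W_\mu / \mathfrak m \mathbb W_\mu \cong W_\mu$ for the augmentation ideal $\mathfrak m \subset A$. Let $V$ denote the finite-dimensional graded vector space spanned by a set of polynomial generators of $A$ (coming from elementary symmetric polynomials in the $x_{i,a}$, so concentrated in degrees $1, 2, \ldots, r_i$).

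Since $A$ is a polynomial algebra on the generators $V$, the trivial module $\bC = A/\mathfrak m$ admits the standard finite Koszul resolution
\[
0 \to A \otimes \Lambda^{\dim V} V \to \cdots \to A \otimes V \to A \to \bC \to 0
\]
by free $A$-modules. Tensoring with $\mathbb W_\mu$ over $A$ (which preserves exactness by freeness) yields a finite resolution of $W_\mu = D_\mu$ whose terms $\mathbb W_\mu \otimes \Lambda^k V$ are finite direct sums of grading shifts $\mathbb W_\mu \langle m \rangle$ as $\I$-modules, because the $A$-action commutes with the $\I$-action and tensoring with $\Lambda^k V$ only shifts the $\bZ\delta$-grading without changing the $\fh$-weights. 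Now I would apply Lemma \ref{W-res} to each $\mathbb W_\mu \langle m \rangle$ to obtain a projective resolution by $P_\gamma \langle m' \rangle$'s with $\gamma = \mu$ or $w\gamma < \mu$ for some $w \in W$, and assemble these via the total complex of the resulting double complex.

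I expect no real obstacle: the weight condition is preserved automatically since Koszul tensoring only shifts the grading, and the freeness of $A$ on $\mathbb W_\mu$ guarantees exactness at every stage. The only point needing mild care is confirming that the Koszul complex is a complex of objects of $\fB$, which follows at once from the fact that $V$ lies in finitely many strictly positive degrees and $A$ acts by $\I$-module endomorphisms. Everything else is a routine bookkeeping of grading shifts and an invocation of Lemma \ref{W-res}.
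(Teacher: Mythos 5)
Your proposal is correct and follows essentially the same route as the paper: the paper likewise resolves $W_{\mu}$ by a finite complex of (shifts of) $\mathbb W_{\mu}$ coming from the free polynomial-algebra action, invokes Lemma \ref{W-res} for each term, and concludes via the identification $D_{\mu} \cong W_{\mu}$ of Remark \ref{WDU-rel}. You merely make the Koszul complex and the total-complex assembly explicit, which the paper leaves implicit.
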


\begin{proof}
The module $W_{\mu}$ admits a finite resolution by a complex whose terms are the direct sum of $\mathbb W_{\mu}$ (since $\mathbb W_{\mu}$
admits an action of a polynomial ring and its specialization to a point is $W_\mu$ by
\cite{FL, N}). Hence, Lemma \ref{W-res} implies that $W_{\mu}$ admits a $U ( \I )$-module resolution of the
desired type. Therefore, the identification $D_{\mu} \cong W _{\mu}$ (see Remark \ref{WDU-rel}) implies the
result.
\end{proof}

\begin{thm}\label{orth}
Assume that $\g$ is of type $ADE_{6}E_{7}$. We have:
\[
\mathrm{ext}^{i}_{\mathfrak B} ( \mathbb U_{- \la}, D_{\mu}^{*} ) \cong \begin{cases} \bC & (i=0,\la=\mu)\\\{0\}
& (\text{otherwise}).\end{cases}
\]
\end{thm}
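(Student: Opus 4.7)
The plan is to proceed by induction on the affine length $\ell ( u ( \la ) )$, using the Demazure functors $\mathcal D_{i}$ ($i \in I^{a}$) together with the adjunction of Proposition \ref{adj} to reduce to a base case where $\la, \mu \in P_{-}$, which is then settled by the projective resolution of Lemma \ref{D-res} combined with a character identity. The basic mechanism is that $\mathbb L \mathcal D_{i} ( \mathbb U_{- \la} )$ is concentrated in degree zero by Proposition \ref{ind}, whereas $\mathcal D_{i} D_{\mu} \in \{ D_{\mu}, D_{s_{i}\mu} \}$ on the other side.

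\emph{Inductive step.} Suppose $\ell ( u ( \la ) ) > 0$ and pick $i \in I^{a}$ with $s_{i} u ( \la ) < u ( \la )$. Proposition \ref{ind} then furnishes the short exact sequence
\[
0 \to \mathbb U_{- \la} \to \mathcal D_{i} ( \mathbb U_{- \la} ) \to \mathbb U_{- s_{i} \la} \to 0
\]
(or the stability isomorphism $\mathbb U_{- \la} \cong \mathcal D_{i} ( \mathbb U_{- \la} )$ if $s_{i} u ( \la ) \in u ( \la ) W$) and guarantees that $\mathbb L \mathcal D_{i} ( \mathbb U_{- \la} )$ is concentrated in homological degree zero. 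Applying $\mathrm{ext}^{\bullet}_{\mathfrak B} ( -, D_{\mu}^{*} )$ and rewriting the middle term via Proposition \ref{adj} as
\[
\mathrm{ext}^{k}_{\mathfrak B} ( \mathcal D_{i} ( \mathbb U_{- \la} ), D_{\mu}^{*} ) \cong \mathrm{ext}^{k}_{\mathfrak B} ( \mathbb U_{- \la}, ( \mathcal D_{i} D_{\mu} )^{*} ),
\]
the resulting long exact sequence either (a) forces $\mathrm{ext}^{\bullet}_{\mathfrak B} ( \mathbb U_{- s_{i} \la}, D_{\mu}^{*} ) = 0$ outright when $\mathcal D_{i} D_{\mu} = D_{\mu}$ (since the restriction along $\mathbb U_{- \la} \hookrightarrow \mathcal D_{i} ( \mathbb U_{- \la} )$ coming from the unit of adjunction coincides with the adjunction isomorphism, making the connecting maps vanish), or (b) relates the three Ext groups attached to $( \la, \mu )$, $( s_{i} \la, \mu )$, and $( \la, s_{i} \mu )$, all of strictly smaller total affine length, to which the inductive hypothesis applies. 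When the descent index is $i = 0$, use a diagram automorphism $\tau$ of the affine Dynkin diagram to convert it to some $i \in I$; such $\tau$ exists precisely for types $ADE_{6}E_{7}$, which is exactly the stated hypothesis.

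\emph{Base case.} When $\la, \mu \in P_{-}$, we have $D_{\mu} \cong W_{\mu}$ by Remark \ref{WDU-rel}, and Lemma \ref{D-res} supplies a projective resolution of $D_{\mu}$ by $\{ P_{\gamma} \langle m \rangle \}$ with $w \gamma \le \mu$ for some $w \in W$ or $\gamma = \mu$. Since $\mathrm{Hom}_{\mathfrak B} ( P_{\gamma}, D_{\mu}^{*} ) \cong ( D_{\mu}^{*} )_{\gamma}$, the Ext groups get computed as the cohomology of an explicit complex built from weight components. The canonical morphism $\mathbb U_{- \la} \to D_{\la}^{*}$ sending cyclic vector to dual cyclic vector (the weights agree at $-\la - \Lambda_{0}$, and the defining relations of $\mathbb U_{- \la}$ are satisfied by the dual cyclic vector of $D_{\la}^{*}$) supplies a nonzero element of $\mathrm{Hom}_{\mathfrak B} ( \mathbb U_{- \la}, D_{\la}^{*} )$. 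The one-dimensionality and the vanishing of all other Ext groups follow from the Euler characteristic identity
\[
\sum_{i \ge 0} (-1)^{i} \mathsf{gdim} \, \mathrm{ext}^{i}_{\mathfrak B} ( \mathbb U_{- \la}, D_{\mu}^{*} ) = \delta_{\la, \mu},
\]
which, via the characters produced in Corollary \ref{UE}, Corollary \ref{lambdasigma}, Theorem \ref{SI}, and formula \eqref{projectivecharacter}, matches the Macdonald orthogonality between $E_{\la} ( x, q, \infty )$ and $E_{\mu} ( x, q, 0 )$ under the Cherednik pairing.

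\emph{Main obstacle.} The hard part is upgrading the Euler characteristic identity into actual Ext vanishing in every positive degree. On the $\mathbb U$-side this uses the concentration of $\mathbb L \mathcal D_{i} ( \mathbb U_{- \la} )$ in degree zero provided by Proposition \ref{ind}; on the $D$-side it requires a careful tracking of the double complex arising from Lemma \ref{D-res} so that the associated spectral sequence degenerates at the expected page, leaving only the diagonal contribution. The ancillary subtlety is the $i = 0$ step, handled via the affine diagram automorphism; since $E_{8}$ admits no nontrivial such automorphism, the corresponding case is deferred to a conjecture.
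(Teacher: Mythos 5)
Your overall skeleton (Demazure functors $\mathcal D_{i}$, the adjunction of Proposition \ref{adj}, concentration of $\mathbb L \mathcal D_{i} ( \mathbb U_{-\la} )$ in degree zero from Proposition \ref{ind}, diagram automorphisms for $i=0$, which is where the $ADE_{6}E_{7}$ hypothesis enters) is the right toolkit, but the proof has a genuine gap exactly where you flag the ``main obstacle'': the base case is not proved. An Euler--Poincar\'e identity of the form $\sum_{i}(-1)^{i}\mathsf{gdim}\,\mathrm{ext}^{i}_{\fB}(\mathbb U_{-\la},D_{\mu}^{*})=\delta_{\la,\mu}$ can never by itself yield one-dimensionality of $\mathrm{hom}$ and vanishing of each $\mathrm{ext}^{i}$, $i>0$ (graded cancellations are not excluded), and the promised degeneration of the spectral sequence attached to the double complex from Lemma \ref{D-res} is asserted, not argued; there is no visible reason for it. Indeed the paper treats that character identity only in the Appendix, as a strictly weaker \emph{consequence} of the theorem, not as an input. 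Moreover your reduction runs in the wrong direction: descending on $\ell(u(\la))$ and aiming at ``$\la,\mu\in P_-$'' lands you at the diagonal antidominant case, which is precisely the case your base-case argument cannot close (and note that for $\la\in P_-$ the element $u(\la)=t_{\la}$ has \emph{maximal} length in its $W$-coset, so this is not a small base case in any natural sense).

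For comparison, the paper's route avoids this entirely. For $\la\neq\mu$ both antidominant it uses \emph{two} resolutions --- Lemma \ref{W-res} for $\mathbb U_{-\la}=\mathbb W_{-\la}$ as well as Lemma \ref{D-res} for $D_{\mu}$ --- to kill the ext groups when $\la\not\ge\mu$ and when $\la\not\le\mu$ respectively, so only $\la=\mu$ survives. The diagonal case is then handled by an induction on $\ell(u(\gamma))$ in the \emph{second} (Demazure) argument, using $D_{\gamma}^{*}\cong\mathcal D_{i}^{\dag}(D_{\mu}^{*})$ for $u(\mu)=s_{i}u(\gamma)<u(\gamma)$ together with the adjunction and Proposition \ref{ind} (including the cases $\mathcal D_{i}(\mathbb U_{-\la})=0$ and $\mathcal D_{i}(\mathbb U_{-\la})\cong\mathbb U_{-\la}$, plus the long exact sequence in the remaining case), terminating at $\gamma\in\Pi\Lambda_{0}$, i.e.\ essentially $\mu=0$. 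There $D_{0}\cong\bC_{0}$ and the BGG resolution of $\bC_{0}$ by (duals of) lowest-weight Verma modules of $\gh$ shows that a weight-$0$ cyclic generator occurs only in homological degree $0$, which is what gives $\mathrm{ext}^{k}(\mathbb U_{0},D_{0}^{*})\cong\bC$ for $k=0$ and $0$ otherwise. Your proposal contains neither this BGG-resolution computation at $\mu=0$ nor any substitute mechanism for the diagonal case, so as written it does not establish the theorem; if you want to salvage your outline, you should replace the Euler-characteristic step by an induction on the Demazure side down to $D_{0}\cong\bC_{0}$ (or supply an actual proof of the claimed degeneration, which I do not believe is available by the stated means).
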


\begin{proof}
If $\la - \mu \not\in Q$, then the extension trivially vanish.

If we have $i \in I^{a}$ so that $s_{i}u(\mu) < u(\mu)$ or $s_{i}u_{\mu} \in u(\mu) W$ and $u(\la) < s_{i}
u(\la) \not\in u (\la) W$, then we have
\begin{align*}
\mathrm{ext}^{\bullet}_{\fB} ( \mathbb U_{- \la}, D_{\mu}^{*} ) & \cong \mathrm{ext}^{\bullet}_{\fB} ( \mathbb
U_{- \la}, \mathcal D_{i}^{\dag} ( D_{\mu}^{*} ) )\\
& \cong \mathrm{ext}^{\bullet}_{\fB} ( \mathcal D_{i}( \mathbb U_{- \la} ), D_{\mu}^{*} )\\
& \cong \mathrm{ext}^{\bullet}_{\fB} ( \{0\}, D_{\mu}^{*} ) = \{0\}.
\end{align*}
This particularly implies
$$\mathrm{ext}^{\bullet}_{\fB} ( \mathbb U_{- \la}, D_{\mu}^{*} ) = \{ 0 \}$$
whenever there exists $i \in I$ so that $\bra \al^{\vee}_{i}, \la \ket > 0 \ge \bra \al^{\vee}_{i}, \mu \ket$ or
$\bra \vartheta^{\vee}, \la \ket \le 0 < \bra \vartheta ^{\vee}, \mu \ket$.

Assume that $\la$ and $\mu$ are both anti-dominant. Applying Lemma \ref{D-res}, we obtain an injective
resolution of $D_{\mu}^{*}$ as $U ( \I )$-module whose simple submodules are $\mathbb C_{\gamma}$, where $w
\gamma < \mu$ for some $w \in W$ or $\gamma = \mu$. Therefore, we conclude that
\begin{equation}
\mathrm{ext}^{\bullet}_{\fB} ( \mathbb U_{- \la}, D_{\mu}^{*} ) = \{ 0 \} \hskip 5mm \la \not\le
\mu.\label{antidom-vanish1}
\end{equation}

Assume that $\la$ and $\mu$ are both anti-dominant. By Remark \ref{WDU-rel}, we have $\mathbb U_{-\la} = \mathbb
W_{-\la}$. Applying Lemma \ref{W-res}, we have
\begin{align}\nonumber
\mathrm{ext}^{\bullet}_{\fB} ( \mathbb U_{- \la}, D_{\mu}^{*} ) & \cong \mathrm{ext}^{\bullet}_{\fB} ( \mathbb
W_{- \la}, \mathcal D_{w_0}^{\dagger} ( D_{w_0 \mu}^{*} ) ) &\\\nonumber
& \cong \mathrm{ext}^{\bullet}_{\fB} ( \mathbb W_{- w_0 \la}, D_{w_0 \mu}^{*} ) & \\
& = \{ 0 \} & \la \not\ge \mu.\label{antidom-vanish2}
\end{align}

We calculate the $\mathrm{ext}$-groups when $\la = \mu = 0$. We have $D_0 \cong \bC_0$. Then, we can identify
the projective resolution of $\bC_{0}$ with the BGG resolution of $D_{0}$ in terms of the lowest weight Verma
modules of $\gh$. In particular, the head of a projective resolution of $D_{0}$ in $\fB$ has weight $- W^{a}
\rho^{a} + \rho$, where $\rho^{a}$ is an arbitrary weight in $P^{a}$ so that $\bra \al^{\vee}_{i},
\rho^{a} \ket = 1$ for every $i \in I^{a}$. In addition, each $w \in W^{a}$ corresponds to a single projective
module in the BGG resolution. Therefore, the $\wh$-eigen cyclic generators of weight $0$ appears only
once at the zero-th term. This implies
$$\mathrm{ext}^{k}_{\mathfrak B} ( \mathbb U_{0}, D_{0}^{*} ) \cong \begin{cases} \bC & (k=0)\\\{0\} &
(\text{otherwise})\end{cases}.$$

Summarizing the above, we have
$$\mathrm{ext}^{k}_{\fB} ( \mathbb U_{- \la}, D_{0}^{*} ) = \begin{cases} \bC & (k = 0, \la = 0)\\ \{0\} &
(otherwise)\end{cases}.$$

We prove the main assertion by induction. Namely, we prove
$$\mathrm{ext}^{k}_{\fB} ( \mathbb U_{- \la}, D_{\gamma}^{*} ) = \begin{cases} \bC & (k = 0, \la =
\gamma)\\ \{0\} & (otherwise)\end{cases}$$
for $\gamma \in P$ by assuming the same assertion for every $\mu \in \Lambda$ so that $u(\mu) <
u(\gamma)$. The initial case $\gamma = \tau \Lambda_0$ for $\tau \in \Pi$ follows by the previous paragraph by
applying a diagram automorphism of $\gh$ arising from $\tau$ (if $\tau \neq 1$). Hence, we can also assume
$\gamma \not\in \Pi \Lambda_0$ in addition.

We have some $i \in I^{a}$ so that $u(\mu) = s_{i} u(\gamma)$ and $u(\mu) < u(\gamma)$. Then, we have
\begin{align*}
\mathrm{ext}^{\bullet}_{\fB} ( \mathbb U_{- \la}, D_{\gamma}^{*} ) & \cong \mathrm{ext}^{\bullet}_{\fB} (
\mathbb U_{- \la}, \mathcal D_{i}^{\dag} ( D_{\mu}^{*} ) )\\
& \cong \mathrm{ext}^{\bullet}_{\fB} ( \mathcal D_{i} ( \mathbb U_{- \la} ), D_{\mu}^{*} ).
\end{align*}
In view of Proposition \ref{ind}, we have $\mathcal D_{i} ( \mathbb U_{- \la} ) \cong \{ 0\}$ if $\bra
\al^{\vee}_{i}, \la + \Lambda_0 \ket > 0$, and $\mathcal D_{i} ( \mathbb U_{- \la} ) \cong \mathbb U_{- \la}$ if
$\bra \al^{\vee}_{i}, \la + \Lambda_0 \ket = 0$. In these cases, we have $\la \neq \gamma$, and the induction
hypothesis yields
$$\mathrm{ext}^{\bullet}_{\fB} ( \mathbb U_{- \la}, D_{\gamma}^{*} ) = \{ 0 \}.$$
In case $\bra \al^{\vee}_{i}, \la + \Lambda_0 \ket < 0$, then we have (a part of) the long exact sequence
$$\to \mathrm{ext}^{\bullet}_{\fB} ( \mathbb U_{- s_i \la}, D_{\mu}^{*} ) \to \mathrm{ext}^{\bullet}_{\fB} (
\mathcal D_{i} ( \mathbb U_{- \la} ), D_{\mu}^{*} ) \to \mathrm{ext}^{\bullet}_{\fB} ( \mathbb U_{- \la},
D_{\mu}^{*} ) \to \mathrm{ext}^{\bullet+1}_{\fB} ( \mathbb U_{- s_i  \la}, D_{\mu}^{*} ).$$
As a consequence, we have non-zero result if and only if $s_i \la = \mu$ or $\la = \mu$ by the induction
hypothesis. The latter case is prohibited by the comparison of $\bra \al^{\vee}_{i}, \la + \Lambda_0 \ket < 0$
and $s_i u(\gamma) < u(\gamma)$. Therefore, we conclude
$$\mathrm{ext}^{\bullet}_{\fB} ( \mathbb U_{- \la}, D_{\gamma}^{*} ) \cong \mathrm{ext}^{\bullet}_{\fB} (
\mathbb U_{- s_i \la}, D_{s_i \gamma}^{*} ) = \mathrm{ext}^{\bullet}_{\fB} ( \mathbb U_{- s_i \la}, D_{\mu}^{*}
).$$
Therefore, our induction hypothesis proceeds the induction as required.
\end{proof}

\appendix
\section{Numerical equality}

We discuss the equality of Theorem \ref{orth} on the level of characters.

Consider the Cherednik kernel:
\[
\kappa (x,q,t)=\frac{\prod_{\alpha \in \Delta_+^{a}}(1-e^{\alpha})^{{\rm mult}\, \al}}
{\prod_{\alpha \in \Delta_+^{a}}(1-te^{\alpha})^{{\rm mult}\, \al}}
\in \mathbb{C}[P](q,t) = \mathbb{C}[x_{1}^{\pm 1}, \ldots, x_{n}^{\pm 1}](q,t),
\]
through the identifications $e^{\om_{i}} = x_{i}$ for $1 \le i \le n$ and $q = e^{\delta}$.

We consider the Euler-Poincar\'e pairing
\begin{equation}\label{extpair}
[\fB'] \times [\fB_{0}] \ni (M, N) \mapsto ( M, N )_{EP} := \sum_{i=0}^{\infty}(-1)^i \mathsf{gdim} \, \mathrm{ext}^i( M,N^*)^{*},
\end{equation}
as the formal sum. This pairing lands in $\bC (\!( q )\!)$.

The Euler-Poincar\'e pairing satisfies the following properties:
\begin{enumerate}
\item It is $q$-linear;
\item For a short exact sequence:
\[0 \rightarrow M_1 \rightarrow M \rightarrow M_2 \rightarrow 0,\]
we have
\[(M,N)_{EP}=(M_1,N)_{EP}+(M_2,N)_{EP}\]
and the same equality holds for a short exact sequence in the second argument. Thus the Euler-Poincar\'e pairing depends only on the characters of $M$ and $N$;
\item We have the following equality:
$$(P_\Lambda,\bC_\Gamma)_{EP}=\delta_{\Lambda,-\Gamma} \hskip 5mm \Lambda, \Gamma \in P^{a};$$
\item If both $M$ and $N$ belong to $\fB_{0}$, then we have
$$(M,N)_{EP}=(N,M)_{EP}.$$
\end{enumerate}

The proofs of these properties are standard and is omitted (the last item requires \cite[\S 2.1--2.2]{G} as in the previous section).

The properties $(\mathrm{i})$, $(\mathrm{ii})$, $(\mathrm{iii})$ completely characterizes the Euler-Poincar\'e pairing. Now consider the specialization of the Cherednik inner product on $\bC[x^{\pm 1}](\!(q)\!)$:

\begin{equation}
(P(x,q),Q(x,q))_{C} := (P(x,q)Q(X,q)\kappa(x,q,0))_0,
\end{equation}
where the lower index $0$ denotes the constant term with respect to $q$ in the power series expansion of $h$.
Applying $(\mathrm{i})$, $(\mathrm{ii})$, $(\mathrm{iii})$ repeatedly, we obtain:
\begin{equation}
(M,N)_{EP}=({\rm ch} \, M,{\rm ch} \, N )_{C}.\label{EP=C}
\end{equation}

\begin{thm}
For each $\la,\mu \in P$ so that $\la \neq \mu$, we have
\begin{equation}
( \mathrm{ch} \, U_\mu, \mathrm{ch} \, D_\la )_{C} = 0 = ( U_{\mu}, D_\la )_{EP}.
\end{equation}
\end{thm}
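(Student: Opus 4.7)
The plan is to deduce the vanishing from Theorem \ref{orth} by resolving $U_\mu$ in terms of $\mathbb U_\mu$ and then invoking the compatibility \eqref{EP=C} between the Euler--Poincar\'e and Cherednik pairings. Since \eqref{EP=C} identifies the two pairings on the level of characters, the two equalities in the statement are equivalent, so it suffices to prove $(U_\mu, D_\la)_{EP} = 0$ for $\la \neq \mu$.

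The key input is the combination of Corollary \ref{U-free} and Corollary \ref{lambdasigma}: $\mathbb U_\mu$ is a free graded module over the polynomial algebra $A := A_{(\la_-)_\sigma}$ (with $\mu = \sigma \la_-$), and the quotient by the augmentation ideal $A_+$ recovers $\mathbb U_\mu \otimes_A \bC \cong U_\mu$. Since $A$ is a graded polynomial algebra, the trivial $A$-module $\bC$ admits a finite Koszul resolution by free $A$-modules; tensoring this resolution with $\mathbb U_\mu$ over $A$ produces a bounded resolution of $U_\mu$ in $\fB$ whose $k$-th term is a direct sum of grade-shifted copies of $\mathbb U_\mu$ of total graded multiplicity $\mathsf{gdim}\,\Lambda^k(A_+/A_+^2)$. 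Applying properties (i) and (ii) of the EP pairing term by term, together with the Koszul identity
$$\sum_k (-1)^k \mathsf{gdim}\,\Lambda^k(A_+/A_+^2) = (\mathsf{gdim}\,A)^{-1} = (q)_\mu,$$
we obtain
$$(U_\mu, D_\la)_{EP} = (q)_\mu \cdot (\mathbb U_\mu, D_\la)_{EP}.$$
Theorem \ref{orth} forces the right-hand side to vanish whenever $\la \neq \mu$ (after matching the $\wh$-weight conventions between Sections \ref{Umod} and \ref{cat}), and $(q)_\mu$ is a nonzero element of $\bZ[\![q]\!]$, so $(U_\mu, D_\la)_{EP} = 0$; the Cherednik-pairing statement then follows from \eqref{EP=C}.

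The main bookkeeping point — and essentially the only place where extra care is needed — is reconciling the affine weight conventions: Theorem \ref{orth} is phrased using the parameter-shifted module $\mathbb U_{-\la}$ with its $\wh$-cyclic vector placed at weight $-\la-\Lambda_0$ as fixed in Section \ref{cat}, whereas the appendix pairs $U_\mu$ and $D_\la$ directly. Once this parameter correspondence is unwound, the Koszul resolution and the invocation of Theorem \ref{orth} are routine.
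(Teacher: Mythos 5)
Your route is genuinely different from the paper's: the paper does not use Theorem \ref{orth} at all here. It proves the vanishing of the Cherednik pairing directly from the orthogonality of nonsymmetric Macdonald polynomials with respect to the kernel $\kappa$, specialized at $t=0$, combined with the character identifications $\ch \, D_\la=E_\la(x,q,0)$ (Theorem \ref{SI}) and $\ch \, U_\mu=w_0E_{w_0\mu}(x,q^{-1},\infty)$ (Corollary \ref{UE}), and only afterwards transports the statement to the Euler--Poincar\'e pairing via \eqref{EP=C} and property $(\mathrm{iv})$. Your Koszul step (freeness of $\mathbb U_\mu$ over $A_{(\la_-)_\sigma}$ from Corollaries \ref{U-free} and \ref{lambdasigma}, tensoring the Koszul resolution of $\bC$ to resolve $U_\mu$ by shifted copies of $\mathbb U_\mu$) is sound as an $\I$-module construction; a small slip is that the alternating sum equals $(\mathsf{gdim}\, A_{(\la_-)_\sigma})^{-1}=(q)_{(\la_-)_\sigma}$ rather than $(q)_\mu$, which is harmless since either is a nonzero polynomial in $q$.

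There are, however, two genuine problems. First, the point you defer as ``routine bookkeeping'' is where the actual content sits. Theorem \ref{orth} pairs $\mathbb U_{-\la}$ against $D_{\la}^{*}$: $\mathrm{ext}^{\bullet}_{\fB}(\mathbb U_{\nu},D_{\la}^{*})$ is nonzero precisely when $\nu=-\la$, and no choice of $\Lambda_0$-placement changes this, because both the ext-groups and the pairing \eqref{extpair} (see property $(\mathrm{iii})$) couple opposite $\wh$-weights. Pushed through your resolution, this yields $(U_\mu,D_\la)_{EP}=0$ for $\mu\neq-\la$, i.e.\ orthogonality supported on the anti-diagonal, not the diagonal indexing asserted in the statement. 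Matching the two requires the variable-inversion symmetry relating $E_\mu(x^{-1},q^{-1},\infty)$ to the $U$-characters (the Orr--Shimozono-type identity that the paper's own proof invokes when it substitutes $t=0$); this is exactly the delicate convention issue, and as written your argument proves a statement with a different index set than the one claimed. Second, your derivation inherits the hypotheses of Theorem \ref{orth}, so it applies only in types $ADE_{6}E_{7}$, whereas the paper's Macdonald-theoretic argument needs only the character formulas and in particular covers $E_8$, where Theorem \ref{orth} is only conjectural; it also makes the appendix logically dependent on the homological theorem, while the paper intends it as an independent numerical companion.
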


\begin{proof}
For $f (x,q,t) \in \bC [P](q,t)$, we set
$$\overline{f(x,q,t)}=f(x^{-1},q^{-1},t^{-1}), \hskip 3mm f^*(x,q,t)=f(x^{-1},q^{-1},t).$$

By the definition of the nonsymmetric Macdonald polynomials (see e.g. \cite{Ch1}), we have
\[\left(E_{\lambda}(x,q,t)\overline{E_{\mu}(x,q,t)}\kappa(x,q,t)\right)_0=0\]
for $\lambda \neq \mu$. In other words:
\[\left(E_{\lambda}(x,q,t)E^* _{\mu}(x,q,t^{-1})\kappa(x,q,t)\right)_0=0.\]

Substituting $t=0$, we obtain
\[\left(E_{\lambda}(x,q,0)( w_0 E_{w_0(\mu)}(x,q^{-1},\infty))\kappa(x,q,0)\right)_0=0.\]
In view of Corollary \ref{UE} and Theorem \ref{SI}, we conclude that the first equality. The second equality follows from \eqref{EP=C} and $(\mathrm{iv})$.
\end{proof}

\section*{Acknowledgments}
The research of E.F. and I.M. is supported by the grant RSF-DFG 16-41-01013.
The research of S.K is supported in part by JSPS Grant-in-Aid for Scientific Research (B) JP26287004.


\begin{thebibliography}{99}

\bibitem[BF1]{BF1}
A.Braverman, M.Finkelberg, {\it Weyl modules and $q$-Whittaker functions}, Mathematische Annalen, vol. 359 (1),
2014, pp 45--59.

\bibitem[BF2]{BF2}
A.Braverman, M.Finkelberg, {\it Twisted zastava and $q$-Whittaker functions},  Journal of LMS 2017, vol. 96, 309--325.

\bibitem[B]{B}
N.~Bourbaki, \emph{\'{E}l\'ements de math\'ematique. {F}asc. {XXXIV}. {G}roupes
et alg\`ebres de {L}ie. {C}hapitres {IV}, {V}, {VI}}, Actualit\'es
Scientifiques et Industrielles, No. 1337, Hermann, Paris, 1968.

\bibitem[BB]{BB}
A. Bj\"{o}rner and F. Brenti,
``Combinatorics of Coxeter groups'',
Graduate Texts in Mathematics Vol.~231,
Springer, New York, 2005.

\bibitem[BFP]{BFP} F.~ Brenti, S.~Fomin, A.~ Postnikov, {\it Mixed Bruhat operators
and Yang-Baxter equations for Weyl groups}, Internat. Math. Res. Notices 1999, no. 8, 419--441.



\bibitem[CI]{CI}
V.Chari, B.Ion, {\it BGG reciprocity for current algebras}, Compositio Mathematica 151 (2015), pp. 1265--1287.



\bibitem [Ch1] {Ch1}
{I.~Cherednik},
{\it Nonsymmetric Macdonald polynomials},
IMRN {10} (1995), 483--515.

\bibitem [Ch2] {Ch2}
I.~Cherednik,
{\it Double affine Hecke algebras},
London Mathematical Society Lecture
Note Series, {319}, Cambridge University Press, Cambridge, 2006.



\bibitem[CFK]{CFK}
V.~Chari, G.~Fourier, and T.~Khandai.
A categorical approach to Weyl modules.
Transform. Groups, 15(3):517--549, 2010.

\bibitem[CG]{CG}
V. Chari and J. Greenstein,
Current algebras, highest weight categories and quivers,
{\it Adv. Math.} {\bf 216} (2007), 811--840.

\bibitem [CO1] {CO1}
I.~Cherednik, {D.~Orr},
{\it Nonsymmetric difference Whittaker functions}, Math. Z. 279 (2015), no. 3--4, 879--938.

\bibitem [CO2] {CO2}
I.~Cherednik, {D.~Orr},
{\it One-dimensional nil-DAHA and Whittaker functions},
Transformation Groups { 18}:1 (2013), 23--59.




\bibitem[FL]{FL} G.Fourier, P.Littelmann,
{\it Weyl modules, Demazure modules, KR-modules, crystals, fusion products and limit constructions},
Advances in Mathematics 211 (2007), no. 2, 566--593.

\bibitem[FiMi]{FiMi}
M.Finkelberg, I.Mirkovii\'c, {\it Semi-infinite flags I. Case of global curve $\bP^1$}.
In Differential topology, infinite-dimensional Lie algebras, and applications, volume 194 of
Amer. Math. Soc. Transl. Ser. 2, pages 81--112. Amer. Math. Soc., Providence, RI, 1999.



\bibitem[FeMa1]{FeMa3}
E.Feigin, I.Makedonskyi, {\it Generalized Weyl modules, alcove paths and Macdonald polynomials},
Selecta Mathematica, 2017,  Volume 23, Issue 4, pp. 2863--2897.

\bibitem[FeMa2]{FeMa4}
E.Feigin, I.Makedonskyi, {\it Generalized Weyl modules for twisted current algebras}, Theor. Math. Phys. (2017) 192: 1184.


\bibitem[FMO]{FMO}
E.Feigin, I.Makedonskyi, D.Orr, {\it Generalized Weyl modules and nonsymmetric $q$-Whittaker functions},
arXiv:1605.01560.

\bibitem[G]{G} A. Grothendieck, {\it Sur quelques points d'alg\`ebre homologique}, T\^ohoku Math. J. (2), 9
    (1957), 119--221.






\bibitem[I]{I} {B.~Ion},
{\em Nonsymmetric Macdonald polynomials and Demazure characters},
Duke Mathematical Journal {116}:2 (2003), 299--318.

\bibitem[J]{J} A.~Joseph, {\it On the Demazure character formula}, Annales Scientifique de l'E.N.S.,
1985, 389--419.

\bibitem[Kat]{Kat}
S.~Kato, {\it Demazure character formula for semi-infinite flag manifolds}, arxiv:1605.049532.

\bibitem[Kac]{Kac}
V.~Kac, Infinite dimensional Lie algebras, 3rd ed.,
Cambridge University Press, Cambridge, 1990.


\bibitem[KNS]{KNS}
S.~Kato, S.~Naito, D.~Sagaki, {\it Pieri-Chevalley type formula for equivariant $K$-theory of semi-infinite flag manifolds}, arXiv:1702.02408
\bibitem[Kum]{Kum}
S.Kumar, {\it Kac-Moody groups, their flag varieties and representation theory},
Progress in Mathematics, 204. Birkh{\" a}user Boston, Inc., Boston, MA, 2002.






\bibitem[LNSSS1]{LNSSS1} C. Lenart, S. Naito, D. Sagaki, A. Schilling, and M. Shimozono, Quantum
    Lakshmibai-Seshadri paths and root operators, Adv. Studies in Pure Math. 76 (2016), 267--294.

\bibitem[LNSSS2]{LNSSS2} C. Lenart, S. Naito, D. Sagaki, A. Schilling, and M. Shimozono,
{\it A uniform model for Kirillov--Reshetikhin crystals I: Lifting the parabolic quantum Bruhat graph},
Int. Math. Res. Not. 2015 (2015), 1848--1901.

\bibitem[LNSSS3]{LNSSS3} C. Lenart, S. Naito, D. Sagaki, A. Schilling, and M. Shimozono,
{\it A uniform model for Kirillov--Reshetikhin
crystals II: Alcove model, path model, and $P = X$}, Int. Math.Res. Not. 2017 (14), 4259--4319. 



\bibitem [Lus] {Lus}
{G.~Lusztig}, {\it Hecke algebras and Jantzen's generic decomposition patterns}, Advances in Mathematics 37, no.
2 (1980), 121--164.




\bibitem[M3]{M3}
I.~G.~Macdonald.
Affine Hecke algebras and orthogonal polynomials.
Cambridge Tracts in Mathematics, 157. Cambridge University Press, Cambridge, 2003.

\bibitem[NNS]{NNS}
S. Naito, F. Nomoto, and D. Sagaki,
An explicit formula for the specialization of nonsymmetric
Macdonald polynomials at $t = \infty$, arXiv:1511.07005, to appear in Trans. Amer. Math. Soc.

\bibitem[NS]{NS}
S. Naito and D. Sagaki, Demazure submodules of level-zero
extremal weight modules and specializations of Macdonald polynomials, Math. Zeit. 283 (2016), no.3, 937--978.

\bibitem[No]{No}
F. Nomoto, {\it Generalized Weyl modules and Demazure submodules of level-zero extremal weight modules},
arXiv:1701.08377.

\bibitem[N]{N}
K.Naoi, {\it Weyl modules, Demazure modules and finite crystals for non-simply laced type},
Adv. Math. 229 (2012), no. 2, 875--934.



\bibitem[O]{O}
E.Opdam {\it Harmonic analysis for certain representations of graded Hecke algebras}, Acta Math. 175 (1995), no.

\bibitem[OS]{OS}
D.Orr, M.Shimozono, {\it Specializations of nonsymmetric Macdonald-Koornwinder polynomials},
to appear in J. Algebraic Combinatorics.

\bibitem[P]{P}
Papi, Paolo {\it A characterization of a special ordering in a root system}, Proc. Amer. Math. Soc. 120 (1994), no. 3, 661--665.

\bibitem[RY]{RY} {A. Ram, M. Yip},
{A combinatorial formula for Macdonald polynomials}, Advances in Mathematics, vol. 226 (1), 2011, pp. 309--331.

\bibitem[Sage]{Sage} SageMath, {Nonsymmetric Macdonald polynomials package} by A.~Schilling and N.~M.~Thiery
    (2013),
http://doc.sagemath.org/html/en/reference/combinat/sage/\-combinat/\-root\_system/\-non\_symmetric\_macdonald\_polynomials.html.

\bibitem[S]{S} {Y.~Sanderson},
{\em On the Connection Between Macdonald Polynomials and Demazure Characters},
J. of Algebraic Combinatorics, {11} (2000), 269--275.

\end{thebibliography}
\end{document}